\documentclass{article}

\usepackage{amsmath}        
\usepackage{amssymb}        
\usepackage{mathtools}      
\usepackage{hyperref}       
\usepackage{geometry}       
\usepackage{stmaryrd}
\usepackage{todonotes}
\usepackage{amsthm}         
\usepackage{xcolor}         

\theoremstyle{definition}
\newtheorem{theorem}{Theorem}[section]
\newtheorem*{theorem*}{Theorem}
\newtheorem{definition}[theorem]{Definition}
\newtheorem*{definition*}{Definition}
\newtheorem{proposition}[theorem]{Proposition}

\newtheorem*{proposition*}{Proposition}
\newtheorem{lemma}[theorem]{Lemma}
\newtheorem*{lemma*}{Lemma}
\newtheorem{corollary}[theorem]{Corollary}
\newtheorem*{corollary*}{Corollary}

\newtheorem{example}[theorem]{Example}
\newtheorem*{example*}{Example}
\newtheorem{remark}[theorem]{Remark}
\newtheorem*{remark*}{Remark}

\newtheorem*{notation*}{Notation}

\hypersetup{
    colorlinks=true,
    linkcolor=blue,
    filecolor=magenta,
    urlcolor=cyan,
    citecolor=red, 
    pdftitle={Regularity and Convergence Properties of Finite Free Convolutions},
    pdfauthor={Katsunori Fujie},
}

\newcommand{\calP}{\mathcal{P}}
\DeclareMathOperator{\diff}{\partial} 
\newcommand{\cdf}{\mathcal{F}}
\DeclareMathOperator{\Dil}{Dil}
\DeclareMathOperator{\Shi}{Shi}
\newcommand{\calM}{\mathcal{M}}
\newcommand{\meas}[1]{\mu\llbracket #1 \rrbracket} 

\newcommand{\weakto}{\xrightarrow{w}} 

\newcommand{\coef}{\tilde{e}}
\newcommand{\N}{\mathbb{N}}

\newcommand{\R}{\mathbb{R}}
\newcommand{\C}{\mathbb{C}}
\newcommand{\longimplies}{\Longrightarrow}
\renewcommand{\epsilon}{\varepsilon}

\numberwithin{equation}{section} 

\begin{document}

\title{Regularity and Convergence Properties of Finite Free Convolutions}
\author{Katsunori Fujie}
\date{\today} 

\maketitle

\begin{abstract}
Finite free convolutions, $\boxplus_d$ and $\boxtimes_d$, are binary operations on polynomials of degree $d$ that are central to finite free probability, a developing field at the intersection of free probability and the geometry of polynomials.
Motivated by established regularities in free probability, this paper investigates analogous regularities for finite free convolutions.
Key findings include triangle inequalities for these convolutions and necessary and sufficient conditions regarding atoms of probability measures.
Applications of these results include proving the weak convergence of $\boxplus_d$ and $\boxtimes_d$ to their infinite counterparts $\boxplus$ and $\boxtimes$ as $d \to \infty$, without compactness assumptions. Furthermore, this weak convergence is strengthened to convergence in Kolmogorov distance.
\end{abstract}

\tableofcontents

\section{Introduction}

\subsection{Background and motivation for the research}

Finite free probability is a branch that has been rapidly developing since the celebrated foundational work by Marcus, Spielman, and Srivastava \cite{MSS22} (see also \cite{Marcus}).
Their work was originally motivated by longstanding problems in operator algebra and graph theory; they provided an affirmative answer to the Kadison--Singer problem and constructed non-trivial Ramanujan graphs \cite{MSS1, MSS2}.
They primarily utilized techniques known from geometry of polynomials, such as real-rootedness and interlacing properties (see Definition \ref{def:interlacing} and the explanation below). Their main contribution was to highlight the connections among free probability, random matrix theory, and polynomial convolutions defined almost 100 years ago by Szeg\H{o} and Walsh, which are termed finite free additive and multiplicative convolutions, respectively, in this paper.

For $d \in \N$, let $\calP_d(\C)$ denote the set of monic polynomials of degree $d$.
We also use $\calP_d(\R)$ to denote the subset of $\calP_d(\C)$ having only real roots.
For $p \in \calP_d(\C)$, we express $p$ as
\begin{equation*}
p(x) = \sum_{k=0}^d x^{d-k} (-1)^k \binom{d}{k} \coef_k^{(d)} (p).
\end{equation*}
We adopt this seemingly unfamiliar notation as we believe it is optimal for stating the definitions and various results.
Given polynomials $p, q \in \calP_d(\C)$, \textit{the finite free additive and multiplicative convolutions} are respectively defined as follows:
\begin{align}
p \boxplus_d q (x) &= \sum_{k=0}^d x^{d-k} (-1)^k \binom{d}{k} \sum_{i=0}^k \binom{k}{i} \coef_i^{(d)} (p) \coef_{k-i}^{(d)} (q); \label{eq:finite-free-additive}\\
p \boxtimes_d q (x) &= \sum_{k=0}^d x^{d-k} (-1)^k \binom{d}{k} \coef_k^{(d)} (p) \coef_k^{(d)} (q). \label{eq:finite-free-multiplicative}
\end{align}

The main interest of this paper lies in these two operations. The first, \eqref{eq:finite-free-additive}, was previously defined and studied by Walsh \cite{Walsh}, while the second, \eqref{eq:finite-free-multiplicative}, was essentially defined by Szegő \cite{Sz22} and is known as the Schur–Szegő composition (see also \cite[Chapter 4]{Mar66}). For the reader's convenience, their basic properties are summarized in Section \ref{sec:finitefree}. The foundational work of Marcus, Spielman, and Srivastava establishes the following connection \cite[Theorems 1.2 and 1.5]{MSS22}: let $A$ and $B$ be $d \times d$ normal matrices and $U$ be a $d \times d$ Haar-distributed unitary matrix (i.e., a random unitary matrix uniformly distributed on the unitary group of degree $d$). If $p(x) = \det(x I - A)$ and $q(x) = \det(x I - B)$, then
\[
p \boxplus_d q (x) = E_U [\det(x I - A - U B U^*)] \qquad \text{and} \qquad p \boxtimes_d q (x) = E_U [\det(x I - A U B U^* A)],
\]
which indicates their connection to free probability.

Free probability, pioneered by Voiculescu in the 1980s, provides a powerful framework for understanding non-commutative random variables, particularly in the context of operator algebras and random matrix theory \cite{V85,V86,V87,Voi91} (see also the standard textbooks \cite{MS, NS, VDN92} and references therein). At its heart lies the concept of ``freeness,'' an analogue of classical independence for non-commutative variables, and many analogues to concepts in classical probability have been established.
For instance, given probability measures $\mu, \nu$ on $\R$, \textit{the free additive convolution} $\mu \boxplus \nu$ is defined as the distribution of $X + Y$, where $X$ and $Y$ are freely independent non-commutative random variables distributed as $\mu$ and $\nu$, respectively. Additionally, if the support of $\nu$ is contained in $[0,\infty)$ (equivalently, $Y \ge 0$), \textit{the free multiplicative convolution} $\mu \boxtimes \nu$ is defined as the distribution of $\sqrt{Y}X\sqrt{Y}$.
It then becomes natural to define and study, for example, the law of large numbers, central limit theorems, free infinite divisibility, and the Lévy--Khintchine representation, among others.

This theory has achieved remarkable success in describing the asymptotic behavior of large $N\times N$ random matrices as $N$ approaches infinity.
For instance, the limiting eigenvalue distribution of the sum of two independent random matrices, whose distributions are invariant under unitary action, is described by the free additive convolution of their individual limiting distributions.
This phenomenon is well known as the asymptotic freeness of random matrices.
The relation between the operations $\boxplus_d$ and $\boxtimes_d$ and random matrices, discovered by Marcus, Spielman, and Srivastava, suggests a fundamental connection to the free convolutions $\boxplus$ and $\boxtimes$ as $d \to \infty$.

Indeed, many free probabilistic analogues hold in finite free probability, including the law of large numbers and the central limit theorem \cite{Marcus}, finite free cumulants \cite{AP}, and finite $S$-transforms \cite{AFPU}, among others (see Section \ref{sec:finitefree}).
The most basic of these are the approximations of $\boxplus_d$ and $\boxtimes_d$ to $\boxplus$ and $\boxtimes$ as $d \to \infty$.
To explain this briefly: given $p(x) = \prod_{i=1}^d (x-\lambda_i) \in \calP_d(\R)$, the empirical root distribution is defined as $\meas{p} = \frac{1}{d} \sum_{i=1}^d \delta_{\lambda_i}$.
For sequences $p_d, q_d \in \calP_d(\R)$ such that $\meas{p_d} \weakto \mu$ and $\meas{q_d} \weakto \nu$, it holds that $\meas{p_d \boxplus_d q_d} \weakto \mu \boxplus \nu$; a similar convergence holds for $\boxtimes_d$ to $\boxtimes$ under the additional condition $q_d \in \calP_d(\R_{\ge 0})$.
To date, however, proofs have essentially relied on moment convergence methods, which necessitate conditions on the supports of the measures.
One result of this paper (Theorem \ref{thm:main3}) is to demonstrate that this condition on the supports can be removed.

Generally, studying the properties of free convolutions is difficult.
Historically, definitions were initially given only for measures with compact support.
More general definitions were provided in the 1990s \cite{BV93, Maa92}.
However, various regularity properties are now known due to the efforts of many researchers in this area. These include: the monotone property for order \cite[Propositions 4.15 and 4.16]{BV93} and distances \cite[Proposition 4.13 and 4.14]{BV93}; properties of atoms and absolute continuity with respect to Lebesgue measure for $\boxplus$ \cite[Theorem 7.4]{BV98}, \cite[Theorem 4.1]{Bel08}, and for $\boxtimes$ \cite[Corollary 6.6]{ACSY}, \cite[Theorem 3.1]{Ji}, \cite[Theorem 1.9]{AHK}; among others.
These properties are summarized in Section \ref{sec:FreeProbability}.

In this paper, motivated by known regularities in free probability, we aim to contribute to the research on the regularity of finite free convolutions.
Usually, investigating the properties of free convolutions requires analytical approaches involving tools such as the $R$-transform, $S$-transform, and their associated free subordination functions. However, in finite free probability, with the exception of the finite $S$-transform for $\calP_d(\R_{\ge 0})$ defined in \cite{AFPU}, many challenges remain in developing analytical approaches (see \cite{Campbell, JKM, Marcus, MSS22}).
Therefore, this paper employs a more elementary approach, similar to that taken in \cite{BV93}, to establish regularity properties of finite free convolutions.
Our main tools are the partial order defined on $\mu \in \calM(\R)$ and approximations using cut-up $\mu|^a$ and cut-down $\mu|_a$ measures.

\subsection{Main results}

To save space, this section primarily presents results for the finite free additive convolution $\boxplus_d$; similar results hold for the finite free multiplicative convolution $\boxtimes_d$ (see Propositions \ref{prop:boxtimes-atom}, \ref{prop:atoms-finite-free-multiplicative}, and \ref{prop:simple-finite-free-multiplicative}; Theorems \ref{thm:monotonicity-finite-free-multiplicative}, \ref{thm:convergence_multiplicative}, and \ref{thm:convergence_mixed}).

The first result concerns atoms in finite free convolutions.
Let $p \in \calP_d(\C)$ (resp. $q \in \calP_d(\C)$) be a monic polynomial of degree $d$ having a root $\alpha$ with multiplicity $m^p_\alpha$ (resp. having a root $\beta$ with multiplicity $m^q_\beta$).
If $m^p_\alpha + m^q_\beta - d > 0$, then $p \boxplus_d q$ has a trivial root $\alpha+\beta$ with multiplicity $m^p_\alpha + m^q_\beta - d$.
We call such a triplet $(\alpha, \beta, \alpha + \beta)$ an atom triplet of $(p, q, p \boxplus_d q)$.
The other roots of $p \boxplus_d q$ are said to be non-trivial.
Research on the finite free multiplicative convolution (Schur--Szeg\H{o} composition) was previously conducted by Kostov and Shapiro in 2006 \cite{KS}.
Inspired by their work, we obtained the following results.
In the statements that follow, for $p \in \calP_d(\R)$, we use the abbreviation $\cdf_p$ to denote the cumulative distribution function of the empirical root distribution $\meas{p}$; that is, $\cdf_p = \cdf_{\meas{p}}$.
\begin{theorem} \label{thm:main1}
Let $p, q \in \calP_d(\R)$ be monic polynomials of degree $d$ having only real roots.
\begin{enumerate}
\item Every non-trivial root of $p \boxplus_d q$ is simple.
\item For an atom triplet $(\alpha, \beta, \alpha+\beta)$ of $(p, q, p \boxplus_d q)$, we have
\[
\meas{p \boxplus_d q}(\{\alpha+\beta\}) = \meas{p}(\{\alpha\}) + \meas{q}(\{\beta\}) -1
\]
and also
\[
\cdf_{p \boxplus_d q}(\alpha+\beta) = \cdf_{p}(\alpha) + \cdf_{q}(\beta) - 1.
\]

\end{enumerate}
\end{theorem}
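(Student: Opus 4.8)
The plan is to treat the two assertions separately and, throughout, to use the translation equivariance of $\boxplus_d$ (the basic property $\Shi_s p \boxplus_d \Shi_t q = \Shi_{s+t}(p\boxplus_d q)$) to normalize an atom triplet to $\alpha=\beta=0$. Writing $a:=m^p_0$ and $b:=m^q_0$, the hypothesis of an atom triplet reads $a+b>d$, and the two displayed formulas of (2) reduce to the single statement that $0$ is a root of $p\boxplus_d q$ of multiplicity \emph{exactly} $a+b-d$, together with the counting identity $\#\{\theta<0\}=c+c'$, where $\theta_1\le\cdots\le\theta_d$ are the roots of $p\boxplus_d q$, $c:=\#\{\lambda<0\}$ and $c':=\#\{\mu<0\}$, with $\lambda,\mu$ ranging over the roots of $p,q$.

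For the multiplicity identity in (2) I would argue purely at the level of coefficients, needing no real-rootedness. The condition $m^p_0=a$ forces $\coef^{(d)}_i(p)=0$ for $i>d-a$ while $\coef^{(d)}_{d-a}(p)\neq 0$, and similarly for $q$. Substituting this into \eqref{eq:finite-free-additive}, the coefficient of $x^{d-k}$ in $p\boxplus_d q$ is a sum of terms $\binom{k}{i}\coef^{(d)}_i(p)\coef^{(d)}_{k-i}(q)$ that can be nonzero only for $k-d+b\le i\le d-a$; hence it vanishes whenever $d-k<a+b-d$, whereas for $k=2d-a-b$ exactly the single term $i=d-a$ survives and is nonzero. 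Thus $x^{a+b-d}$ divides $p\boxplus_d q$ but $x^{a+b-d+1}$ does not. A short comparison of total multiplicities (any two decompositions $\theta=\alpha_1+\beta_1=\alpha_2+\beta_2$ with both $m^p_{\alpha_i}+m^q_{\beta_i}>d$ would overload the degree) shows that a trivial root comes from a unique atom triplet, so the formula is unambiguous.

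For the counting identity I would invoke real-rootedness of $p\boxplus_d q$ together with the finite-free Weyl inequalities $\theta_{i+j-1}\ge\lambda_i+\mu_j$ (valid for $i+j-1\le d$). Applied at the smallest zero roots $\lambda_{c+1}=\mu_{c'+1}=0$, and using $c+c'\le 2d-a-b<d$, this gives $\theta_{c+c'+1}\ge 0$, i.e.\ at most $c+c'$ roots are negative. Running the same bound for $\check p,\check q$ (roots negated, under which $\boxplus_d$ is equivariant) shows at most $\#\{\lambda>0\}+\#\{\mu>0\}$ roots are positive. Since the $d$ roots split into negative, zero (exactly $a+b-d$ of them) and positive, and the two upper bounds sum to $d$, both must be equalities; in particular $\#\{\theta<0\}=c+c'$, which yields $\cdf_{p\boxplus_d q}(0)=(c+c'+a+b-d)/d=\cdf_p(0)+\cdf_q(0)-1$, hence the CDF formula after undoing the shift.

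For (1) my engine would be the intertwining of $\boxplus_d$ with the normalized derivative $\partial p:=\tfrac1d p'$: writing out coefficients one checks $\coef^{(d-1)}_k(\partial p)=\coef^{(d)}_k(p)$, whence $\partial(p\boxplus_d q)=(\partial p)\boxplus_{d-1}(\partial q)$. I would then induct on $d$. If $\theta$ is a non-trivial root of multiplicity $M\ge 2$, it is a root of $(\partial p)\boxplus_{d-1}(\partial q)$ of multiplicity $M-1$. When $M\ge 3$ the inductive hypothesis forces $\theta$ to be a \emph{trivial} root there, with multiplicities $m^{\partial p}_{\alpha'},m^{\partial q}_{\beta'}$ summing to $M-1+d-1$; as each is $\le d-1$, each is $\ge 2$, and a multiplicity-$\ge2$ root of the derivative of a real-rooted polynomial occurs only at a root of the polynomial with multiplicity one larger. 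Lifting, $m^p_{\alpha'}+m^q_{\beta'}=M+d>d$, so $\theta$ is trivial for $(p,q)$ — a contradiction. This reduces (1) to the case $M=2$, and here lies the main obstacle: ruling out non-trivial double roots. The derivative relation gives no leverage ($\theta$ is then only a simple root of the derivative convolution), and the tempting idea of factoring the convolution operator $\sum_j \tfrac{(-1)^j}{j!}\coef^{(d)}_j(q)\,\partial^j$ into real first-order interlacing factors fails, since its symbol need not be real-rooted. I expect instead to settle it through the order-monotonicity of $\boxplus_d$ combined with the cut-up/cut-down approximations $p|_a\preceq p\preceq p|^a$, in the spirit of Kostov--Shapiro: a non-trivial double root would have to persist under one-sided perturbations that create no new atoms, while a strict form of the monotonicity moves the two coincident roots at different rates and separates them. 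Making this separation quantitative is, I anticipate, the technical heart of the proof.
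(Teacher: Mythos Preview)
Your argument for the first formula in (2) (the multiplicity identity) is essentially the paper's: a direct coefficient computation after normalizing to $\alpha=\beta=0$, yielding Proposition~\ref{prop:boxplus_atom}. For the CDF identity you invoke finite-free Weyl inequalities $\theta_{i+j-1}\ge \lambda_i+\mu_j$; the paper instead argues as in Proposition~\ref{prop:cdf_at_atom_additive}, using the cut-down polynomials $p|_\alpha, q|_\beta$ together with monotonicity (Proposition~\ref{prop:OrderPreserving}) to get one inequality, reflection $p\mapsto\widehat p$ to get the other, and then the multiplicity identity to force equality. These two routes are really the same idea in different packaging (the Weyl bound at the relevant indices is exactly what the cut-down comparison produces), so your Part~(2) is fine.

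The genuine gap is in Part~(1), precisely at the $M=2$ case you flag. Your reduction of $M\ge 3$ to $M=2$ via the derivative intertwining $\partial(p\boxplus_d q)=(\partial p)\boxplus_{d-1}(\partial q)$ and induction on $d$ is correct and mirrors the paper's first reduction step. But for $M=2$ your proposed mechanism --- monotonicity plus cut-up/cut-down, hoping that ``a strict form of the monotonicity moves the two coincident roots at different rates'' --- is not what works, and I do not see how to make it work: the cut operations \emph{create} atoms rather than remove them, and order-monotonicity is a CDF comparison that gives no separation of coincident roots under perturbations that leave the CDF unchanged on the relevant side.

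The paper's argument (Appendix~\ref{app:proof_prop_boxplus_simple}) is an \emph{instability} argument at the level of coefficients rather than root positions. The key observation is that a real-rooted polynomial with a double root leaves $\calP_d(\R)$ under a one-sided constant perturbation $r\mapsto r\pm\epsilon$; since adding a constant to $p$ shifts $p\boxplus_d q$ by the same constant (the $k=d$ term in \eqref{eq:finite-free-additive}), this already yields a contradiction when all roots of $p$ are simple. When $p$ has a multiple root $\alpha$, one perturbs instead by $p\mapsto p - \delta\, p/(x-\alpha)$ (splitting off one copy of the root $\alpha$ while staying real-rooted) and analyzes the resulting perturbation of $p\boxplus_d q$ through the lower-degree convolution $U_\alpha = \big(p/(x-\alpha)\big)\boxplus_d q \in \calP_{d-1}(\R)$. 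A case analysis on the order of vanishing of $U_\alpha$ at $0$, combined with Lemma~\ref{lem:appendix_basis} expressing $(x-\alpha)^d$ and related test polynomials in the basis $\{p,\, p/(x-\alpha_i)\}$, eventually forces an atom triplet to exist, contradicting non-triviality. This coefficient-perturbation idea --- not monotonicity of roots --- is the missing ingredient in your sketch.
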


Our second main result establishes a monotone property for distances under finite free convolutions.
We consider two distances defined on probability measures $\mu, \nu$ on $\R$: the Kolmogorov distance $d_K(\mu, \nu)$ and the Lévy distance $d_L(\mu, \nu)$, which are defined in the usual manner (see Definition \ref{def:Distance.on.measures}).
For polynomials $p, q \in \calP_d(\R)$, we adopt the abbreviations $d_K(p,q) := d_K(\meas{p}, \meas{q})$ and $d_L(p,q) := d_L(\meas{p}, \meas{q})$.

\begin{theorem} \label{thm:main2}
Let $p, q, r\in \calP_d (\R)$ be monic polynomials of degree $d$ having only real roots.
\begin{enumerate}
\item $d_K (p \boxplus_d r, q \boxplus_d r) \le d_K (p, q)$.
\item $d_L (p \boxplus_d r, q \boxplus_d r) \le d_L (p, q)$.
\end{enumerate}
\end{theorem}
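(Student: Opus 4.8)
The plan is to reduce both inequalities to a single one-sided statement about cumulative distribution functions and then exploit two structural features of $\boxplus_d$: its monotonicity with respect to the root-wise order (the additive counterpart of Theorem \ref{thm:monotonicity-finite-free-multiplicative}) and its preservation of interlacing. Concretely, I would first isolate the following \emph{one-sided lemma}: for $a,b,r\in\calP_d(\R)$ and an integer $0\le k\le d$, if $\cdf_a(x)\le\cdf_b(x)+k/d$ for every $x$, then $\cdf_{a\boxplus_d r}(x)\le\cdf_{b\boxplus_d r}(x)+k/d$ for every $x$. Granting this, part (1) is immediate: writing $k=d\,d_K(p,q)$, which is an integer since all jumps of $\cdf_p$ and $\cdf_q$ are multiples of $1/d$, the definition of $d_K$ gives both $\cdf_p\le\cdf_q+k/d$ and $\cdf_q\le\cdf_p+k/d$, and applying the lemma to each ordering yields $d_K(p\boxplus_d r,q\boxplus_d r)\le k/d$. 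For part (2) I would combine the lemma with the shift-equivariance $(\Shi_c p)\boxplus_d r=\Shi_c(p\boxplus_d r)$: if $d_L(p,q)\le\epsilon$, then $\cdf_q(x)\le\cdf_p(x+\epsilon)+\epsilon=\cdf_{\Shi_{-\epsilon}p}(x)+\lfloor d\epsilon\rfloor/d$, so the lemma applied to $\Shi_{-\epsilon}p$ and $q$ (and symmetrically to $\Shi_{\epsilon}p$) reproduces the two Lévy inequalities for $p\boxplus_d r$ and $q\boxplus_d r$, giving $d_L(p\boxplus_d r,q\boxplus_d r)\le\epsilon$ and hence the claim after taking the infimum over $\epsilon$.

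To prove the lemma I would translate its hypothesis into sorted roots. Writing $\alpha_1\le\cdots\le\alpha_d$ and $\beta_1\le\cdots\le\beta_d$ for the roots of $a$ and $b$, the inequality $\cdf_a\le\cdf_b+k/d$ is equivalent to $\beta_m\le\alpha_{m+k}$ for all $m$ (with the convention $\alpha_j=+\infty$ for $j>d$). Let $a_{[k]}$ denote the polynomial whose roots are $\alpha_{k+1}\le\cdots\le\alpha_d$ together with $k$ roots placed at a common large value; its $m$-th smallest root is $\alpha_{m+k}$, so the hypothesis reads exactly as $b\le a_{[k]}$ in the root-wise order. The monotonicity theorem for $\boxplus_d$ then gives $\cdf_{b\boxplus_d r}\ge\cdf_{a_{[k]}\boxplus_d r}$, so it suffices to prove the key estimate $\cdf_{a_{[k]}\boxplus_d r}\ge\cdf_{a\boxplus_d r}-k/d$; equivalently, since $a_{[k]}$ dominates $a$ root-wise, $d_K(a\boxplus_d r,a_{[k]}\boxplus_d r)\le k/d$.

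For the key estimate I would realize $a_{[k]}$ through a chain $a=a^{(0)},a^{(1)},\dots,a^{(k)}=a_{[k]}$, where $a^{(j)}$ is obtained from $a$ by raising its $j$ smallest roots to a common large value $M\ge\alpha_d$. A direct check shows that consecutive polynomials $a^{(j)}$ and $a^{(j+1)}$ interlace, since passing from one to the next merely deletes the smallest remaining finite root and inserts a root at $M$ above all others. Because $p\mapsto p\boxplus_d r$ is linear in the coefficients of $p$ and preserves real-rootedness, it preserves interlacing; hence $a^{(j)}\boxplus_d r$ and $a^{(j+1)}\boxplus_d r$ interlace, and the cumulative distribution functions of two interlacing degree-$d$ polynomials differ by at most $1/d$. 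The triangle inequality for $d_K$ then gives $d_K(a\boxplus_d r,a^{(k)}\boxplus_d r)\le k/d$ for every large $M$, and letting $M\to\infty$ recovers the key estimate for $a_{[k]}$.

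The main obstacle I anticipate is precisely this passage to the limit $M\to\infty$: one must show that the finite roots of $a^{(k)}\boxplus_d r$ converge while exactly $k$ roots escape to $+\infty$, so that $\cdf_{a^{(k)}\boxplus_d r}$ converges pointwise to $\cdf_{a_{[k]}\boxplus_d r}$ at continuity points and the estimate survives in the limit. This is exactly the behaviour encoded by the cut-up and cut-down measures $\mu|^{a}$, $\mu|_{a}$, together with the continuity of the roots of $\boxplus_d$ in the input roots; I expect the careful bookkeeping of which roots stay finite, and the justification of the equivalence between interlacing and real-rootedness of the whole pencil underlying the preservation-of-interlacing step, to be the delicate points.
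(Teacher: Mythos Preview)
Your approach is correct and essentially identical to the paper's: your one-sided lemma is precisely its Lemma~\ref{lem:pan}, proved by the same interlacing chain obtained by successively raising the smallest roots to a common large value, and the Kolmogorov and L\'evy deductions via shifts match the paper's argument. The obstacle you anticipate is illusory---there is no need to send $M\to\infty$, since choosing any fixed finite $M\ge\max(\alpha_d,\beta_d)$ already makes $b\le a_{[k]}$ hold (you need $M\ge\beta_d$, not just $M\ge\alpha_d$, for the top $k$ comparisons) while the chain simultaneously gives $d_K(a\boxplus_d r,\,a_{[k]}\boxplus_d r)\le k/d$, so the lemma follows directly without any limiting procedure.
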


The preceding relation concerning the monotonicity of distances (Theorem \ref{thm:main2}) is instrumental in extending convergence arguments to the setting of free convolutions.

\begin{theorem} \label{thm:main3}
Let $p_d, q_d \in \calP_d (\R)$ such that their empirical root distributions $\meas{p_d}$ and $\meas{q_d}$ weakly converge to $\mu, \nu \in \calM (\R)$ as $d \to \infty$, respectively.
Then $\meas{p_d \boxplus_d q_d}$ weakly converges to $\mu \boxplus \nu$ as $d \to \infty$.
If $\lim_{d\to\infty} d_K(\meas{p_d}, \mu) = 0$ and $\lim_{d\to\infty} d_K(\meas{q_d}, \nu) = 0$, then $\lim_{d\to\infty} d_K(\meas{p_d \boxplus_d q_d}, \mu \boxplus \nu) = 0$.
\end{theorem}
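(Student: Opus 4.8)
The plan is to derive both statements from the non-expansiveness of $\boxplus_d$ established in Theorem \ref{thm:main2}, using it to transfer the already-known convergence in the compactly supported case together with the corresponding regularity of the free convolution $\boxplus$. Throughout write $R_d := \meas{p_d \boxplus_d q_d}$.

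For the weak convergence I would work with the L\'evy distance, which metrizes weak convergence, and prove $d_L(R_d, \mu \boxplus \nu) \to 0$. Fix $a > 0$ with $\pm a$ continuity points of $\cdf_\mu$ and $\cdf_\nu$, and let $\tilde p_d, \tilde q_d \in \calP_d(\R)$ be obtained by moving every root of $p_d, q_d$ lying outside $[-a,a]$ to the nearer endpoint (the cut-up/cut-down construction); their empirical measures converge weakly to the compactly supported truncations $\tilde\mu, \tilde\nu$. Setting $\tilde R_d := \meas{\tilde p_d \boxplus_d \tilde q_d}$, I would split
\begin{equation*}
d_L(R_d, \mu \boxplus \nu) \le d_L(R_d, \tilde R_d) + d_L(\tilde R_d, \tilde\mu \boxplus \tilde\nu) + d_L(\tilde\mu \boxplus \tilde\nu, \mu \boxplus \nu).
\end{equation*}
By Theorem \ref{thm:main2}(2) the first term is at most $d_L(p_d, \tilde p_d) + d_L(q_d, \tilde q_d)$, and since the cut operation displaces only the mass beyond $a$, each summand is bounded by the corresponding tail mass, which is uniformly small in $d$ by tightness of the weakly convergent sequences. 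The third term tends to $0$ as $a \to \infty$ by the continuity of $\boxplus$ under weak convergence \cite{BV93}. For each fixed $a$ the middle term tends to $0$ as $d \to \infty$ by the known moment-method convergence in the compactly supported case. An $\varepsilon/3$ argument --- first fixing $a$ large, then letting $d \to \infty$ --- finishes this part.

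For the Kolmogorov statement I would argue similarly but through Theorem \ref{thm:main2}(1), exploiting the stronger hypothesis to replace $p_d, q_d$ by the quantile polynomials $u_d, v_d$ whose roots are $\cdf_\mu^{-1}((k-\tfrac12)/d)$ and $\cdf_\nu^{-1}((k-\tfrac12)/d)$, $k = 1, \dots, d$. These satisfy $d_K(\meas{u_d}, \mu), d_K(\meas{v_d}, \nu) = O(1/d)$ and, crucially, reproduce every atom of $\mu$ and of $\nu$ as an exact multiple root of $u_d$ or $v_d$ of the matching normalized multiplicity. Theorem \ref{thm:main2}(1) and the triangle inequality give $d_K(R_d, \meas{u_d \boxplus_d v_d}) \le d_K(p_d, u_d) + d_K(q_d, v_d) \to 0$, so it remains to prove $d_K(\meas{u_d \boxplus_d v_d}, \mu \boxplus \nu) \to 0$; weak convergence of $\meas{u_d \boxplus_d v_d}$ to $\mu \boxplus \nu$ is already supplied by the first part, so the task is to upgrade weak convergence of the distribution functions to uniform convergence.

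This upgrade is the \emph{main obstacle}, and the atom structure is exactly what resolves it. Away from the atoms of $\mu \boxplus \nu$ the limiting distribution function is continuous, so a P\'olya-type monotonicity argument yields uniform convergence there; the only danger is that $\cdf_{u_d \boxplus_d v_d}$ oscillate across a jump. By the characterization of atoms of the free convolution \cite{BV98}, every atom of $\mu \boxplus \nu$ sits at a point $\alpha + \beta$ with $\mu(\{\alpha\}) + \nu(\{\beta\}) > 1$ and has mass $\mu(\{\alpha\}) + \nu(\{\beta\}) - 1$. Since $u_d, v_d$ carry exact multiple roots at $\alpha$ and $\beta$ of normalized multiplicities tending to $\mu(\{\alpha\})$ and $\nu(\{\beta\})$, Theorem \ref{thm:main1} places a root of $u_d \boxplus_d v_d$ exactly at $\alpha + \beta$ whose normalized multiplicity converges to $\mu(\{\alpha\}) + \nu(\{\beta\}) - 1$ --- matching the free atom in both location and mass --- while Theorem \ref{thm:main1}(1) forbids any other non-trivial atoms. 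As only finitely many atoms have mass above a given threshold, one matches these explicitly and absorbs the remaining light atoms into the monotonicity estimate, obtaining $d_K \to 0$. The delicate point is to keep the matching of atom locations, atom masses, and tail mass simultaneously uniform; this is precisely where the Kolmogorov (rather than merely weak) hypothesis on the inputs is needed, since it is $d_K$-closeness that forces the atom masses --- not just the weak limits --- to converge.
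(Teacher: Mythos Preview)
Your argument for weak convergence is essentially the paper's: the same three–term L\'evy splitting via truncation at $\pm a$, control of the outer terms by the non-expansiveness inequalities (Theorem~\ref{thm:main2} and Proposition~\ref{prop:monotonicity_distance_additive}), and Proposition~\ref{prop:convergence-finite-free-additive} for the compactly supported middle term. The only cosmetic difference is that the paper bounds the two outer terms in $d_K$ (where the truncation error is literally the tail mass) and reserves $d_L$ for the middle term; your all-$d_L$ version is equally valid.

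For the Kolmogorov statement your route works but is more circuitous than necessary. Two simplifications the paper exploits: first, the detour through quantile polynomials $u_d,v_d$ is not needed, because the hypothesis $d_K(\meas{p_d},\mu)\to 0$ already forces $\meas{p_d}(\{\alpha\})\to\mu(\{\alpha\})$ and $\cdf_{p_d}(\alpha)\to\cdf_\mu(\alpha)$ at every point, so $p_d$ itself acquires a root at $\alpha$ of the right normalized multiplicity for large $d$. Second, and more to the point, you use only the \emph{mass} part of Theorem~\ref{thm:main1} and then run a P\'olya/atom–bookkeeping argument to upgrade weak to uniform; the paper instead invokes the \emph{CDF identity} in Theorem~\ref{thm:main1}(2) together with its free analogue (Proposition~\ref{prop:cdf_at_atom_additive}) to get, in one line,
\[
\cdf_{p_d\boxplus_d q_d}(\gamma)=\cdf_{p_d}(\alpha)+\cdf_{q_d}(\beta)-1\ \longrightarrow\ \cdf_\mu(\alpha)+\cdf_\nu(\beta)-1=\cdf_{\mu\boxplus\nu}(\gamma)
\]
at every atom $\gamma=\alpha+\beta$ of $\mu\boxplus\nu$. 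Since weak convergence plus pointwise CDF convergence at every discontinuity of the limit already implies $d_K\to 0$, this finishes the proof without any threshold or monotonicity estimate. Your approach buys nothing extra here; the CDF identity is the sharper tool.
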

This theorem demonstrates that the finite free additive convolution $\boxplus_d$ effectively approximates the free additive convolution $\boxplus$ as $d \to \infty$.

\subsection{Organization of the paper}
This paper is organized into four sections.
Section \ref{sec:preliminary} introduces fundamental concepts and preliminary results concerning both free and finite free probability.
Section \ref{sec:atoms} is devoted to studying the roots of finite free additive and multiplicative convolutions of polynomials, culminating in the proof of Theorem \ref{thm:main1}.
Finally, Section \ref{sec:convergence}, focuses on convergence properties and the proofs of Theorems \ref{thm:main2} and \ref{thm:main3}.

\section{Preliminary} \label{sec:preliminary}

\subsection{Probability measures} \label{sec:Probability}

We use the symbol $\calM(\C)$ to denote the family of all Borel probability measures on $\C$.
When we want to specify that the support of the measure is contained in a subset $K \subset \C$, we use the notation $\calM(K)$.
For instance, $\calM(\R)$ denotes the set of Borel probability measures on the real line.
For a sequence of probability measures $\{\mu_n\}_{n \in \N} \subset \calM(\R)$ and $\mu \in \calM(\R)$, the weak convergence $\mu_n \weakto \mu$ is defined by
\[
\lim_{n\to\infty}\int_\R f(x) \mu_n(dx) = \int_\R f(x) \mu(dx),
\]
for all bounded continuous functions $f: \R \to \R$.
Given $\mu \in \calM(\R)$, the cumulative distribution function (CDF) $\cdf_\mu$ is defined by
\[
\cdf_\mu (x) := \mu((-\infty, x]),
\]
for all $x \in \R$.
It is well known that the weak convergence of $\mu_n$ to $\mu$ is equivalent to the point-wise convergence of their CDFs $\cdf_{\mu_n}$ to $\cdf_\mu$ at all continuity points of $\cdf_\mu$.

\begin{definition}[Distances on measures] \label{def:Distance.on.measures}
Let $\mu$ and $\nu$ be probability measures on $\R$.
\begin{enumerate}
\item The Kolmogorov distance of $\mu$ and $\nu$ is defined as
\[
d_K (\mu, \nu) := \inf\{\epsilon >0 \mid \cdf_\mu (x) - \epsilon \le \cdf_\nu (x) \le \cdf_\mu (x) + \epsilon \quad (\text{for all } x \in \R)\}.
\]
\item The Lévy distance of $\mu$ and $\nu$ is defined as
\[
d_L (\mu, \nu) := \inf\{\epsilon >0 \mid \cdf_\mu (x - \epsilon) - \epsilon \le \cdf_\nu (x) \le \cdf_\mu (x + \epsilon) + \epsilon  \quad (\text{for all } x \in \R)\}.
\]
\end{enumerate}
\end{definition}
Clearly, $0 \le d_L (\mu, \nu) \le d_K (\mu, \nu) \le 1$ by definition.
It is well known that the topology induced by the Lévy distance is the same as the weak topology on probability measures, whereas the Kolmogorov distance induces a stronger topology.
For $\mu, \mu_n \in \calM(\R)$ ($n\in \N$), the convergence $\lim_{n\to\infty} d_K(\mu_n,\mu) = 0$ is equivalent to $\lim_{n\to\infty} \cdf_{\mu_n}(x) = \cdf_{\mu}(x)$ for all $x\in\R$.
Hence, if $\mu_n \weakto \mu$ as $n\to\infty$, and $\lim_{n\to\infty} \cdf_{\mu_n}(x) = \cdf_{\mu}(x)$ for every $x\in\R$ such that $\mu(\{x\})>0$, then $\lim_{n\to\infty} d_K(\mu_n,\mu) = 0$.

\begin{definition}[Transformations on measures] \label{def:transformation.on.measures}
Let $\mu \in \calM (\C)$ and $c\in \C$.
\begin{enumerate}
\item The shift of $\mu$ by $c$, denoted $\Shi_c \mu$, is the measure satisfying
\[
(\Shi_c \mu) (B) = \mu(\{x-c \mid x \in B\}) \qquad \text{for every Borel set } B \subset \C.
\]
\item For $c \neq 0$, the dilation of $\mu$ by $c$, denoted $\Dil_c \mu$, is the measure satisfying
\[
(\Dil_c \mu) (B) = \mu(\{x/c \mid x \in B\}) \qquad \text{for every Borel set } B \subset \C.
\]
If $c = 0$, we adopt the convention $\Dil_0 \mu = \delta_0$.
\item The reflected measure $\widehat{\mu} = \Dil_{-1} \mu \in \calM(\C)$ is given by
\[
\widehat{\mu} (B) = \mu(\{-x \mid x \in B\}) \qquad \text{for every Borel set } B \subset \C.
\]
\item For $\mu \in \calM(\C\setminus\{0\})$, the reversed measure $\mu^{\langle -1 \rangle}$ is given by
\[
\mu^{\langle -1 \rangle} (B) = \mu(\{1/x \mid x \in B\}) \qquad \text{for every Borel set } B \subset \C\setminus\{0\}.
\]
\end{enumerate}
\end{definition}
It is clear that $\cdf_{\Shi_c \mu}(x) = \cdf_\mu (x-c)$ for $\mu \in \calM(\R)$ and $c \in \R$.
Hence, $d_K (\Shi_c \mu, \Shi_c \nu) = d_K (\mu, \nu)$ and $d_L (\Shi_c \mu, \Shi_c \nu) = d_L (\mu, \nu)$ for $\mu, \nu \in \calM(\R)$.
In other words, the shift operation preserves these distances.
Similarly, for $c \ne 0$, $\cdf_{\Dil_c \mu}(x) = \cdf_\mu (x / c)$, and hence
\[
d_K (\Dil_c \mu, \Dil_c \nu) = d_K (\mu, \nu).
\]
Furthermore,
\begin{equation} \label{eq:reflected_cdf_relation}
\cdf_{\mu}(x) + \cdf_{\widehat{\mu}}(-x) = 1 + \mu(\{x\}),
\end{equation}
for all $x \in \R$.

\begin{definition}[A partial order on measures] \label{def:PartialOrderOnMeasures}
Let $\mu, \nu \in \calM (\R)$.
If $\cdf_\mu (x) \le \cdf_\nu (x)$ for all $x \in \R$, then we write $\nu \le \mu$.
\end{definition}
It follows directly from the definition that $\mu \le \Shi_c \mu$ for every $c > 0$.

\begin{definition}[Cut-up and cut-down measures] \label{def:cut_measures}
Given a measure $\mu \in \calM (\R)$ with CDF $\cdf_\mu$, we define the cut-up measure at $a \in \R$ as the measure $\mu|^a \in \calM ((-\infty, a])$ with CDF
\[
\cdf_{\mu|^a}(x) =
\begin{cases}
1 & \quad \text{if } x\ge a, \\
\cdf_\mu(x) & \quad \text{if } x< a.
\end{cases}
\]
Similarly, we define the cut-down measure at $a \in \R$ as the measure $\mu|_a \in \calM ([a, \infty))$ with CDF
\[
\cdf_{\mu|_a}(x) =
\begin{cases}
0 & \quad \text{if } x< a, \\
\cdf_\mu(x) & \quad \text{if } x\ge a.
\end{cases}
\]
For $a > 0$, we use the notation
\[
(\mu)_a := (\mu|^a)|_{-a} \in \calM ([-a, a]).
\]
\end{definition}

The following basic properties of cut-down and cut-up measures follow directly from their definitions:
\begin{itemize}
    \item $\mu|_a \weakto \mu$ as $a \to - \infty$, and $\mu|^a \weakto \mu$ as $a \to \infty$.
    \item Since $\cdf_{\mu|_a}(x) \le \cdf_{\mu}(x) \le \cdf_{\mu|^a}(x)$ for all $x \in \R$, we have $\mu|^a \le \mu \le \mu|_a$.
\end{itemize}

\subsection{Free probability} \label{sec:FreeProbability}

We briefly review the regular properties of free additive and multiplicative convolutions, which are our main research topics.
For more basic knowledge, the readers are referred to the standard textbooks in free probability \cite{NS, MS, VDN92}.

Given measures $\mu, \nu \in \calM(\R)$, the free additive convolution $\mu \boxplus \nu$ is defined as the distribution of $X + Y$, where $X$ and $Y$ are freely independent non-commutative random variables distributed as $\mu$ and $\nu$, respectively.
Additionally, if $\nu \ge 0$ (equivalently $Y \ge 0$), the free multiplicative convolution $\mu \boxtimes \nu$ is defined as the distribution of $\sqrt{Y}X\sqrt{Y}$.
Bercovici and Voiculescu proved the following monotonicity for the free convolutions \cite[Propositions 4.15 and 4.16]{BV93}:

\begin{proposition} \label{prop:monotonicity_free_convolutions}
Let $\mu_1, \mu_2$, and $\nu$ be probability measures on $\R$ such that $\mu_1 \le \mu_2$. Then $\mu_1 \boxplus \nu \le \mu_2 \boxplus \nu$.
If $\nu \in \calM(\R_{\ge 0})$ then $\mu_1 \boxtimes \nu \le \mu_2 \boxtimes \nu$.
\end{proposition}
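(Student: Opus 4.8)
The plan is to realize the three measures as spectral distributions of self-adjoint operators in a single tracial $W^*$-probability space $(\mathcal{A}, \tau)$ and to reduce the entire statement to one monotonicity principle: in such a space the operator order is refined by the order $\le$ on distributions. Concretely, I would first isolate the following lemma. If $A, B$ are (possibly unbounded, affiliated) self-adjoint operators in $(\mathcal{A}, \tau)$ with $A \le B$, then their spectral distributions $\mathrm{dist}(A)$ and $\mathrm{dist}(B)$ satisfy $\mathrm{dist}(A) \le \mathrm{dist}(B)$, i.e. $\cdf_{\mathrm{dist}(B)}(s) \le \cdf_{\mathrm{dist}(A)}(s)$ for every $s \in \R$. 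This is exactly the inequality $\tau(\chi_{(s,\infty)}(A)) \le \tau(\chi_{(s,\infty)}(B))$, the von Neumann algebra version of Weyl/Ky Fan monotonicity, which follows from the monotonicity of the generalized singular numbers (the spectral scale) under the operator order.

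For the additive case I would choose free self-adjoint $X \sim \mu_1$ and $Y \sim \nu$ in a common space (a free product). Since $\mu_1 \le \mu_2$ is stochastic dominance, monotone rearrangement furnishes a nondecreasing transport map $\psi \colon \R \to \R$ with $\psi_*\mu_1 = \mu_2$ and $\psi(t) \ge t$ on $\mathrm{supp}\,\mu_1$ (take $\psi = \cdf_{\mu_2}^{-1} \circ \cdf_{\mu_1}$, with the usual adjustments at atoms). Setting $X' := \psi(X)$ by functional calculus, one has $\mathrm{dist}(X') = \mu_2$, and since $X' \in W^*(X)$ it is free from $Y$; hence $\mathrm{dist}(X'+Y) = \mu_2 \boxplus \nu$ while $\mathrm{dist}(X+Y) = \mu_1 \boxplus \nu$. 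Because $\psi(t) - t \ge 0$ on the spectrum of $X$, functional calculus gives $X' - X \ge 0$, so $X + Y \le X' + Y$, and the lemma yields $\mu_1 \boxplus \nu \le \mu_2 \boxplus \nu$.

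For the multiplicative case I would take $Y \ge 0$ with $\mathrm{dist}(Y) = \nu$ free from $X$, and use the same $X' = \psi(X) \ge X$; conjugation by $\sqrt{Y}$ preserves the order, so $\sqrt{Y}\,X\,\sqrt{Y} \le \sqrt{Y}\,X'\,\sqrt{Y}$, and since these have distributions $\mu_1 \boxtimes \nu$ and $\mu_2 \boxtimes \nu$, the lemma again closes the argument. The main obstacle is purely the lack of compact support: when the $\mu_i$ are not compactly supported, $X, X', Y$ are only affiliated operators, so the functional calculus, the operator inequalities, and the lemma must all be justified in the affiliated (form-sense) setting. The cleanest way around this is to prove everything first for compactly supported measures, where all operators are bounded, and then approximate by the truncations $(\mu_i)_a$: these preserve the order $\le$ (their CDFs coincide with those of $\mu_i$ on $[-a,a)$) and converge weakly as $a \to \infty$, so by the weak continuity of $\boxplus$ and $\boxtimes$ and the fact that $\le$ passes to weak limits (through convergence of CDFs at continuity points together with right-continuity), the desired inequality survives the limit.
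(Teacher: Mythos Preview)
The paper does not prove this proposition at all: it is quoted as a preliminary result from Bercovici--Voiculescu \cite[Propositions~4.15 and~4.16]{BV93}, so there is no ``paper's own proof'' to compare against. Your operator-theoretic strategy is in fact the one used in \cite{BV93}: realize everything in a tracial $W^*$-probability space, use that the operator order $A\le B$ implies the distributional order $\mathrm{dist}(A)\le\mathrm{dist}(B)$ (via generalized $s$-numbers), and reduce to $X+Y\le X'+Y$, resp.\ $\sqrt{Y}X\sqrt{Y}\le\sqrt{Y}X'\sqrt{Y}$.

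There is, however, one genuine gap in your construction of $X'$. Writing $X'=\psi(X)$ with $\psi=\cdf_{\mu_2}^{-1}\circ\cdf_{\mu_1}$ fails precisely when $\mu_1$ has an atom that must be split to obtain $\mu_2$. For instance, if $\mu_1=\delta_0$ and $\mu_2=\tfrac12\delta_0+\tfrac12\delta_1$, then $\mu_1\le\mu_2$, yet no Borel $\psi$ satisfies $\psi_*\mu_1=\mu_2$; no ``adjustment at atoms'' repairs this within the functional calculus of $X$, because $W^*(X)$ is then too small. The clean fix---and this is what \cite{BV93} effectively does---is not to insist on $X'\in W^*(X)$ but to realize \emph{both} $X$ and $X'$ as the quantile functions $t\mapsto\cdf_{\mu_1}^{-1}(t)$ and $t\mapsto\cdf_{\mu_2}^{-1}(t)$ inside the diffuse abelian algebra $B=L^\infty([0,1])$, where the inequality $\cdf_{\mu_1}^{-1}\le\cdf_{\mu_2}^{-1}$ gives $X\le X'$ directly; then take the free product of $B$ with an algebra containing $Y$, so that $X$ and $X'$ are simultaneously free from $Y$. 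With this correction your argument goes through, and the truncation step you outline for unbounded supports is fine (the order $\le$ does pass to weak limits as you say). Note, though, that invoking ``weak continuity of $\boxplus$ and $\boxtimes$'' in that step is itself one of the main results of \cite{BV93}, so be aware you are not giving a self-contained proof independent of that paper.
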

Using these inequalities, we can easily obtain the following basic fact.

\begin{corollary} \label{cor:support_bounds_free_convolutions}
For $a,b \in \R$, if $\mu \in \calM((-\infty, a])$ and $\nu \in \calM((-\infty, b])$ then $\mu \boxplus \nu \in \calM((-\infty, a+b])$.
Similarly, for $a,b \in \R_{\ge 0}$, if $\mu \in \calM([0, a])$ and $\nu \in \calM([0,b])$ then $\mu \boxtimes \nu \in \calM([0, ab])$.
\end{corollary}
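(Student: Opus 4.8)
The plan is to reduce both support statements to the partial order of Definition \ref{def:PartialOrderOnMeasures} and then feed them into the monotonicity of Proposition \ref{prop:monotonicity_free_convolutions}, combined with the elementary point-mass identities $\delta_a \boxplus \delta_b = \delta_{a+b}$ and $\delta_a \boxtimes \delta_b = \delta_{ab}$. The first thing I would record is the dictionary between support containment and the order. Since $\cdf_{\delta_a}(x) = 0$ for $x < a$ and $\cdf_{\delta_a}(x) = 1$ for $x \ge a$, the pointwise inequality $\cdf_{\delta_a} \le \cdf_\mu$ holds exactly when $\cdf_\mu(x) = 1$ for every $x \ge a$, that is, when $\mu$ is supported in $(-\infty, a]$. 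In the notation of the partial order this reads
\[
\mu \in \calM((-\infty, a]) \iff \mu \le \delta_a,
\]
and the symmetric computation with $\cdf_{\delta_0}$ gives $\mu \in \calM([0,\infty)) \iff \delta_0 \le \mu$.

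For the additive part, I would start from $\mu \le \delta_a$ and $\nu \le \delta_b$ and apply Proposition \ref{prop:monotonicity_free_convolutions} twice, using commutativity of $\boxplus$:
\[
\mu \boxplus \nu \le \delta_a \boxplus \nu = \nu \boxplus \delta_a \le \delta_b \boxplus \delta_a = \delta_{a+b}.
\]
By transitivity of $\le$ this gives $\mu \boxplus \nu \le \delta_{a+b}$, which by the dictionary above is precisely $\mu \boxplus \nu \in \calM((-\infty, a+b])$.

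For the multiplicative part the structure is the same, but the claim splits into an upper and a lower bound. Since $\nu \in \calM([0,b]) \subset \calM(\R_{\ge 0})$, the convolution $\mu \boxtimes \nu$ is defined and Proposition \ref{prop:monotonicity_free_convolutions} applies. From $\mu \le \delta_a$, $\nu \le \delta_b$ and commutativity of $\boxtimes$ on $\calM(\R_{\ge 0})$ I would obtain
\[
\mu \boxtimes \nu \le \delta_a \boxtimes \nu = \nu \boxtimes \delta_a \le \delta_b \boxtimes \delta_a = \delta_{ab},
\]
which yields the upper bound $\mu \boxtimes \nu \in \calM((-\infty, ab])$; the degenerate case $a=0$ is automatically covered since then $\delta_{ab} = \delta_0$. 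For the lower bound I would use $\delta_0 \le \mu$ (valid because $\mu \in \calM([0,a])$) together with $\delta_0 \boxtimes \nu = \delta_0$, giving $\delta_0 = \delta_0 \boxtimes \nu \le \mu \boxtimes \nu$, hence $\mu \boxtimes \nu \in \calM([0,\infty))$. Intersecting the two bounds produces $\mu \boxtimes \nu \in \calM([0, ab])$.

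I do not expect a genuine obstacle here: once the order/support dictionary is in place the argument is a two-line chain of inequalities. The only ingredients needing (entirely standard) justification are the point-mass identities $\delta_a \boxplus \delta_b = \delta_{a+b}$, $\delta_a \boxtimes \delta_b = \delta_{ab}$, and $\delta_0 \boxtimes \nu = \delta_0$, together with the commutativity of $\boxplus$ and of $\boxtimes$ on $\calM(\R_{\ge 0})$, all of which are basic facts of free probability.
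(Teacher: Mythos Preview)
Your proposal is correct and follows essentially the same approach as the paper: translate support containment into the partial order $\mu \le \delta_a$, then chain two applications of Proposition \ref{prop:monotonicity_free_convolutions} through the point-mass identities. You are in fact more careful than the paper, which dispatches the multiplicative case with ``a similar argument applies,'' whereas you separately verify the lower bound $\delta_0 \le \mu \boxtimes \nu$.
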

\begin{proof}
By assumption, $\mu \le \delta_a$ and $\nu \le \delta_b$. Hence, by Proposition \ref{prop:monotonicity_free_convolutions}, we have
\[
\mu \boxplus \nu \le \mu \boxplus \delta_b \le \delta_a \boxplus \delta_b = \delta_{a+b}.
\]
A similar argument applies to the free multiplicative convolution $\boxtimes$.
\end{proof}

Bercovici and Voiculescu also showed the monotonicity for the Kolmogorov and Lévy distances with respect to free additive convolution:
\begin{proposition}[{\cite[Proposition 4.13]{BV93}}] \label{prop:monotonicity_distance_additive}
If $\mu_1, \mu_2, \nu_1$, and $\nu_2$ are probability measures on $\R$, then
\begin{equation} \label{eq:monotonicity_levy_additive}
d_L(\mu_1 \boxplus \nu_1, \mu_2 \boxplus \nu_2) \le d_L(\mu_1, \mu_2) + d_L(\nu_1, \nu_2),
\end{equation}
and
\[
d_K(\mu_1 \boxplus \nu_1, \mu_2 \boxplus \nu_2) \le d_K(\mu_1, \mu_2) + d_K(\nu_1, \nu_2).
\]
\end{proposition}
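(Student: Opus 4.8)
The plan is to reduce the two–variable inequality to a one–variable contraction estimate and then to convert that estimate, for each distance, into a statement about the partial order so that Proposition \ref{prop:monotonicity_free_convolutions} can be brought to bear. Since $d_K$ and $d_L$ are genuine metrics and $\boxplus$ is commutative, the triangle inequality gives
\[
d_K(\mu_1 \boxplus \nu_1, \mu_2 \boxplus \nu_2) \le d_K(\mu_1 \boxplus \nu_1, \mu_2 \boxplus \nu_1) + d_K(\mu_2 \boxplus \nu_1, \mu_2 \boxplus \nu_2),
\]
and likewise for $d_L$. Hence it suffices to prove the one–variable inequalities $d_K(\mu_1 \boxplus \rho, \mu_2 \boxplus \rho) \le d_K(\mu_1, \mu_2)$ and $d_L(\mu_1 \boxplus \rho, \mu_2 \boxplus \rho) \le d_L(\mu_1, \mu_2)$ for an arbitrary fixed $\rho \in \calM(\R)$; the second slot is then handled symmetrically using commutativity.

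For the Kolmogorov distance I would pass to the partial order by a $\min$/$\max$ construction. Set $\epsilon := d_K(\mu_1, \mu_2)$, so that $|\cdf_{\mu_1}(x) - \cdf_{\mu_2}(x)| \le \epsilon$ for all $x$, and let $\sigma, \tau$ be the measures with $\cdf_\sigma := \max(\cdf_{\mu_1}, \cdf_{\mu_2})$ and $\cdf_\tau := \min(\cdf_{\mu_1}, \cdf_{\mu_2})$; these are genuine distribution functions, and by construction $\tau \le \mu_i \le \sigma$ in the partial order for $i=1,2$, while $0 \le \cdf_\sigma - \cdf_{\mu_1} \le \epsilon$ and $0 \le \cdf_{\mu_1} - \cdf_\tau \le \epsilon$. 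Proposition \ref{prop:monotonicity_free_convolutions} applied to the chain $\tau \le \mu_2 \le \sigma$ sandwiches $\cdf_{\mu_2 \boxplus \rho}$ between $\cdf_{\tau \boxplus \rho}$ and $\cdf_{\sigma \boxplus \rho}$, and the whole argument collapses to the following \emph{monotone contraction} claim: whenever $\cdf_b \le \cdf_a \le \cdf_b + \epsilon$ pointwise (so that $a,b$ are comparable in the partial order and $d_K(a,b) \le \epsilon$), one has $\cdf_{b \boxplus \rho} \le \cdf_{a \boxplus \rho} \le \cdf_{b \boxplus \rho} + \epsilon$. Granting this for $(a,b) = (\sigma, \mu_1)$ and $(a,b) = (\mu_1, \tau)$ and combining the two resulting sandwiches yields $|\cdf_{\mu_1 \boxplus \rho} - \cdf_{\mu_2 \boxplus \rho}| \le \epsilon$, i.e. $d_K(\mu_1 \boxplus \rho, \mu_2 \boxplus \rho) \le \epsilon$. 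The Lévy inequality follows by the same philosophy, the comparison measures now being built from the shifts $\Shi_{\pm\epsilon}\mu_1$ (using that shifts commute with $\boxplus$ and preserve both distances), reducing it to the $d_L$–counterpart of the same contraction claim.

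The main obstacle is precisely this monotone contraction claim, that convolving with a fixed $\rho$ cannot enlarge the maximal vertical (resp. Lévy) gap between two order–comparable nearby measures. One half, $\cdf_{b \boxplus \rho} \le \cdf_{a \boxplus \rho}$, is immediate from Proposition \ref{prop:monotonicity_free_convolutions}; the quantitative bound $\cdf_{a \boxplus \rho} - \cdf_{b \boxplus \rho} \le \epsilon$ is the hard part. I would attack it through the order–monotone interpolation $\cdf_{c_t} := \min(\cdf_a, \cdf_b + t)$, $t \in [0,\epsilon]$, a chain running from $b$ to $a$ whose image under $\boxplus \rho$ is again a chain, so that $t \mapsto \cdf_{c_t \boxplus \rho}(x)$ is nondecreasing for each $x$ and the goal becomes a control on its total increment. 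The naive classical moves fail here: because $\boxplus$ is not affine in its arguments one cannot split off the displaced $\epsilon$–mass as a mixture, and because $a,b$ may have gaps or atoms one cannot dominate $a$ by a bounded horizontal shift of $b$. This is exactly where the genuine free–probabilistic input enters, and where Bercovici–Voiculescu use the subordination function $\omega$ of $\boxplus \rho$, whose monotone boundary behaviour transports the level sets of $\cdf_b$ to those of $\cdf_{b \boxplus \rho}$ without amplifying vertical differences. Reducing first to compactly supported $a,b,\rho$—legitimate by the weak continuity of $\boxplus$ together with the support bounds of Corollary \ref{cor:support_bounds_free_convolutions}—is a convenient way to make these boundary arguments rigorous before recovering the general case by approximation.
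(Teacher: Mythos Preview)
The paper does not prove this proposition at all; it is quoted verbatim from \cite[Proposition~4.13]{BV93} as background in Section~\ref{sec:FreeProbability}, so there is no ``paper's own proof'' to compare against.

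As for your proposal itself: the reduction to the one--variable contraction $d_K(\mu_1\boxplus\rho,\mu_2\boxplus\rho)\le d_K(\mu_1,\mu_2)$ via the triangle inequality is correct, and the $\min/\max$ sandwich that reduces further to the \emph{comparable} case $\cdf_b\le\cdf_a\le\cdf_b+\epsilon$ is also fine. But you then explicitly leave the monotone contraction claim unproved, calling it ``the hard part'' and gesturing at subordination. That is a genuine gap: the interpolation $\cdf_{c_t}=\min(\cdf_a,\cdf_b+t)$ gives you monotonicity of $t\mapsto\cdf_{c_t\boxplus\rho}(x)$ but no control whatsoever on the total increment, and nothing else in the proposal supplies that control.

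Your attribution of the missing step to subordination is also historically off. The general subordination machinery for $\boxplus$ postdates \cite{BV93}; their actual argument is operator--algebraic. The point is that $d_K(\mu_1,\mu_2)\le\epsilon$ allows one to realize $X_1\sim\mu_1$ and $X_2\sim\mu_2$ in a common tracial $W^*$--probability space together with a projection $p$ of trace $\ge 1-\epsilon$ on which the two operators are suitably comparable, and the spectral--distribution estimate for $X_i+Y$ then follows from a compression argument (their Proposition~4.5). If you want to complete your outline, that is the ingredient to supply; the order--monotonicity Proposition~\ref{prop:monotonicity_free_convolutions} alone, even combined with weak continuity and Corollary~\ref{cor:support_bounds_free_convolutions}, will not close the $\epsilon$--gap.
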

Notice that inequality \eqref{eq:monotonicity_levy_additive}, by the triangle inequality for the Lévy distance, is equivalent to
\[
d_L(\mu \boxplus \rho, \nu \boxplus \rho) \le d_L(\mu, \nu)
\]
for $\mu, \nu, \rho \in \calM(\R)$; the same holds for the Kolmogorov distance.

The analogue also holds for free multiplicative convolution, but only with respect to the Kolmogorov distance.
\begin{proposition}[{\cite[Proposition 4.14]{BV93}}] \label{prop:monotonicity_distance_multiplicative_kolmogorov}
If $\mu_1, \mu_2, \nu_1$, and $\nu_2$ are probability measures on $\R_{\ge 0}$, then
\[
d_K(\mu_1 \boxtimes \nu_1, \mu_2 \boxtimes \nu_2) \le d_K(\mu_1, \mu_2) + d_K(\nu_1, \nu_2).
\]
\end{proposition}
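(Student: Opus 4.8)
The plan is to peel the two-variable, two-sided estimate down to a single one-sided comparison, convert that comparison into an order relation that $\boxtimes$ is known to respect, and then isolate one genuinely analytic step as the crux.

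\emph{Step 1 (reduction to one variable and one side).} Using the triangle inequality for $d_K$ and the commutativity of $\boxtimes$ on $\calM(\R_{\ge 0})$, I would write
\[ d_K(\mu_1\boxtimes\nu_1,\mu_2\boxtimes\nu_2)\le d_K(\mu_1\boxtimes\nu_1,\mu_2\boxtimes\nu_1)+d_K(\mu_2\boxtimes\nu_1,\mu_2\boxtimes\nu_2); \]
both terms are of the one-sided form (the second after applying commutativity of $\boxtimes$), so it suffices to prove $d_K(\mu\boxtimes\rho,\nu\boxtimes\rho)\le d_K(\mu,\nu)$ for all $\mu,\nu,\rho\in\calM(\R_{\ge 0})$. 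Setting $\epsilon:=d_K(\mu,\nu)$, the hypothesis reads $|\cdf_\mu-\cdf_\nu|\le\epsilon$ and the goal is $|\cdf_{\mu\boxtimes\rho}-\cdf_{\nu\boxtimes\rho}|\le\epsilon$, both pointwise on $\R$.

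\emph{Step 2 (from an $\epsilon$-band to the order).} The idea is to replace $\mu$ by a measure whose CDF is $\mu$'s CDF pushed up by $\epsilon$ and capped at $1$. Concretely, define $\mu^\sharp\in\calM(\R_{\ge 0})$ by $\cdf_{\mu^\sharp}(x)=\min(1,\cdf_\mu(x)+\epsilon)$ for $x\ge 0$ and $\cdf_{\mu^\sharp}(x)=0$ for $x<0$; because the support lies in $\R_{\ge 0}$, the excess mass is absorbed as an atom at the origin and $\mu^\sharp$ is an honest probability measure (this is exactly where the naive downward cap $\max(0,\cdf_\mu-\epsilon)$ would instead leak mass to $+\infty$). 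By construction $\cdf_{\mu^\sharp}\ge\cdf_\mu$ and $\cdf_{\mu^\sharp}\ge\cdf_\nu$, i.e. $\mu^\sharp\le\mu$ and $\mu^\sharp\le\nu$ in the order of Definition \ref{def:PartialOrderOnMeasures}. Proposition \ref{prop:monotonicity_free_convolutions} then gives $\mu^\sharp\boxtimes\rho\le\nu\boxtimes\rho$, that is $\cdf_{\nu\boxtimes\rho}\le\cdf_{\mu^\sharp\boxtimes\rho}$; running the same construction on $\nu$ yields $\cdf_{\mu\boxtimes\rho}\le\cdf_{\nu^\sharp\boxtimes\rho}$. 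Both halves of the target estimate therefore follow from the single sub-claim
\[ \cdf_{\mu^\sharp\boxtimes\rho}(x)\le\cdf_{\mu\boxtimes\rho}(x)+\epsilon \quad\text{for all } x\in\R, \]
and this is where the difficulty is concentrated.

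\emph{Step 3 (the crux).} The obstacle is that $\boxtimes$ is not linear over mixtures, so I cannot simply split off the displaced $\epsilon$-chunk of $\mu^\sharp$. Note that $\mu^\sharp$ is precisely $\mu$ with its top $\epsilon$-mass slid down to $0$; introducing the threshold $a$ with $\mu((a,\infty))=\epsilon$ and the cut-up measure $\mu|^a$ of Definition \ref{def:cut_measures}, one has the order chain $\mu^\sharp\le\mu|^a\le\mu$, which splits the sub-claim into a pure truncation step ($\mu\rightsquigarrow\mu|^a$) and a pure sliding step ($\mu|^a\rightsquigarrow\mu^\sharp$), each displacing total mass $\epsilon$. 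I would try to bound the convolved CDF-gap of each elementary move by $\epsilon$ using only order-monotonicity and the dilation covariance $\delta_c\boxtimes\rho=\Dil_c\rho$, together with a limiting argument to dispose of atoms of $\mu$ and the possible non-attainment of the infimum defining $\epsilon$. I expect this elementary sandwich to be the main obstacle, and if it turns out too lossy the guaranteed fallback is the analytic route underlying \cite{BV93}: the Nevanlinna--Pick monotonicity of the subordination function for $\boxtimes$ (equivalently the multiplicativity of the $S$-transform) transports the $\epsilon$-band through the convolution and closes the sub-claim directly.
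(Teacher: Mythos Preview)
The paper does not prove this proposition at all: it is quoted as a known result from \cite{BV93}, so there is no in-paper argument to compare against. I can therefore only assess your proposal on its own merits.

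Steps 1 and 2 are fine: the triangle-inequality reduction to $d_K(\mu\boxtimes\rho,\nu\boxtimes\rho)\le d_K(\mu,\nu)$ is standard, and your construction of $\mu^\sharp$ with $\cdf_{\mu^\sharp}=\min(1,\cdf_\mu+\epsilon)$ on $\R_{\ge 0}$ is correct and does give $\mu^\sharp\le\mu$ and $\mu^\sharp\le\nu$, hence $\cdf_{\nu\boxtimes\rho}\le\cdf_{\mu^\sharp\boxtimes\rho}$ via Proposition~\ref{prop:monotonicity_free_convolutions}.

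The gap is Step 3, and it is a real one. Your sub-claim $\cdf_{\mu^\sharp\boxtimes\rho}\le\cdf_{\mu\boxtimes\rho}+\epsilon$ is \emph{itself} an instance of the inequality you are trying to prove, since $d_K(\mu,\mu^\sharp)=\epsilon$; so unless the special structure of $\mu^\sharp$ makes it genuinely easier, you have not reduced the problem. Your proposed decomposition $\mu\rightsquigarrow\mu|^a\rightsquigarrow\mu^\sharp$ does not break the circularity: each of the two moves still displaces mass $\epsilon$ and still asks for a CDF-gap bound of $\epsilon$ after $\boxtimes\rho$, which is again the same question (just with a more specific pair of measures). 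Order-monotonicity and dilation covariance alone will not close this, because they give inequalities in only one direction, whereas you need a two-sided $\epsilon$-band to survive the convolution. Your own hedging (``I expect this elementary sandwich to be the main obstacle'') is accurate, and your fallback---invoking the operator-theoretic or subordination argument of \cite{BV93}---is in fact the proof, not a fallback. As written, the proposal amounts to ``try something elementary; if it fails, cite the known analytic proof,'' which leaves the essential content unestablished.
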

Note that a similar inequality with respect to the Lévy distance does not hold; for instance, consider $\mu_1 = \delta_0$, $\mu_2 = \delta_{1/2}$, and $\nu_1 = \nu_2 = \delta_2$.
Actually, in \cite{BV93}, the inequality
\begin{equation} \label{eq:monotonicity_multiplicative_one_sided}
d_K(\mu \boxtimes \rho, \nu \boxtimes \rho) \le d_K(\mu, \nu)
\end{equation}
was proved for $\mu, \nu \in \calM(\R)$ and $\rho \in \calM(\R_{\ge 0})$. We use this in Section \ref{sec:convergence}.

Bercovici and Voiculescu also pointed out that the following atom regularity holds.
\begin{proposition}[{\cite[Theorem 7.4]{BV98}}] \label{prop:atom_regularity_additive}
Let $\mu, \nu \in \calM(\R)$ and let $\gamma$ be a real number.
The following are equivalent:
\begin{enumerate}
    \item[(i)] $\gamma$ is an atom of $\mu \boxplus \nu$;
    \item[(ii)] there exist atoms $\alpha$ of $\mu$ and $\beta$ of $\nu$ such that $\gamma = \alpha + \beta$ and $\mu(\{\alpha\}) + \nu(\{\beta\}) >1$.
    In this case, we have $(\mu \boxplus \nu)(\{\gamma\}) = \mu(\{\alpha\}) + \nu(\{\beta\})-1$.
\end{enumerate}
\end{proposition}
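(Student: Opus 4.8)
The plan is to leave the measure-theoretic picture and work entirely with analytic transforms, reading off atoms from the boundary behaviour of reciprocal Cauchy transforms. For $\mu \in \calM(\R)$ write $G_\mu(z) = \int_\R (z-x)^{-1}\,\mu(dx)$ for its Cauchy transform on the upper half-plane $\C^+$, and $F_\mu = 1/G_\mu$ for the reciprocal Cauchy (Pick) function, which maps $\C^+$ into $\C^+$. The first ingredient I would record is the standard dictionary between atoms and angular derivatives: a point $\alpha \in \R$ satisfies $\mu(\{\alpha\}) = a > 0$ if and only if $F_\mu$ has angular limit $F_\mu(\alpha) = 0$ together with finite angular derivative $\angle F_\mu'(\alpha) = 1/a$; with the convention $1/\infty = 0$ this reads $\mu(\{\alpha\}) = 1/\angle F_\mu'(\alpha)$. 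This follows from the expansion $G_\mu(z) = a/(z-\alpha) + (\text{bounded})$ near an atom, together with the Julia--Wolff--Carath\'eodory theorem, which identifies the angular derivative of a self-map of $\C^+$ at a real boundary point with $\liminf \operatorname{Im} F_\mu(z)/\operatorname{Im} z$.

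The second ingredient is analytic subordination: there exist analytic $\omega_1, \omega_2 : \C^+ \to \C^+$ with $\operatorname{Im}\omega_j(z) \ge \operatorname{Im} z$ such that $F_{\mu\boxplus\nu}(z) = F_\mu(\omega_1(z)) = F_\nu(\omega_2(z))$ and $\omega_1(z) + \omega_2(z) = z + F_{\mu\boxplus\nu}(z)$. Abbreviate $h = \mu\boxplus\nu$. For (i) $\Rightarrow$ (ii), suppose $\gamma$ is an atom of $h$ with mass $c$, so $\angle F_h'(\gamma) = 1/c$ and $F_h(\gamma) = 0$. Taking imaginary parts in the sum relation and dividing by $y = \operatorname{Im} z$ along $z = \gamma + iy$ gives $\operatorname{Im}\omega_1/y + \operatorname{Im}\omega_2/y = 1 + \operatorname{Im} F_h/y \to 1 + 1/c$; since each summand is $\ge 1$ by the subordination inequality, both $\liminf \operatorname{Im}\omega_j/y$ are finite. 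By Julia--Wolff--Carath\'eodory this forces $\omega_1, \omega_2$ to have real angular limits $\alpha := \omega_1(\gamma)$ and $\beta := \omega_2(\gamma)$ with finite angular derivatives; letting $z \to \gamma$ in the sum relation yields $\alpha + \beta = \gamma$. Since $F_\mu(\omega_1(z)) = F_h(z) \to 0$ we get $F_\mu(\alpha) = 0$, and the chain rule for angular derivatives gives $\angle F_\mu'(\alpha) = \angle F_h'(\gamma)/\angle\omega_1'(\gamma)$, finite and positive, so by the dictionary $\alpha$ is an atom of $\mu$, and likewise $\beta$ of $\nu$.

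The mass formula then drops out of a single computation. Writing $a = \mu(\{\alpha\})$ and $b = \nu(\{\beta\})$, the chain rule gives $\angle\omega_1'(\gamma) = a\,\angle F_h'(\gamma) = a/c$ and $\angle\omega_2'(\gamma) = b/c$, while differentiating the sum relation in the angular sense gives $\angle\omega_1'(\gamma) + \angle\omega_2'(\gamma) = 1 + \angle F_h'(\gamma)$. Substituting yields $a/c + b/c = 1 + 1/c$, i.e.\ $c = a + b - 1$; in particular $c > 0$ forces $a + b > 1$, establishing (ii) and the identity $(\mu\boxplus\nu)(\{\gamma\}) = a + b - 1$. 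For the converse (ii) $\Rightarrow$ (i), I would set $\gamma = \alpha + \beta$ and run the same bookkeeping in reverse: the task is to show $\omega_1(z) \to \alpha$ and $\omega_2(z) \to \beta$ as $z \to \gamma$, after which the identical angular-derivative computation produces a finite $\angle F_h'(\gamma)$ and hence an atom of the predicted mass $a + b - 1 > 0$.

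The main obstacle is the rigorous boundary analysis, not the algebra. The delicate points are: (a) justifying, via Julia--Wolff--Carath\'eodory, that finiteness of $\liminf \operatorname{Im}\omega_j/\operatorname{Im} z$ really yields real angular limits and a genuine chain rule for the composition $F_h = F_\mu\circ\omega_1$ at the boundary (all limits must be taken non-tangentially, and one must exclude $\omega_j$ escaping to $\infty$, which is ruled out here because $\operatorname{Im}\omega_j \to 0$ while the sum stays bounded near $\gamma$); and (b) in the converse direction, showing a priori that the subordination functions converge to the specified atoms $\alpha, \beta$ — this is where the hypothesis $a + b > 1$ must enter to guarantee that $\gamma = \alpha+\beta$ is genuinely charged, e.g.\ by verifying $\liminf_{y\downarrow 0} \operatorname{Im} F_h(\gamma+iy)/y < \infty$. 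Once these analytic facts are secured, the equivalence and the mass identity follow mechanically from the two relations $F_h = F_\mu\circ\omega_1 = F_\nu\circ\omega_2$ and $\omega_1 + \omega_2 = z + F_h$.
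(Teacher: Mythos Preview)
The paper does not prove this proposition at all: it is quoted as a preliminary result from \cite[Theorem~7.4]{BV98} and used as a black box (for instance in the proofs of Proposition~\ref{prop:cdf_at_atom_additive} and Theorem~\ref{thm:main3}). There is therefore no in-paper argument to compare against.

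Your sketch is nonetheless the standard subordination/Julia--Wolff--Carath\'eodory route that underlies the original Bercovici--Voiculescu argument. The direction (i)$\Rightarrow$(ii) is handled cleanly: the identity $\operatorname{Im}\omega_1/y + \operatorname{Im}\omega_2/y = 1 + \operatorname{Im} F_h/y$ together with $\operatorname{Im}\omega_j/y \ge 1$ forces both angular derivatives to be finite, and the chain-rule bookkeeping $a/c + b/c = 1 + 1/c$ is correct. For (ii)$\Rightarrow$(i), however, you identify the gap but do not close it. One must show \emph{a priori} that $\omega_1(\gamma+iy)\to\alpha$ and $\omega_2(\gamma+iy)\to\beta$ (equivalently, that $\liminf_{y\downarrow 0}\operatorname{Im} F_h(\gamma+iy)/y<\infty$), and this is not a formal reversal of the forward computation: nothing you have written yet uses the hypothesis $a+b>1$ in an essential way. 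In the literature this direction is typically handled either by an operator-algebraic fact about spectral projections of free variables (if $p,q$ are free projections with $\tau(p)+\tau(q)>1$ then $\tau(p\wedge q)\ge\tau(p)+\tau(q)-1$), or by a more delicate boundary analysis of the Denjoy--Wolff fixed point of the subordination iteration near $\gamma$. As written, your plan is a correct outline for one direction with the harder half left as an acknowledged but unfilled gap.
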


Furthermore, we can prove the following.
\begin{proposition} \label{prop:cdf_at_atom_additive}
Given $\mu, \nu \in \calM(\R)$, if $\alpha, \beta \in \R$ are atoms of $\mu, \nu$, respectively, such that $\mu(\{\alpha\}) + \nu(\{\beta\}) >1$, then
\[
\cdf_{\mu \boxplus \nu}(\alpha+\beta) = \cdf_{\mu}(\alpha) + \cdf_{\nu}(\beta) - 1.
\]
\end{proposition}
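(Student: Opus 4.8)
The plan is to establish the two inequalities $\cdf_{\mu\boxplus\nu}(\alpha+\beta)\ge \cdf_\mu(\alpha)+\cdf_\nu(\beta)-1$ and $\cdf_{\mu\boxplus\nu}(\alpha+\beta)\le \cdf_\mu(\alpha)+\cdf_\nu(\beta)-1$ separately. Write $m := \mu(\{\alpha\})$ and $n := \nu(\{\beta\})$, so $m+n>1$ and, by Proposition \ref{prop:atom_regularity_additive}, $(\mu\boxplus\nu)(\{\alpha+\beta\}) = m+n-1$. I will use repeatedly that reflection commutes with free additive convolution, $\widehat{\mu\boxplus\nu} = \widehat\mu\boxplus\widehat\nu$ (immediate from $-(X+Y)=(-X)+(-Y)$ for free $X,Y$), together with the reflected form of Corollary \ref{cor:support_bounds_free_convolutions}: applying that corollary to reflected measures shows that if $\rho\in\calM([a,\infty))$ and $\sigma\in\calM([b,\infty))$ then $\rho\boxplus\sigma\in\calM([a+b,\infty))$.

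For the lower bound I would pass to the cut-down measures $\mu|_\alpha$ and $\nu|_\beta$ of Definition \ref{def:cut_measures}. By construction $\mu|_\alpha\in\calM([\alpha,\infty))$ has an atom at $\alpha$ of mass $\cdf_{\mu|_\alpha}(\alpha) = \cdf_\mu(\alpha)$, and likewise $\nu|_\beta\in\calM([\beta,\infty))$ has an atom at $\beta$ of mass $\cdf_\nu(\beta)$. Since $\cdf_\mu(\alpha)+\cdf_\nu(\beta)\ge m+n>1$, Proposition \ref{prop:atom_regularity_additive} shows that $\alpha+\beta$ is an atom of $\mu|_\alpha\boxplus\nu|_\beta$ of mass $\cdf_\mu(\alpha)+\cdf_\nu(\beta)-1$. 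Moreover, by the reflected support bound above, $\mu|_\alpha\boxplus\nu|_\beta\in\calM([\alpha+\beta,\infty))$, so it carries no mass strictly below $\alpha+\beta$; hence $\cdf_{\mu|_\alpha\boxplus\nu|_\beta}(\alpha+\beta) = \cdf_\mu(\alpha)+\cdf_\nu(\beta)-1$. Finally $\mu\le\mu|_\alpha$ and $\nu\le\nu|_\beta$, so Proposition \ref{prop:monotonicity_free_convolutions} gives $\mu\boxplus\nu\le\mu|_\alpha\boxplus\nu|_\beta$, i.e. $\cdf_{\mu|_\alpha\boxplus\nu|_\beta}\le\cdf_{\mu\boxplus\nu}$ pointwise; evaluating at $\alpha+\beta$ yields the lower bound.

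For the upper bound I would apply the lower bound, now established for every admissible configuration, to the reflected data $\widehat\mu,\widehat\nu$, whose atoms $-\alpha,-\beta$ satisfy $\widehat\mu(\{-\alpha\})+\widehat\nu(\{-\beta\}) = m+n>1$. This gives $\cdf_{\widehat\mu\boxplus\widehat\nu}(-\alpha-\beta)\ge \cdf_{\widehat\mu}(-\alpha)+\cdf_{\widehat\nu}(-\beta)-1$. Using the reflection identity \eqref{eq:reflected_cdf_relation} for $\mu$ and $\nu$ to rewrite $\cdf_{\widehat\mu}(-\alpha) = 1+m-\cdf_\mu(\alpha)$ and $\cdf_{\widehat\nu}(-\beta)=1+n-\cdf_\nu(\beta)$, and applying \eqref{eq:reflected_cdf_relation} once more to $\mu\boxplus\nu$ at $\alpha+\beta$ (whose atom mass is $m+n-1$) to get $\cdf_{\mu\boxplus\nu}(\alpha+\beta) = (m+n)-\cdf_{\widehat\mu\boxplus\widehat\nu}(-\alpha-\beta)$, I obtain $\cdf_{\mu\boxplus\nu}(\alpha+\beta)\le \cdf_\mu(\alpha)+\cdf_\nu(\beta)-1$. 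Combining the two inequalities proves the claim.

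The main subtlety is the upper bound: the cut-up measures $\mu|^\alpha,\nu|^\beta$ only yield the trivial estimate $\cdf_{\mu\boxplus\nu}(\alpha+\beta)\le 1$, so a direct monotonicity argument in the ``leftward'' direction fails. The reflection trick circumvents this by trading the upper bound for the already-proven lower bound on the reflected measures; the only external inputs it needs are the commutation $\widehat{\mu\boxplus\nu}=\widehat\mu\boxplus\widehat\nu$ and the support bound of Corollary \ref{cor:support_bounds_free_convolutions}, both elementary.
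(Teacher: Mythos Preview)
Your proof is correct and follows essentially the same approach as the paper: the lower bound via cut-down measures and monotonicity, then the reflection trick applied to $\widehat\mu,\widehat\nu$ for the other direction. The only cosmetic difference is that the paper adds the two reflected inequalities and observes that their sum hits the known atom-mass identity $1+(\mu\boxplus\nu)(\{\alpha+\beta\})=\mu(\{\alpha\})+\nu(\{\beta\})$, forcing equality in both, whereas you unwind the reflection explicitly via \eqref{eq:reflected_cdf_relation} to read off the upper bound directly; the underlying argument is the same.
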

\begin{proof}
Define the cut-down measures $\mu|_\alpha$ and $\nu|_\beta$. We have $\mu \le \mu|_\alpha$ and $\nu \le \nu|_\beta$.
Then, by Proposition \ref{prop:monotonicity_free_convolutions}, $\mu \boxplus \nu \le \mu|_\alpha \boxplus \nu|_\beta$, which implies
\[
\cdf_{\mu \boxplus \nu} (\alpha+\beta) \ge \cdf_{\mu|_\alpha \boxplus \nu|_\beta}(\alpha+\beta).
\]
Note that $\mu|_\alpha \boxplus \nu|_\beta \in \calM([\alpha+\beta, \infty))$ by Corollary \ref{cor:support_bounds_free_convolutions}.
The mass at the point $\alpha+\beta$ is given by Proposition \ref{prop:atom_regularity_additive} as
$\mu|_\alpha \boxplus \nu|_\beta(\{\alpha+\beta\}) = \mu|_\alpha (\{\alpha\}) + \nu|_\beta (\{\beta\}) - 1$.
Since $\mu|_\alpha(\{\alpha\}) = \mathcal{F}_\mu(\alpha)$ and $\nu|_\beta(\{\beta\}) = \mathcal{F}_\nu(\beta)$, and $\alpha+\beta$ is the leftmost point in the support of $\mu|_\alpha \boxplus \nu|_\beta$, we have $\cdf_{\mu|_\alpha \boxplus \nu|_\beta}(\alpha+\beta) = \mu|_\alpha \boxplus \nu|_\beta(\{\alpha+\beta\})$.
Thus,
\begin{equation} \label{eq:cdf_inequality_additive_1}
    \cdf_{\mu \boxplus \nu} (\alpha+\beta) \ge \cdf_{\mu}(\alpha) + \cdf_{\nu}(\beta) - 1.
\end{equation}
The same argument applies to the reflected measures $\widehat{\mu}$ and $\widehat{\nu}$, noting that $\widehat{\mu \boxplus \nu} = \widehat{\mu} \boxplus \widehat{\nu}$, with atoms $(-\alpha, -\beta, -\alpha -\beta)$. This yields
\begin{equation} \label{eq:cdf_inequality_additive_2}
    \cdf_{\widehat{\mu \boxplus \nu}} (- \alpha -\beta) \ge \cdf_{\widehat{\mu}}(-\alpha) + \cdf_{\widehat{\nu}}(-\beta) - 1.
\end{equation}
Combining inequalities \eqref{eq:cdf_inequality_additive_1}, \eqref{eq:cdf_inequality_additive_2} and Equation \eqref{eq:reflected_cdf_relation}, we obtain
\begin{equation*}
    1+ \mu\boxplus\nu (\{\alpha+\beta\}) \ge \mu(\{\alpha\}) + \nu(\{\beta\}).
\end{equation*}
Since equality holds by Proposition \ref{prop:atom_regularity_additive}, it implies that equality must hold in both \eqref{eq:cdf_inequality_additive_1} and \eqref{eq:cdf_inequality_additive_2}.
\end{proof}

The corresponding result for $\boxtimes$ was first established for $\mu, \nu \in \calM(\R_{\ge 0})$ by Belinschi and finally extended for $\mu \in \calM(\R)$ by Arizmendi et al.
\begin{proposition}[{\cite[Theorem 4.1]{Bel03} and \cite[Corollary 6.6]{ACSY}}] \label{prop:atom_regularity_multiplicative}
Suppose that $\mu \in \calM(\R)$ and $\nu \in \calM(\R_{\ge 0})$.
\begin{enumerate}
    \item[(i)] $(\mu \boxtimes \nu) (\{0\}) = \max\{\mu(\{0\}), \nu (\{0\})\}$.
    \item[(ii)] Let $\gamma \neq 0$. We have $(\mu \boxtimes \nu) (\{\gamma\})>0$ if and only if there exist $\alpha, \beta \in \R$ such that $\gamma = \alpha \beta$ and $\mu(\{\alpha\}) + \nu(\{\beta\})>1$.
    In this case, we have $(\mu \boxtimes \nu) (\{\gamma\}) = \mu(\{\alpha\}) + \nu(\{\beta\})-1$.
\end{enumerate}
\end{proposition}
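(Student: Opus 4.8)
The plan is to prove both parts through the analytic theory of $\boxtimes$, because---in contrast to Proposition \ref{prop:cdf_at_atom_additive}, which merely converts a known atom mass into a CDF value---here the atom masses must be \emph{produced}, and the order/distance monotonicity of Section \ref{sec:FreeProbability} does not by itself locate atoms. The central tool is the subordination description of $\boxtimes$. Writing $\psi_\rho(z) = \int \frac{zt}{1-zt}\,\rho(dt)$ and $\eta_\rho = \psi_\rho/(1+\psi_\rho)$ for $\rho \in \calM(\R_{\ge 0})$ (with the appropriate extension to signed $\mu$ used in \cite{ACSY}), there exist analytic subordination functions $\omega_1, \omega_2$ on a suitable domain with $\eta_{\mu \boxtimes \nu} = \eta_\mu \circ \omega_1 = \eta_\nu \circ \omega_2$. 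Atoms of $\mu \boxtimes \nu$ are read off from the non-tangential boundary behaviour of $\eta_{\mu \boxtimes \nu}$, and through the two subordination identities every such atom at $\gamma \neq 0$ forces a simultaneous boundary atom of $\mu$ at some $\alpha$ and of $\nu$ at some $\beta$ with $\alpha\beta = \gamma$.

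For part (ii) I would proceed as follows. First, fix the boundary point of the relevant domain corresponding to $\gamma$ and record the Julia--Wolff--Carath\'eodory expansions of $\omega_1$ and $\omega_2$ there, expressing the candidate mass $(\mu \boxtimes \nu)(\{\gamma\})$ through their angular derivatives. Second, use the standard correspondence between the mass of an atom of a measure and the angular derivative (equivalently, the boundary residue) of its $\eta$-transform to rewrite these derivatives in terms of $\mu(\{\alpha\})$ and $\nu(\{\beta\})$. Balancing the two identities then yields simultaneously the existence criterion $\mu(\{\alpha\}) + \nu(\{\beta\}) > 1$ and the exact value $(\mu \boxtimes \nu)(\{\gamma\}) = \mu(\{\alpha\}) + \nu(\{\beta\}) - 1$. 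In the spirit of Proposition \ref{prop:cdf_at_atom_additive}, the ``if'' direction together with the lower bound $(\mu \boxtimes \nu)(\{\gamma\}) \ge \mu(\{\alpha\}) + \nu(\{\beta\}) - 1$ can, for positive measures, be recovered more elementarily by passing to cut measures and using monotonicity (Proposition \ref{prop:monotonicity_free_convolutions}) and the support bound for $\boxtimes$ (Corollary \ref{cor:support_bounds_free_convolutions}) to realise $\gamma$ as an extreme point of the support.

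For part (i) I would treat $0$ separately, since it is absorbing for multiplication and the combination rule differs. Realising $\mu \boxtimes \nu$ as the distribution of $\sqrt{Y}X\sqrt{Y}$ for free $X \sim \mu$ and $Y \sim \nu \ge 0$, the mass $(\mu \boxtimes \nu)(\{0\})$ equals the trace of the projection onto $\ker(\sqrt{Y}X\sqrt{Y})$. This kernel always contains $\ker Y$ (of trace $\nu(\{0\})$) and, transversally, the part governed by the compression of $X$ to $(\ker Y)^\perp$; applying the min--max formulas for traces of meets and joins of free projections to this transverse part gives its trace as $\max\{\mu(\{0\}) - \nu(\{0\}), 0\}$, and adding $\nu(\{0\})$ produces exactly $\max\{\mu(\{0\}), \nu(\{0\})\}$.

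The principal obstacle is the ``only if'' half of (ii) together with the exactness of the mass: one must exclude atoms of $\mu \boxtimes \nu$ not arising from a matching pair $(\alpha, \beta)$, and upgrade the angular-derivative bookkeeping from an inequality to the precise equality. This is exactly the point at which fine boundary regularity of the subordination functions---finiteness and strict positivity of the relevant angular derivatives---is required; it is established in \cite{Bel03} for $\mu, \nu \in \calM(\R_{\ge 0})$ and extended to $\mu \in \calM(\R)$ in \cite{ACSY}. Since reproving this boundary analysis would duplicate substantial work, I would invoke these references for the hard direction and reserve the elementary arguments above for the existence and lower-bound parts.
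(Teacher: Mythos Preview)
The paper does not supply its own proof of this proposition: it is stated in Section~\ref{sec:FreeProbability} as a preliminary result imported from the literature, with the attribution \cite[Theorem 4.1]{Bel03} and \cite[Corollary 6.6]{ACSY}, and no argument is given. Your outline via subordination, $\eta$-transforms, and Julia--Wolff--Carath\'eodory boundary behaviour is precisely the route taken in those references, and your own proposal already acknowledges that the hard ``only if'' direction with the exact mass formula must ultimately be delegated to \cite{Bel03,ACSY}. So there is nothing to compare against in the paper itself, and your plan is consistent with the cited proofs.
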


In particular, if $\mu, \nu \in \calM(\R_{\ge 0})$, we can obtain the corresponding result to Proposition \ref{prop:cdf_at_atom_additive}, whose proof is almost identical.

\begin{proposition}\label{prop:cdf_at_atom_multiplicative}
Given $\mu, \nu \in \calM(\R_{\ge 0})$ with $\alpha, \beta \in \R_{>0}$ being atoms of $\mu, \nu$, respectively, such that $\mu(\{\alpha\}) + \nu(\{\beta\}) >1$, we have
\[
\cdf_{\mu \boxtimes \nu}(\alpha\beta) = \cdf_{\mu}(\alpha) + \cdf_{\nu}(\beta) - 1.
\]
\end{proposition}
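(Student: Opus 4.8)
The plan is to mimic exactly the proof of Proposition \ref{prop:cdf_at_atom_additive}, replacing additive structure by multiplicative structure throughout. The statement even flags this: ``whose proof is almost identical.'' So I would first establish one inequality via the monotonicity of $\boxtimes$ with respect to the partial order (Proposition \ref{prop:monotonicity_free_convolutions}), and then recover the matching reverse inequality by applying the same argument to a suitably transformed pair of measures, combining the two via an identity analogous to \eqref{eq:reflected_cdf_relation}.

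For the first inequality, I would introduce the cut-down measures $\mu|_\alpha$ and $\nu|_\beta$. Since both $\mu$ and $\nu$ are supported on $\R_{\ge 0}$ and $\alpha, \beta > 0$, these cut-down measures are supported in $[\alpha, \infty)$ and $[\beta, \infty)$ respectively, with $\mu \le \mu|_\alpha$ and $\nu \le \nu|_\beta$. By the multiplicative monotonicity (Proposition \ref{prop:monotonicity_free_convolutions}, applied with the second argument in $\calM(\R_{\ge 0})$), we get $\mu \boxtimes \nu \le \mu|_\alpha \boxtimes \nu|_\beta$, hence $\cdf_{\mu \boxtimes \nu}(\alpha\beta) \ge \cdf_{\mu|_\alpha \boxtimes \nu|_\beta}(\alpha\beta)$. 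By the multiplicative analogue of Corollary \ref{cor:support_bounds_free_convolutions}, the measure $\mu|_\alpha \boxtimes \nu|_\beta$ is supported on $[\alpha\beta, \infty)$, so $\alpha\beta$ is its leftmost support point and $\cdf_{\mu|_\alpha \boxtimes \nu|_\beta}(\alpha\beta)$ equals the atom mass at $\alpha\beta$. Proposition \ref{prop:atom_regularity_multiplicative}(ii) evaluates this mass as $\mu|_\alpha(\{\alpha\}) + \nu|_\beta(\{\beta\}) - 1 = \cdf_\mu(\alpha) + \cdf_\nu(\beta) - 1$, since $\mu|_\alpha(\{\alpha\}) = \cdf_\mu(\alpha)$ and $\nu|_\beta(\{\beta\}) = \cdf_\nu(\beta)$ by construction of the cut-down measures. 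This gives $\cdf_{\mu \boxtimes \nu}(\alpha\beta) \ge \cdf_\mu(\alpha) + \cdf_\nu(\beta) - 1$.

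For the reverse inequality, I would obtain a second lower bound on a ``complementary'' CDF and then add. The hard part is choosing the right transformation to play the role that reflection $\widehat{\,\cdot\,}$ plays in the additive case, since the multiplicative structure on $\R_{>0}$ corresponds under the logarithm to addition on $\R$, and the natural order-reversing involution is the reversal $\mu^{\langle -1\rangle}$ rather than reflection. I expect that the key facts needed are the compatibility $\widehat{\mu \boxtimes \nu}$-type identity, namely $(\mu \boxtimes \nu)^{\langle -1\rangle} = \mu^{\langle -1\rangle} \boxtimes \nu^{\langle -1\rangle}$, together with an analogue of \eqref{eq:reflected_cdf_relation} relating $\cdf_\mu$ and $\cdf_{\mu^{\langle -1\rangle}}$ at reciprocal points. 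Applying the first-inequality argument to $\mu^{\langle -1\rangle}$ and $\nu^{\langle -1\rangle}$ at the atoms $1/\alpha, 1/\beta$ yields $\cdf_{(\mu \boxtimes \nu)^{\langle -1\rangle}}(1/(\alpha\beta)) \ge \cdf_{\mu^{\langle -1\rangle}}(1/\alpha) + \cdf_{\nu^{\langle -1\rangle}}(1/\beta) - 1$; combining this with the first inequality and the reversal CDF identity should force $1 + (\mu \boxtimes \nu)(\{\alpha\beta\}) \ge \mu(\{\alpha\}) + \nu(\{\beta\})$, which is an equality by Proposition \ref{prop:atom_regularity_multiplicative}(ii). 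Equality then propagates back to both inequalities, delivering the claimed formula.

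The main obstacle I anticipate is verifying the precise reciprocal CDF identity with its atom correction: for $\mu \in \calM(\R_{>0})$ one has $\cdf_{\mu^{\langle -1\rangle}}(x) = \mu(\{y : 1/y \le x\}) = \mu([1/x, \infty))$ for $x > 0$, so $\cdf_\mu(x) + \cdf_{\mu^{\langle -1\rangle}}(1/x) = \cdf_\mu(x) + \mu([x,\infty)) = 1 + \mu(\{x\})$, exactly paralleling \eqref{eq:reflected_cdf_relation}. Care is needed at the boundary $y=0$ and with whether any mass at $0$ interferes, but since $\mu, \nu \in \calM(\R_{\ge 0})$ and the atoms $\alpha,\beta$ are strictly positive, any mass at $0$ only shifts CDF values by a constant that cancels in the combined identity, so the argument goes through verbatim as in the additive case.
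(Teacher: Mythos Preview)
Your first inequality is identical to the paper's. For the reverse direction, however, the paper takes a different route: rather than passing to the reversed measures $\mu^{\langle -1\rangle}, \nu^{\langle -1\rangle}$, it uses the \emph{cut-up} measures $\mu|^\alpha \le \mu$ and $\nu|^\beta \le \nu$, obtains $\mu|^\alpha \boxtimes \nu|^\beta \le \mu \boxtimes \nu$, and then applies the reflection $\widehat{\,\cdot\,}$ (not the reversal) together with \eqref{eq:reflected_cdf_relation} to extract a lower bound on $\cdf_{\widehat{\mu\boxtimes\nu}}(-\alpha\beta)$. The point is that $\mu|^\alpha \boxtimes \nu|^\beta$ is supported in $[0,\alpha\beta]$ by Corollary~\ref{cor:support_bounds_free_convolutions}, so its CDF at $\alpha\beta$ equals $1$ and the atom mass there is computed by Proposition~\ref{prop:atom_regularity_multiplicative}.

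Your reversal approach is conceptually the natural one (it is exactly the additive proof transported through the logarithm), but it costs you two things the paper avoids. First, you need the identity $(\mu\boxtimes\nu)^{\langle -1\rangle} = \mu^{\langle -1\rangle}\boxtimes\nu^{\langle -1\rangle}$ for measures on $\R_{>0}$, which is true but not stated anywhere in the paper (only the polynomial analogue appears). Second, your handling of mass at $0$ is not quite right: if $\mu(\{0\})>0$ then $\mu^{\langle -1\rangle}$ is not defined as a probability measure, so ``shifts CDF values by a constant that cancels'' does not go through verbatim---you would need to either normalize a sub-probability measure (which alters the atom hypothesis) or argue a reduction to the zero-free case. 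The paper's cut-up-plus-reflection argument stays entirely within $\calM(\R_{\ge 0})$ and the tools already established in Section~\ref{sec:FreeProbability}, so it sidesteps both issues.
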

\begin{proof}
In the same manner as the proof of Proposition \ref{prop:cdf_at_atom_additive}, using cut-down measures $\mu|_\alpha$ and $\nu|_\beta$, we obtain
\[
\cdf_{\mu \boxtimes \nu}(\alpha\beta) \ge \cdf_\mu(\alpha) + \cdf_\nu(\beta) -1,
\]
because $\mu \boxtimes \nu \le \mu|_\alpha \boxtimes \nu|_\beta$  implies $\cdf_{\mu \boxtimes \nu}(\alpha\beta) \ge \cdf_{\mu|_\alpha \boxtimes \nu|_\beta}(\alpha\beta) = \cdf_\mu(\alpha) + \cdf_\nu(\beta) -1$.

Similarly, using cut-up measures, $\mu|^\alpha \le \mu$ and $\nu|^\beta \le \nu$. Thus $\mu|^\alpha \boxtimes \nu|^\beta \le \mu \boxtimes \nu$. This implies $\widehat{\mu \boxtimes \nu} \le \widehat{\mu|^\alpha \boxtimes \nu|^\beta}$.
Therefore, $\cdf_{\widehat{\mu \boxtimes \nu}} (-\alpha \beta) \ge \cdf_{\widehat{\mu|^\alpha \boxtimes \nu|^\beta}} (-\alpha \beta)$.
Using Equation \eqref{eq:reflected_cdf_relation}, Corollary \ref{cor:support_bounds_free_convolutions}, and Proposition \ref{prop:atom_regularity_multiplicative}, we have:
\begin{align*}
\cdf_{\widehat{\mu|^\alpha \boxtimes \nu|^\beta}} (-\alpha \beta)
&= 1 + \mu|^\alpha \boxtimes \nu|^\beta(\{\alpha\beta\}) - \cdf_{\mu|^\alpha \boxtimes \nu|^\beta}(\alpha\beta) \\
&= \mu|^\alpha(\{\alpha\}) + \nu|^\beta(\{\beta\}) -1,
\end{align*}
since $\cdf_{\mu|^\alpha \boxtimes \nu|^\beta}(\alpha\beta) = 1$ as the support of $\mu|^\alpha \boxtimes \nu|^\beta$ is in $(-\infty, \alpha\beta]$ (for $\alpha, \beta > 0$).
We also have $\mu|^\alpha(\{\alpha\}) = \mu([\alpha, \infty))$ and $\nu|^\beta(\{\beta\}) = \nu([\beta, \infty))$.
So, $\cdf_{\widehat{\mu \boxtimes \nu}}(-\alpha\beta) \ge \mu([\alpha, \infty)) + \nu([\beta, \infty)) - 1$.
The rest of the argument, showing that equality holds, is similar to the discussion in the proof of Proposition \ref{prop:cdf_at_atom_additive}.
\end{proof}

\subsection{Polynomials} \label{sec:polynomials}

Let the symbol $\C_d[x]$ denote the set of polynomials over $\C$ of degree at most $d$, i.e., $\C_d[x] = \{ \sum_{k=0}^d a_k x^k \mid a_k \in \C\}$.
We use $\calP_d(\C) \subset \C_d[x]$ to denote the set of monic polynomials of degree $d$.
For a subset $K$ of the complex plane, $\calP_d (K)$ denotes the set of $p \in \calP_d(\C)$ such that the roots of $p$ are contained in $K$.
For instance, $\calP_d (\R)$ is the set of monic polynomials of degree $d$ having only real roots.
Given $p (x) = \prod_{i=1}^d (x-\lambda_i) \in \calP_d(\C)$, the symbol $\coef_k^{(d)}(p)$ denotes the normalized $k$-th elementary symmetric polynomial on the roots of $p$, namely
\[
\coef_k^{(d)}(p) = \binom{d}{k}^{-1} \sum_{1 \le i_1 < \dots < i_k \le d} \lambda_{i_1} \dots \lambda_{i_k},
\]
with the convention that $\coef_0^{(d)}(p) = 1$.
Hence, we can express any $p \in \calP_d(\C)$ as
\begin{equation} \label{eq:polynomial.coefficients}
p (x) = \prod_{i=1}^d (x-\lambda_i) = \sum_{k=0}^d (-1)^k \binom{d}{k} \coef_k^{(d)} (p) x^{d-k}.
\end{equation}
The empirical root distribution of $p \in \calP_d(\C)$ is defined as
\[
\meas{p} := \frac{1}{d} \sum_{i=1}^d \delta_{\lambda_i}.
\]
We always identify a polynomial $p$, its coefficients $\{\coef_k^{(d)} (p)\}_{k=0}^d$, and its roots $\{\lambda_i\}_{i=1}^d$, or equivalently, its empirical root distribution $\meas{p}$.
When the coefficient $\coef_k^{(d)} (p)$ is represented, its superscript $(d)$ and the polynomial $(p)$ will often be omitted if $d$ and $p$ are clear from the context.
Note that in most situations, $\coef_0^{(d)} (p) = 1$, since we usually restrict our attention to monic polynomials.

The map $\meas{\cdot}$ provides a bijection from $\calP_d(\C)$ onto the set
\[
\calM_d(\C) := \left\{ \frac{1}{d} \sum_{i=1}^d \delta_{\lambda_i} \mid \lambda_1, \dots, \lambda_d \in \C \right\}.
\]
Moreover, we have that $\meas{\calP_d (K)} = \calM_d (K)$ for every $K \subset \C$, where the latter is defined as $\calM_d (K) = \calM_d (\C) \cap \calM (K)$.
Notice that for every $d$, the subset $\calM_d(\C)$ is invariant under the transformations in Definition \ref{def:transformation.on.measures}; thus, we can use the bijection $\meas{\cdot}$ to define analogous transformations on the set of polynomials.

\begin{definition}[Transformations on polynomials] \label{def:polynomial_transformations}
Let $p \in \calP_d(\C)$.
\begin{enumerate}
    \item For $c \in \C$, we define the \textit{shifted polynomial} as $[\Shi_c (p)](x) := p(x-c)$.
    \item For $c\neq 0$, we define the \textit{dilated polynomial} as $[\Dil_c (p)](x) := c^d p(x/c)$.
    If $c=0$, we use the convention $[\Dil_0 (p)](x) = x^d$.
    \item The \textit{reflected polynomial} $\widehat{p} \in \calP_d(\C)$ is given by $\widehat{p}(x) := [\Dil_{-1} (p)](x) = (-1)^d p(-x)$.
    \item If $p \in \calP_d(\C\setminus\{0\})$ (i.e., $p(0) \ne 0$, so $\coef_d(p) \ne 0$), the \textit{reversed polynomial} $p^{\langle -1 \rangle} (x)$ is defined as $x^d p(1/x)/\coef_d(p)$.
\end{enumerate}
\end{definition}
A partial order on $\calP_d (\R)$ is also induced from $\calM_d (\R)$ (see Definition \ref{def:PartialOrderOnMeasures}):
\[
p \le q \quad  \overset{\text{def}}{\Longleftrightarrow} \quad \meas{p} \le \meas{q}
\]
for $p,q \in \calP_d (\R)$.

We can also define the corresponding cut-up and cut-down operations on polynomials using the bijection $\meas{\cdot}$ between $\calP_d(\R)$ and $\calM_d(\R)$.
That is, given $p \in \calP_d(\R)$ with roots $\lambda_i(p)$ for $i=1,\dots,d$, and $a\in\R$,
the \textit{cut-up polynomial} $p|^a \in \calP_d(\R_{\le a})$ has roots $\lambda_i (p|^a)$ defined by
\[
\lambda_i (p|^a) = \min(\lambda_i (p), a),
\]
and the \textit{cut-down polynomial} $p|_a \in \calP_d(\R_{\ge a})$ has roots $\lambda_i (p|_a)$ defined by
\[
\lambda_i (p|_a) = \max(\lambda_i (p), a).
\]
Using the previous definitions:
\begin{align*}
\lambda_i (p|^a) &=
\begin{cases}
\lambda_i (p) & \quad \text{if } \lambda_i (p) < a, \\
a & \quad \text{if } \lambda_i (p) \ge a;
\end{cases}\\
\lambda_i (p|_a) &=
\begin{cases}
a & \quad \text{if } \lambda_i (p) \le a, \\
\lambda_i (p) & \quad \text{if } \lambda_i (p) > a.
\end{cases}
\end{align*}
If $(p_d)_{d \in \N}$ is a sequence of polynomials such that $\meas{p_d} \weakto \mu$, then $\meas{p_d|_a} \weakto \mu|_a$ and $\meas{p_d|^a} \weakto \mu|^a$.
Similarly, $(p)_a := (p|^a)|_{-a} \in \calP_d ([-a, a])$.

Let the symbol $D$ denote the derivation operator $d/dx$ for polynomials.
For $j \le d$, the derivative map $\partial_{j|d}: \calP_d(\C) \to \calP_j(\C)$ is given by
\[
\partial_{j|d} p := \frac{D^{(d-j)}p}{(d)_j},
\]
where $(d)_j = d(d-1)\dots(d-j+1) = d!/(d-j)!$ is the normalization constant.

\subsection{Finite free probability} \label{sec:finitefree}
This section briefly reviews the definitions and essential properties of finite free probability, highlighting its crucial connections and analogies to the established results in free probability, which will serve as a foundation for the subsequent discussion.

\begin{definition}[Finite Free Convolutions] \label{def:finite-free-convolution}
Let $p, q \in \C_d[x]$ be polynomials of degree at most $d$.
\begin{enumerate}
\item The finite free additive convolution $p \boxplus_d q$ is defined by
\[
\coef_k^{(d)}(p \boxplus_d q) = \sum_{i=0}^k \binom{k}{i} \coef_{i}^{(d)}(p) \coef_{k-i}^{(d)}(q),
\]
for $0 \le k \le d$.
\item The finite free multiplicative convolution $p \boxtimes_d q$ is defined by
\[
\coef_k^{(d)}(p \boxtimes_d q) = \coef_{k}^{(d)}(p) \coef_{k}^{(d)}(q),
\]
for $0 \le k \le d$.
\end{enumerate}
\end{definition}

By definition, it immediately follows that these operations are bilinear, commutative, and associative.
Furthermore, the shift, dilation, and derivatives can be represented using finite free convolutions.

\begin{example} Let $p \in \calP_d(\C)$.
\begin{enumerate}
    \item If $q(x) = (x-c)^d$ for $c \in \C$, then $p \boxplus_d q (x) = p(x-c) = [\Shi_c(p)](x)$ and also $p \boxtimes_d q = \Dil_c (p)$.
    \item If $q^{(k)}(x) = x^{d-k}(x-1)^{k}$ for $k=0, \dots, d$, then $p \boxtimes_d q^{(k)} (x) = x^{d-k} \diff_{k|d}p (x)$.
    \item For $q\in\calP_d(\C)$, we have $\widehat{p \boxplus_d q} = \widehat{p} \boxplus_d \widehat{q}$ and $\widehat{p \boxtimes_d q} = \widehat{p} \boxtimes_d q = p \boxtimes_d \widehat{q}$.
    \item Assume $p, q \in \calP_d(\C \setminus \{0\})$.
    Then, ${(p \boxtimes_d q)}^{\langle -1 \rangle} = p^{\langle -1 \rangle} \boxtimes_d q^{\langle -1 \rangle}$ for $p,q \in \calP_d(\C \setminus \{0\})$.
\end{enumerate}
\end{example}

Moreover, the finite free convolutions preserve the real-rootedness of polynomials under many situations.

\begin{proposition} \label{prop:preserve}
Let $d$ be a fixed degree.
\begin{enumerate}
\item If $p, q \in \calP_d (\R)$, then $p \boxplus_d q \in \calP_d (\R)$.
\item If $p \in \calP_d (\R)$ and $q \in \calP_d (\R_{\ge 0})$, then $p \boxtimes_d q \in \calP_d (\R)$.
\item If $p, q \in \calP_d (\R_{\ge 0})$, then $p \boxtimes_d q \in \calP_d (\R_{\ge 0})$.
\end{enumerate}
\end{proposition}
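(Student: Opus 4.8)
The plan is to deduce real-rootedness from the random matrix models of Marcus, Spielman, and Srivastava quoted in the introduction, reducing each case to the spectrum of an explicit matrix together with the real-rootedness of an expected characteristic polynomial.

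For part (1), since $p,q\in\calP_d(\R)$, I would choose Hermitian matrices $A,B$ whose characteristic polynomials are $p$ and $q$ (for instance, diagonal matrices carrying the roots). For every unitary $U$ the matrix $A+UBU^*$ is again Hermitian, so each integrand $\det(xI-A-UBU^*)$ in the representation $p\boxplus_d q(x)=E_U[\det(xI-A-UBU^*)]$ is real-rooted. The conclusion then follows once one knows that this particular average of real-rooted polynomials is itself real-rooted.

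For parts (2) and (3), I would use the multiplicative model $p\boxtimes_d q(x)=E_U[\det(xI-AUBU^*)]$. When $q\in\calP_d(\R_{\ge0})$ one may take $B\succeq 0$; then $AUBU^*$ is the product of the Hermitian matrix $A$ with the positive semidefinite matrix $UBU^*$, and such a product has real spectrum: if $UBU^*$ is invertible it is similar to the Hermitian matrix $(UBU^*)^{1/2}A(UBU^*)^{1/2}$, and the singular case follows by continuity of the roots. This handles part (2). For part (3), taking $A\succeq 0$ as well makes $(UBU^*)^{1/2}A(UBU^*)^{1/2}$ positive semidefinite, so the spectrum is nonnegative, yielding $p\boxtimes_d q\in\calP_d(\R_{\ge0})$. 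In each case the remaining point is again that the expectation over $U$ preserves real-rootedness (and, for part (3), nonnegativity of the roots).

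The main obstacle is precisely this last point: an average of real-rooted polynomials need not be real-rooted, so the crux is the theorem of Marcus, Spielman, and Srivastava \cite{MSS22} that these specific expected characteristic polynomials are real-rooted, established there via interlacing families and the theory of real stable polynomials. I would simply invoke that result. As a more elementary alternative in line with the rest of the paper—at least for multiplicative factors of the simplest shape—one can exploit the identity $p\boxtimes_d q^{(k)}=x^{d-k}\partial_{k|d}p$ from the Example above, combined with the facts that differentiation preserves real-rootedness (Rolle) and maps $\calP_d(\R_{\ge0})$ into itself (Gauss–Lucas); for the additive convolution the analogous elementary route is a Hermite–Poulain-type argument presenting $p\mapsto p\boxplus_d q$ as a real-root-preserving differential operator built from $q$.
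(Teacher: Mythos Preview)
The paper does not supply its own proof of this proposition: it appears in the preliminaries (Section~\ref{sec:finitefree}) as a known background fact, with implicit attribution to the classical works of Walsh and Szeg\H{o} cited in the introduction and to \cite{MSS22}. So there is no in-paper argument to compare against.

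Your sketch is essentially sound as a citation plan. You correctly identify the real obstacle in the random-matrix route---that an average of real-rooted polynomials need not be real-rooted---and you resolve it by invoking the interlacing-families / real-stability machinery of \cite{MSS22}, which is legitimate. Two remarks are worth making. First, the results themselves long predate the random-matrix interpretation: part~(1) is essentially Walsh's coincidence theorem and parts~(2)--(3) go back to Szeg\H{o}, both established via Grace's apolarity theorem rather than via expected characteristic polynomials; so routing through the matrix model and then back to \cite{MSS22} is a detour. Second, your ``more elementary alternative'' for $\boxplus_d$ (a Hermite--Poulain / multiplier-sequence argument presenting $p\mapsto p\boxplus_d q$ as a real-root-preserving constant-coefficient differential operator) is in fact the standard self-contained proof and would be preferable here; by contrast, your multiplicative alternative via $p\boxtimes_d q^{(k)}=x^{d-k}\partial_{k|d}p$ handles only those particular $q^{(k)}$ and does not by itself cover general $q\in\calP_d(\R_{\ge0})$.
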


\begin{definition}[Interlacing] \label{def:interlacing}
Let $q \in \calP_d (\R)$.
We say that $p \in \calP_d(\R)$ interlaces $q$, denoted by $p \lessdot q$, if their roots $\lambda_i(p)$ and $\lambda_i(q)$, when ordered non-decreasingly, satisfy
\[
\lambda_1 (p) \le \lambda_1 (q) \le \lambda_{2} (p) \le \lambda_{2} (q) \le \dots \le \lambda_d (p) \le \lambda_d (q).
\]
Similarly, we say that $p \in \calP_{d-1}(\R)$ interlaces $q \in \calP_d(\R)$, denoted by $p \lessdot q$, if
\[
\lambda_1 (q) \le \lambda_{1} (p) \le \lambda_{2} (q) \le \dots \le \lambda_{d-1} (p) \le \lambda_d (q).
\]
\end{definition}
For $p, q \in \calP_d(\R)$, it is clear that if $p \lessdot q$, then $p \le q$ (in the sense of Definition \ref{def:PartialOrderOnMeasures}).
Besides, it is well known that $p,q \in \calP_d(\R)$ having $p \lessdot q$ is related to the real-rootedness of linear combinations $ap+bq$.

\begin{proposition} \label{prop:InterlacingPreserving}
If $p, q \in \calP_d (\R)$ and $p \lessdot q$, then
\begin{align*}
r \in \calP_d (\R) &\longimplies p \boxplus_d r \lessdot q \boxplus_d r, \\
r \in \calP_d (\R_{\ge 0}) &\longimplies p \boxtimes_d r \lessdot q \boxtimes_d r.
\end{align*}
\end{proposition}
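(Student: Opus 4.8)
The plan is to deduce the directional interlacing $P \lessdot Q$, where I write $P := p\boxplus_d r$ and $Q := q\boxplus_d r$, from two separable facts about the pair $(P,Q)$, so as to isolate precisely where the \emph{direction} of interlacing is used. The organizing principle is an elementary reduction: for monic $P,Q\in\calP_d(\R)$ one has $P\lessdot Q$ if and only if (i) $P$ and $Q$ admit a common interlacer, i.e.\ some $h\in\calP_{d-1}(\R)$ with $h\lessdot P$ and $h\lessdot Q$, and (ii) $\lambda_i(P)\le\lambda_i(Q)$ for every $i$ (that is, $P\le Q$). The forward implication is immediate from Definition \ref{def:interlacing}, and the converse is short: if $\theta_1\le\cdots\le\theta_{d-1}$ are the roots of a common interlacer, then $\lambda_i(Q)\le\theta_i\le\lambda_{i+1}(P)$, which combined with the hypothesis $\lambda_i(P)\le\lambda_i(Q)$ gives $\lambda_i(P)\le\lambda_i(Q)\le\lambda_{i+1}(P)$. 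Thus it suffices to establish (i) and (ii).

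Ingredient (i) is the part that rests only on bilinearity and Proposition \ref{prop:preserve}. Since $p\lessdot q$, the pair $p,q$ has a common interlacer, so every convex combination $(1-t)p+tq$ with $t\in[0,1]$ is monic of degree $d$ and real-rooted; this is the classical equivalence between existence of a common interlacer and real-rootedness of all convex combinations (Obreschkoff; Dedieu--Fell). Applying the linear map $L_r := (\,\cdot\,)\boxplus_d r$ and using bilinearity, $(1-t)P+tQ = L_r\big((1-t)p+tq\big)$ is real-rooted for every $t\in[0,1]$ by Proposition \ref{prop:preserve}(1). Running the same equivalence backwards shows that $P$ and $Q$ have a common interlacer. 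For the multiplicative statement one argues identically with $L_r := (\,\cdot\,)\boxtimes_d r$ for $r\in\calP_d(\R_{\ge 0})$, invoking Proposition \ref{prop:preserve}(2).

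The hard part, which I expect to be the main obstacle, is ingredient (ii): the order preservation $p\le q\Longrightarrow P\le Q$, a finite analogue of Proposition \ref{prop:monotonicity_free_convolutions}. Convex combinations do not see the ordering of individual roots, and real-rootedness of all convex combinations genuinely fails to force $P\le Q$: for instance $x^2-3x$ and $x^2-3x+2$ have all convex combinations real-rooted yet interlace in neither direction, so (i) must be supplemented by an order-sensitive argument. I would fix the direction by continuity in the convolver $r$. Restricting first to \emph{strict} interlacing $p\lessdot q$, note that $P\neq Q$ for every $r$, since $L_r$ is triangular with unit diagonal in the coordinates $\coef_k$ and hence injective. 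As $r$ ranges over the connected set $\calP_d(\R)$, the pair $(P,Q)$ varies continuously and is real-rooted with a common interlacer by (i), so it interlaces in exactly one of the two directions; this direction can switch only at a value of $r$ where $P$ and $Q$ share a root, i.e.\ where strict interlacing fails. At the trivial convolver $r=x^d$ one has $L_r=\mathrm{id}$ and so $P=p\lessdot q=Q$. The crux is therefore to prove that the interlacing direction is locally constant on $\calP_d(\R)$ — ruling out a flip across the shared-root locus — which I anticipate requires tracking the roots of the real-rooted pencil $(1-t)P+tQ$ and showing they move monotonically, equivalently that the sign of the Wronskian $P'Q-PQ'$ is locally constant in $r$; this is the step that truly exceeds the bilinearity-plus-real-rootedness input of (i). The weak case (with equalities in $p\lessdot q$) then follows by approximating $p,q$ by strictly interlacing pairs and using that $\lessdot$ is closed under limits, and the multiplicative case is identical with $\calP_d(\R_{\ge 0})$ in place of $\calP_d(\R)$ and trivial convolver $r=(x-1)^d$, for which $p\boxtimes_d r=\Dil_1 p=p$.
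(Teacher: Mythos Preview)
The paper states this proposition without proof, offering only the hint that ``$p\lessdot q$ is related to the real-rootedness of linear combinations $ap+bq$.'' Your decomposition into (i) common interlacer and (ii) $P\le Q$ is valid, and (i) is fine as written, but (ii) is not established and your continuity sketch for it rests on a false step. You write that once $P,Q$ have a common interlacer ``it interlaces in exactly one of the two directions,'' and then propose to track that direction over $r\in\calP_d(\R)$. But a common interlacer---equivalently, real-rootedness of all \emph{convex} combinations---does \emph{not} force the dichotomy $P\lessdot Q$ or $Q\lessdot P$: take $P=x^2-4$ and $Q=x^2-1$, where $h=x$ interlaces both and every $(1-t)P+tQ=x^2-(4-3t)$ with $t\in[0,1]$ is real-rooted, yet neither $P\lessdot Q$ nor $Q\lessdot P$. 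So the two-valued invariant your connectedness argument wants to follow is simply not defined from your (i), and the Wronskian/monotonicity program is aimed at the wrong gap.

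The repair is short and makes (ii) unnecessary. Run the pencil over all $t\in\R$ rather than $[0,1]$: since $p\lessdot q$, the full Hermite--Kakeya--Obreschkoff theorem gives $(1-t)p+tq\in\calP_d(\R)$ for every real $t$ (the pencil stays monic of degree $d$), and bilinearity together with Proposition~\ref{prop:preserve} then yields $(1-t)P+tQ\in\calP_d(\R)$ for all $t\in\R$. The HKO converse now gives the genuine dichotomy $P\lessdot Q$ or $Q\lessdot P$. The direction is fixed by a single linear invariant: from Definition~\ref{def:finite-free-convolution}, $\coef_1^{(d)}(P)-\coef_1^{(d)}(Q)=\coef_1^{(d)}(p)-\coef_1^{(d)}(q)$ for $\boxplus_d$, and $\coef_1^{(d)}(P)-\coef_1^{(d)}(Q)=\bigl(\coef_1^{(d)}(p)-\coef_1^{(d)}(q)\bigr)\,\coef_1^{(d)}(r)$ for $\boxtimes_d$ with $\coef_1^{(d)}(r)\ge 0$. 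In either case $\coef_1^{(d)}(P)\le\coef_1^{(d)}(Q)$, which together with $Q\lessdot P$ would force $P=Q$; hence $P\lessdot Q$. Once the dichotomy is in hand your connectedness idea also goes through (since $L_r$ is injective, $P\neq Q$ for all $r$ when $p\neq q$, so the direction cannot flip), but the first-moment check is the one-line finish the paper's hint is gesturing at.
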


For $p \le q$, there exists a family of polynomials $(p_t)_{t \in [0,1]}$ with $p_0 = p$, $p_1 = q$, such that $p_s \lessdot p_t$ for $s \le t$, effectively interpolating $p$ and $q$ through interlacing polynomials.
Hence, we obtain the following result, which is a finite free analogue of Proposition \ref{prop:monotonicity_free_convolutions}.

\begin{proposition}[{\cite[Theorem 4.4]{AFPU}}] \label{prop:OrderPreserving}
If $p, q \in \calP_d (\R)$ and $p \le q$, then
\begin{align*}
r \in \calP_d (\R) &\longimplies p \boxplus_d r \le q \boxplus_d r; \\
r \in \calP_d (\R_{\ge 0}) &\longimplies p \boxtimes_d r \le q \boxtimes_d r.
\end{align*}
\end{proposition}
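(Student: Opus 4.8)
The plan is to reduce the claim for the order $\le$ to the corresponding claim for interlacing, which is exactly Proposition \ref{prop:InterlacingPreserving}, and then to transfer back using the two facts noted after Definition \ref{def:interlacing}: that $p \lessdot q$ implies $p \le q$, and that $\le$ (being defined pointwise on cumulative distribution functions, Definition \ref{def:PartialOrderOnMeasures}) is transitive. The conceptual obstacle is that interlacing itself is \emph{not} transitive, so one cannot hope to produce a single interlacing $p \lessdot q$ from the hypothesis $p \le q$; instead one interpolates through a chain of interlacing steps and invokes transitivity only for the honest partial order $\le$.

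Concretely, I would first build the interpolating chain. Ordering roots non-decreasingly, $p \le q$ is equivalent to $\lambda_i(p) \le \lambda_i(q)$ for all $i$. Define a finite sequence $p = p_0, p_1, \dots, p_d = q$ by the coordinatewise update
\[
\lambda_i(p_{j+1}) = \min\bigl(\lambda_i(q),\, \lambda_{i+1}(p_j)\bigr), \qquad \lambda_{d+1}(p_j) := +\infty .
\]
A direct check (maintaining the invariant $\lambda_i(p_j) \le \lambda_i(q)$) shows that each $p_j \in \calP_d(\R)$, that consecutive members satisfy $p_j \lessdot p_{j+1}$ — the update never lets a root pass the current position of its right neighbour, which is precisely the interlacing condition — and that after $d$ steps every root has reached its target, so $p_d = q$. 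If moreover $p, q \in \calP_d(\R_{\ge 0})$, the update stays in $\calP_d(\R_{\ge 0})$, which is what the multiplicative case requires.

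Next I would push the convolution through this chain. Fix $r \in \calP_d(\R)$. For each $j$, Proposition \ref{prop:InterlacingPreserving} turns $p_j \lessdot p_{j+1}$ into $p_j \boxplus_d r \lessdot p_{j+1} \boxplus_d r$, whence $p_j \boxplus_d r \le p_{j+1} \boxplus_d r$. Transitivity of $\le$ then gives
\[
p \boxplus_d r = p_0 \boxplus_d r \le p_1 \boxplus_d r \le \dots \le p_d \boxplus_d r = q \boxplus_d r,
\]
which is the additive assertion. For the multiplicative statement I would repeat this verbatim with $r \in \calP_d(\R_{\ge 0})$, using the second implication of Proposition \ref{prop:InterlacingPreserving} to obtain $p_j \boxtimes_d r \lessdot p_{j+1} \boxtimes_d r$ and hence $p \boxtimes_d r \le q \boxtimes_d r$.

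I expect the only delicate point to be the verification that the chain genuinely consists of interlacing steps; once Proposition \ref{prop:InterlacingPreserving} is granted, the passage to $\le$ is purely formal, and the non-transitivity of $\lessdot$ causes no difficulty because it is applied only to consecutive members of the chain before being upgraded to the transitive relation $\le$.
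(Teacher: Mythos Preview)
Your proposal is correct and matches the paper's own approach, sketched immediately before the proposition: interpolate between $p$ and $q$ through a chain of interlacing polynomials, apply Proposition~\ref{prop:InterlacingPreserving} to each link, and then pass to the transitive relation $\le$. One minor point: your remark that the multiplicative case requires the intermediate $p_j$ to lie in $\calP_d(\R_{\ge 0})$ is unnecessary, since Proposition~\ref{prop:InterlacingPreserving} only asks that $r \in \calP_d(\R_{\ge 0})$, so the chain may stay in $\calP_d(\R)$.
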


Identifying a polynomial $p \in \calP_d (\R)$ with its root distribution $\meas{p}$, we write
\[
\cdf_p (x) := \cdf_{\meas{p}} (x) \qquad \text{ for } x\in\R.
\]
Similarly, for $p, q \in \calP_d (\R)$, we write $d_K (p,q) = d_K (\meas{p}, \meas{q})$ and $d_L (p,q) = d_L (\meas{p}, \meas{q})$.

Using the above monotonicity of finite free convolutions, we can provide an alternative proof of the bounds on the roots of convoluted polynomials in \cite[Theorems 1.3 and 1.6]{MSS22} in the same as the proof of Corollary \ref{cor:support_bounds_free_convolutions}.
In the following, for $p \in \calP_d(\R)$, let $\lambda_{\mathrm{max}}(p)$  denote the maximum roots of $p$.

\begin{corollary} [{\cite[Theorems 1.3 and 1.6]{MSS22}}] \label{cor:minRootsFiniteFreeMultiplicative}
Given $p, q \in \calP_d(\R)$, we have $\lambda_{\mathrm{max}}(p\boxplus_d q) \le \lambda_{\mathrm{max}}(p) + \lambda_{\mathrm{max}}(q)$.
If $p, q \in \calP_d(\R_{\ge 0})$, then $\lambda_{\mathrm{max}}(p\boxtimes_d q) \le \lambda_{\mathrm{max}}(p)\lambda_{\mathrm{max}}(q)$.
\end{corollary}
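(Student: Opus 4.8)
The plan is to transcribe the proof of Corollary~\ref{cor:support_bounds_free_convolutions} into the finite free setting, replacing the free monotonicity (Proposition~\ref{prop:monotonicity_free_convolutions}) by its finite counterpart (Proposition~\ref{prop:OrderPreserving}) and the Dirac masses $\delta_c$ by the polynomials $(x-c)^d$, whose empirical root distribution is exactly $\delta_c$. The first ingredient is the elementary dictionary between the maximal root and the partial order of Definition~\ref{def:PartialOrderOnMeasures}: if $\lambda_{\mathrm{max}}(p) \le a$, then all roots of $p$ lie in $(-\infty, a]$, so $\cdf_p(x) \ge \cdf_{(x-a)^d}(x)$ for every $x$ (both sides equal $1$ for $x \ge a$, and the right-hand side vanishes for $x < a$), which is precisely $p \le (x-a)^d$. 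Conversely, if a polynomial $r$ satisfies $r \le (x-c)^d$, then $\cdf_r(c) \ge \cdf_{(x-c)^d}(c) = 1$, forcing $\lambda_{\mathrm{max}}(r) \le c$. Thus it suffices to bound $p \boxplus_d q$ and $p \boxtimes_d q$ from above in this order.

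For the additive case, put $a = \lambda_{\mathrm{max}}(p)$ and $b = \lambda_{\mathrm{max}}(q)$, so that $p \le (x-a)^d$ and $q \le (x-b)^d$. Two applications of Proposition~\ref{prop:OrderPreserving}, together with commutativity of $\boxplus_d$, give
\[
p \boxplus_d q \le (x-a)^d \boxplus_d q \le (x-a)^d \boxplus_d (x-b)^d.
\]
The shift identity recalled in Section~\ref{sec:finitefree} evaluates the rightmost polynomial as $(x-a)^d \boxplus_d (x-b)^d = [\Shi_b((x-a)^d)](x) = (x-a-b)^d$. Hence $p \boxplus_d q \le (x-a-b)^d$, and the converse half of the dictionary yields $\lambda_{\mathrm{max}}(p \boxplus_d q) \le a + b = \lambda_{\mathrm{max}}(p) + \lambda_{\mathrm{max}}(q)$.

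The multiplicative case runs along the identical chain, with the shift identity replaced by the dilation identity $(x-a)^d \boxtimes_d (x-b)^d = [\Dil_b((x-a)^d)](x) = (x-ab)^d$. The only point needing attention is that the monotonicity in Proposition~\ref{prop:OrderPreserving} for $\boxtimes_d$ requires the fixed factor to belong to $\calP_d(\R_{\ge 0})$. Since $p, q \in \calP_d(\R_{\ge 0})$ we have $a, b \ge 0$, so both $q$ and $(x-a)^d$ lie in $\calP_d(\R_{\ge 0})$, and the estimates
\[
p \boxtimes_d q \le (x-a)^d \boxtimes_d q \le (x-a)^d \boxtimes_d (x-b)^d
\]
are legitimate; the degenerate value $b = 0$ (where $q = x^d$) is handled by the convention $\Dil_0 p = x^d$. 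Concluding as before gives $\lambda_{\mathrm{max}}(p \boxtimes_d q) \le ab$.

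I do not anticipate a genuine obstacle here: the argument is a faithful copy of the free-probabilistic proof, so the only things to watch are the bookkeeping of the order direction (recall that the smaller polynomial in this partial order has the larger CDF) and the non-negativity hypothesis in the multiplicative step, both of which are routine.
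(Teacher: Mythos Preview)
Your proof is correct and follows essentially the same approach as the paper, which explicitly indicates (just before the corollary) that the argument is obtained by transcribing the proof of Corollary~\ref{cor:support_bounds_free_convolutions} using Proposition~\ref{prop:OrderPreserving} in place of Proposition~\ref{prop:monotonicity_free_convolutions}. Your explicit dictionary between the partial order and the maximal root, and your attention to the non-negativity hypothesis in the multiplicative step, fill in exactly the details the paper leaves to the reader.
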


Arizmendi and Perales construed the theory of finite free cumulants \cite{AP} inspired by the work of finite $R$-transform by Marcus \cite{Marcus}, and they obtained the convergence of finite free additive convolution $\boxplus_d$ to free additive convolution $\boxplus$ under the assumption that the limiting measures have compact support.
\begin{proposition}[{\cite[Corollary 5.5]{AP}}] \label{prop:convergence-finite-free-additive}
Let $\mu$ and $\nu$ be real probability measures with compact support.
For $d\in\N$, let $p_d$ and $q_d$ be monic polynomials of degree $d$.
If, as $d\to\infty$, $\meas{p_d} \weakto \mu$ and $\meas{q_d} \weakto \nu$, then we have the weak convergence $\meas{p_d \boxplus_d q_d} \weakto \mu \boxplus \nu$.
\end{proposition}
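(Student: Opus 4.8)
The plan is to prove this via the method of moments, using finite free cumulants as the bridge that linearizes $\boxplus_d$. Since $\mu$ and $\nu$ are compactly supported, applying Corollary \ref{cor:support_bounds_free_convolutions} to them (and to their reflected measures) shows that $\mu \boxplus \nu$ is compactly supported as well, hence is uniquely determined by its moment sequence $(m_n(\mu \boxplus \nu))_n$. By the method of moments it then suffices to prove that, for each fixed $n$, $m_n(\meas{p_d \boxplus_d q_d}) \to m_n(\mu \boxplus \nu)$ as $d \to \infty$; the compact support of the limit provides the tightness needed to upgrade moment convergence to the asserted weak convergence.

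For the bridge, let $\kappa_n^{(d)}(p)$ denote the finite free cumulants of \cite{AP}, the coefficients of Marcus's finite $R$-transform. Two facts drive the argument. First, these cumulants linearize $\boxplus_d$, i.e.\ $\kappa_n^{(d)}(p \boxplus_d q) = \kappa_n^{(d)}(p) + \kappa_n^{(d)}(q)$, which is immediate from the exponential (binomial-convolution) structure of the coefficients in \eqref{eq:finite-free-additive}. Second, the finite free moment--cumulant relation expresses each $m_n(\meas{p})$ as a polynomial in $\kappa_1^{(d)}(p), \dots, \kappa_n^{(d)}(p)$ with $d$-dependent coefficients that converge, as $d \to \infty$, to the free moment--cumulant relation indexed by non-crossing partitions. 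Because this relation is triangular and invertible in the limit, moment convergence $m_n(\meas{p_d}) \to m_n(\mu)$ is equivalent to cumulant convergence $\kappa_n^{(d)}(p_d) \to \kappa_n(\mu)$, the free cumulants of $\mu$. Granting moment convergence for $p_d$ and $q_d$, I would then chain
\[
\kappa_n^{(d)}(p_d \boxplus_d q_d) = \kappa_n^{(d)}(p_d) + \kappa_n^{(d)}(q_d) \longrightarrow \kappa_n(\mu) + \kappa_n(\nu) = \kappa_n(\mu \boxplus \nu),
\]
using additivity of free cumulants under $\boxplus$ at the last step, and read off $m_n(\meas{p_d \boxplus_d q_d}) \to m_n(\mu \boxplus \nu)$ by running the finite moment--cumulant relation backwards.

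The main obstacle is the step quietly assumed above: promoting weak convergence $\meas{p_d} \weakto \mu$ to convergence of every moment. This is not automatic, since a vanishing fraction of the roots of $p_d$ could drift to infinity and corrupt the higher moments while leaving the weak limit unchanged. In the compact-support regime the natural remedy is to assume the roots lie in a common interval $[-R,R]$, whereupon weak convergence forces $m_n(\meas{p_d}) \to m_n(\mu)$ for all $n$, since each $x \mapsto x^n$ is bounded continuous there. Without such a uniform bound one must truncate --- replacing $p_d$ by a cut polynomial and controlling the discarded mass --- which is exactly the device the paper later develops to dispense with compactness in Theorem \ref{thm:main3}. I expect the only genuinely computational part to be verifying that the $d$-dependent coefficients in the finite moment--cumulant relation converge to the non-crossing ones, which is routine bookkeeping.
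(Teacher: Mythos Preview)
The paper does not supply its own proof of this proposition: it is quoted verbatim as \cite[Corollary~5.5]{AP} and used as a black box in the proof of Theorem~\ref{thm:main3}. Your outline via finite free cumulants is exactly the method of Arizmendi--Perales in \cite{AP}, so you have correctly reconstructed the cited argument rather than found an alternative.

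Your identification of the gap is accurate and worth stressing. The statement as recorded in the paper assumes only that $\mu,\nu$ are compactly supported, not that the roots of $p_d,q_d$ lie in a common compact set, so the passage from $\meas{p_d}\weakto\mu$ to $m_n(\meas{p_d})\to m_n(\mu)$ is genuinely not automatic. The truncation remedy you propose does close the gap, but observe what it costs: to compare $p_d\boxplus_d q_d$ with $(p_d)_a\boxplus_d(q_d)_a$ you need the Kolmogorov-distance monotonicity of Theorem~\ref{thm:main2}. That theorem is proved independently of the present proposition (it rests only on interlacing and Proposition~\ref{prop:OrderPreserving}), so there is no circularity; but the argument you end up with is then essentially the proof of Theorem~\ref{thm:main3} specialized to compactly supported limits, with the cumulant computation standing in for the appeal to Proposition~\ref{prop:convergence-finite-free-additive} at the core. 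In other words, once one has the tools of this paper, the distinction between the compact and non-compact cases largely evaporates, which is part of the paper's point.
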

In the next work \cite{AGVP}, Arizmendi, Perales, and Garza-Vargas obtained the multiplicative formula for finite free cumulants and then proved the convergence of $\boxtimes_d$ to $\boxtimes$ under a similar condition.
\begin{proposition}[{\cite[Theorem 1.4]{AGVP}}]\label{prop:convergence-finite-free-multiplicative}
Let $\mu$ and $\nu$ be probability measures supported on a compact subset of the real line.
Let $(p_d)_{d=1}^\infty$ and $(q_d)_{d=1}^\infty$ be sequences of real-rooted polynomials, and assume that the $q_d$ have only non-negative roots.
If the empirical root distributions of these sequences of polynomials converge weakly to $\mu$ and $\nu$ respectively, then the empirical root distributions of the sequence $(p_d \boxtimes_d q_d)_{d=1}^\infty$ converge weakly to $\mu \boxtimes \nu$.
\end{proposition}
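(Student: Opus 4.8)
The plan is to reduce weak convergence to convergence of all moments and to run the argument at the level of finite free cumulants, in parallel with the proof of Proposition~\ref{prop:convergence-finite-free-additive}. First I would fix a common compact set containing all relevant supports: since $\mu,\nu$ are compactly supported with $\meas{p_d}\weakto\mu$ and $\meas{q_d}\weakto\nu$, the extreme roots of $p_d$ and $q_d$ stay bounded for large $d$, and combining Corollary~\ref{cor:minRootsFiniteFreeMultiplicative} with its reflected and reversed analogues confines every $\meas{p_d\boxtimes_d q_d}$ to a fixed compact $K\subset\R$. On $\calM(K)$ weak convergence is equivalent to convergence of each moment $m_n(\cdot)=\int x^n\,d(\cdot)$, so it suffices to show $m_n(\meas{p_d\boxtimes_d q_d})\to m_n(\mu\boxtimes\nu)$ for every fixed $n\in\N$.

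Next I would pass to the finite free cumulants $\ffc_n^{(d)}$ of Arizmendi--Perales. These are universal polynomials in the normalized coefficients $\coef_1^{(d)},\dots,\coef_n^{(d)}$ (equivalently, in the first $n$ moments), and the finite free moment--cumulant relations converge coefficientwise as $d\to\infty$ to the free moment--cumulant relations summed over noncrossing partitions. Hence, under the uniform support bound, $\meas{p_d}\weakto\mu$ forces $\ffc_n^{(d)}(p_d)\to\ffc_n(\mu)$ for each $n$, where $\ffc_n(\mu)$ denotes the $n$-th free cumulant of $\mu$, and similarly $\ffc_n^{(d)}(q_d)\to\ffc_n(\nu)$.

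The heart of the argument is the multiplicative cumulant formula for $\boxtimes_d$ established in \cite{AGVP}, which expresses $\ffc_n^{(d)}(p\boxtimes_d q)$ as a finite sum of products $\ffc_\pi^{(d)}(p)\,\ffc_\sigma^{(d)}(q)$ over pairs of noncrossing partitions, weighted by factors rational in $d$. As $d\to\infty$ each such weight converges to the corresponding coefficient in the free multiplicative cumulant formula (the combinatorial expansion underlying the $S$-transform identity for $\boxtimes$), while the contributions carrying negative powers of $d$ vanish. Feeding in the convergences $\ffc_j^{(d)}(p_d)\to\ffc_j(\mu)$ and $\ffc_j^{(d)}(q_d)\to\ffc_j(\nu)$ then gives $\ffc_n^{(d)}(p_d\boxtimes_d q_d)\to\ffc_n(\mu\boxtimes\nu)$, whence the moments converge and the claim follows.

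The main obstacle is exactly the uniform control of these $d$-dependent weights: one must verify that the finite free multiplicative cumulant formula degenerates precisely to the free one and that the error terms are $O(1/d)$ uniformly over the bounded cumulant data, so that the term-by-term passage to the limit is legitimate. A secondary technical point is the reduction in the first step, where the hypothesis $q_d\in\calP_d(\R_{\ge0})$ is used both to guarantee that $\mu\boxtimes\nu$ is defined (the free multiplicative convolution requires $\nu\in\calM(\R_{\ge0})$) and to obtain the multiplicative support bound; the possible atom of $\nu$ at $0$ must be handled with the care already visible in Proposition~\ref{prop:atom_regularity_multiplicative}.
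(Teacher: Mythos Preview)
The paper does not give its own proof of this proposition; it is quoted from \cite{AGVP}, and your outline is indeed the cumulant-based strategy of that reference. The paper's own contribution in Section~\ref{sec:approx_convolutions} runs in the opposite direction: it takes Proposition~\ref{prop:convergence-finite-free-multiplicative} (or its $\R_{\ge0}$ cousin, Proposition~\ref{prop:convergenceFiniteFreeMultiplicative}) as a black-box base case and then removes the compactness hypothesis via the truncation-plus-distance-monotonicity machinery of Theorem~\ref{thm:monotonicity-finite-free-multiplicative}.

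That said, your first step has a genuine gap. Weak convergence $\meas{p_d}\weakto\mu$ with $\mu$ compactly supported does \emph{not} force the extreme roots of $p_d$ to remain bounded: for $p_d(x)=x^{d-1}(x-d)$ one has $\meas{p_d}\weakto\delta_0$ while $\lambda_{\mathrm{max}}(p_d)=d\to\infty$. In that situation the moments need not converge either---here $m_2(\meas{p_d})=d\to\infty$---so the reduction ``weak convergence on a fixed compact set $\Leftrightarrow$ moment convergence'' is unavailable and the cumulant argument cannot be launched as written. The repair is precisely the kind of step the present paper supplies: first replace $p_d,q_d$ by their truncations $(p_d)_a,(q_d)_a$, control the discrepancy in Kolmogorov distance via Theorem~\ref{thm:monotonicity-finite-free-multiplicative}, and only then run the moment/cumulant machinery of \cite{AGVP} on the now uniformly bounded sequences. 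Alternatively, strengthen the hypothesis to a uniform support bound, which is what the moment method genuinely requires.
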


Recently, Arizmendi et al. \cite{AFPU} constructed the finite $S$-transform for $\calP_d(\R_{\ge 0})$.
As a result, for the case where both $p_d, q_d \in \calP_d(\R_{\ge 0})$, the condition on compact supports can be dropped for the multiplicative convolution.

\begin{proposition}[{\cite[Proposition 10.1]{AFPU}}] \label{prop:convergenceFiniteFreeMultiplicative}
Let $p_d, q_d \in \calP_d (\R_{\ge 0})$ and assume their root distributions $\meas{p_d}$ and $\meas{q_d}$ converge weakly to $\mu, \nu \in \calM (\R_{\ge 0})$ as $d\to\infty$, respectively.
Then, the empirical root distributions $\meas{p_d \boxtimes_d q_d}$ converge weakly to $\mu \boxtimes \nu$ as $d\to\infty$.
\end{proposition}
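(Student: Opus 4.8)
The plan is to deduce the general (non-compact) case from the compactly supported case, which is already available as Proposition~\ref{prop:convergence-finite-free-multiplicative}, by a truncation argument in which the truncation error is controlled uniformly in $d$. Fix $a>0$ and form the cut-up polynomials $p_d|^a, q_d|^a \in \calP_d([0,a])$ together with the cut-up measures $\mu|^a, \nu|^a \in \calM([0,a])$. Since $\meas{p_d|^a} \weakto \mu|^a$ and $\meas{q_d|^a} \weakto \nu|^a$ with all measures supported on the compact set $[0,a]$, Proposition~\ref{prop:convergence-finite-free-multiplicative} gives $\meas{p_d|^a \boxtimes_d q_d|^a} \weakto \mu|^a \boxtimes \nu|^a$ as $d \to \infty$. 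It then remains to estimate two errors, the finite-free error $\|\cdf_{p_d \boxtimes_d q_d} - \cdf_{p_d|^a \boxtimes_d q_d|^a}\|_\infty$ and the free error $d_K(\mu|^a \boxtimes \nu|^a, \mu \boxtimes \nu)$, and to show that both become small as $a \to \infty$.

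The free error is immediate from Proposition~\ref{prop:monotonicity_distance_multiplicative_kolmogorov}: as all four measures lie in $\calM(\R_{\ge 0})$,
\[
d_K(\mu|^a \boxtimes \nu|^a,\ \mu \boxtimes \nu) \le d_K(\mu|^a, \mu) + d_K(\nu|^a, \nu) = \mu((a,\infty)) + \nu((a,\infty)),
\]
where the final equality rests on the elementary computation $d_K(\mu|^a,\mu) = \sup_{x \ge a}(1 - \cdf_\mu(x)) = \mu((a,\infty))$; this tends to $0$ as $a \to \infty$.

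The heart of the matter is the finite-free error, which I would control through interlacing. The point is that $p_d|^a$ is obtained from $p_d$ by repeatedly replacing the current largest root exceeding $a$ by $a$, and each such single-root move produces an interlacing pair (the new polynomial $\lessdot$ the old one); there are exactly $k_d^p := \#\{i : \lambda_i(p_d) > a\}$ such moves. By Proposition~\ref{prop:InterlacingPreserving}, interlacing is preserved after convolving with the fixed $q_d \in \calP_d(\R_{\ge 0})$, and two interlacing degree-$d$ polynomials have CDFs differing by at most $1/d$ pointwise; chaining the $k_d^p$ steps yields $\|\cdf_{p_d \boxtimes_d q_d} - \cdf_{p_d|^a \boxtimes_d q_d}\|_\infty \le k_d^p/d$. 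Repeating the argument in the second variable, now with the fixed argument $p_d|^a \in \calP_d(\R_{\ge 0})$, and using the triangle inequality gives
\[
\|\cdf_{p_d \boxtimes_d q_d} - \cdf_{p_d|^a \boxtimes_d q_d|^a}\|_\infty \le \frac{k_d^p + k_d^q}{d} = \meas{p_d}((a,\infty)) + \meas{q_d}((a,\infty)).
\]

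Finally I would assemble the three pieces by a triangle inequality at a point $x$. Choosing $a$ among the continuity points of both $\cdf_\mu$ and $\cdf_\nu$, one has $\meas{p_d}((a,\infty)) \to \mu((a,\infty))$ and $\meas{q_d}((a,\infty)) \to \nu((a,\infty))$ as $d \to \infty$, so the finite-free error has $\limsup_d$ at most $\mu((a,\infty)) + \nu((a,\infty))$. For $x$ in the co-countable, hence dense, set of common continuity points of $\cdf_{\mu\boxtimes\nu}$ and of all $\cdf_{\mu|^{a}\boxtimes\nu|^{a}}$ along a fixed sequence $a \to \infty$, the middle term $|\cdf_{p_d|^a \boxtimes_d q_d|^a}(x) - \cdf_{\mu|^a \boxtimes \nu|^a}(x)|$ vanishes as $d \to \infty$ by Proposition~\ref{prop:convergence-finite-free-multiplicative}, whence $\limsup_d |\cdf_{p_d \boxtimes_d q_d}(x) - \cdf_{\mu\boxtimes\nu}(x)| \le 2\bigl(\mu((a,\infty)) + \nu((a,\infty))\bigr)$; letting $a \to \infty$ gives CDF convergence on a dense set, i.e.\ the desired weak convergence. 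I expect the finite-free error estimate to be the main obstacle: because $\boxtimes_d$ amplifies large roots, mass can escape to infinity, and the essential ingredient is the uniform-in-$d$ bound $(k_d^p+k_d^q)/d$ extracted from the interlacing-preservation property, which is exactly what transfers the compactly supported result to the general setting.
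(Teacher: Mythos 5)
Your proof is correct and essentially coincides with the paper's own route: the paper obtains this statement (as Theorem \ref{thm:convergence_multiplicative}, proved ``by a similar argument'' to Theorem \ref{thm:main3}) via the identical truncation scheme --- cut both sequences at a large continuity point $a$, invoke the compact-support result Proposition \ref{prop:convergence-finite-free-multiplicative}, and bound the free and finite-free truncation errors by Kolmogorov monotonicity (Proposition \ref{prop:monotonicity_distance_multiplicative_kolmogorov} and Theorem \ref{thm:monotonicity-finite-free-multiplicative}). Your hand-built interlacing chain for the finite-free error is exactly the mechanism behind Lemma \ref{lem:pan} and Theorem \ref{thm:monotonicity-finite-free-multiplicative} (so it re-derives rather than replaces the paper's key estimate), and assembling the bounds pointwise on a dense set of continuity points rather than in the L\'evy distance is only a cosmetic difference.
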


More recently, Jalowy, Kabluchko, and Marynych \cite{JKM} dealt with these kinds of problems in \cite[Theorems 3.5 and 3.7]{JKM} by using exponential profiles; however, they also required similar constraints on the support.

\section{Atoms for finite free convolutions} \label{sec:atoms}

This section investigates the nature of atoms in finite free convolutions, providing detailed proofs for the conditions governing their existence and multiplicities.
We begin in Section \ref{sec:atoms-multiplicative} with the multiplicative convolution $\boxtimes_d$, building upon the foundational work of Kostov and Shapiro on the Schur--Szeg\H{o} composition \cite{KS}.
While reviewing their key insights, we also present alternative derivations for several results using the framework of finite free probability (with Proposition \ref{prop:simple-finite-free-multiplicative} being an exception where their original arguments are noted).
Subsequently, Section \ref{sec:atoms-additive} establishes analogous theorems for the additive convolution $\boxplus_d$, largely by adapting the methodologies employed for the multiplicative case.

\subsection{Finite free multiplicative convolution}
\label{sec:atoms-multiplicative}
Interestingly, Kostov and Shapiro already studied the atoms of the Schur-Szeg\H{o} composition in 2006 \cite{KS}.
In this subsection, we review their work from the viewpoint of finite free probability.
Given polynomials of degree $d$,
\[  p (x) = \sum_{k=0}^d \binom{d}{k} a_k x^k \quad \text{and} \quad q(x) = \sum_{k=0}^d \binom{d}{k} b_k x^k,\]
the Schur-Szeg\H{o} composition is given by
\[  p * q (x) = \sum_{k=0}^d \binom{d}{k} a_k b_k x^k.\]
They were interested in the roots of the Schur-Szeg\H{o} composition $p * q$ and obtained helpful results for our goal concerning atom information for finite free convolutions.
The connection to finite free multiplicative convolution is clear as follows: $\widehat{p \boxtimes_d q} = p * q$ under appropriate identification of coefficients.
Thus, we can interpret their results in terms of finite free multiplicative convolutions.
For the reader's convenience, we restate Kostov and Shapiro's relevant results in our terminology.

\begin{proposition}[Theorem 1.4 in \cite{KS}] \label{prop:boxtimes-atom}
Given two polynomials $p$ and $q$ in $\calP_d (\C)$ such that $\alpha \ne 0$ and $\beta \ne 0$ are roots respectively of $p$ and $q$ of multiplicity $m_\alpha^p$ and $m_\beta^q$ with $m = m_\alpha^p + m_\beta^q - d \ge 0$, one has that $\alpha \beta$ is a root of $p \boxtimes_d q$ of multiplicity $m$.
(If $m=0$, $\alpha \beta$ is not a root of $p \boxtimes_d q$.)
\end{proposition}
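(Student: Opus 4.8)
The plan is to represent $\boxtimes_d$ as an average over the symmetric group of the naive ``root-wise product'' polynomials, and then to read off the multiplicity of $\alpha\beta$ directly from this representation. Writing $\lambda_1,\dots,\lambda_d$ for the roots of $p$ and $\mu_1,\dots,\mu_d$ for those of $q$, the first step is to establish the symmetrization identity
\[
p \boxtimes_d q(x) = \frac{1}{d!}\sum_{\sigma\in\mathfrak{S}_d}\prod_{i=1}^d\bigl(x-\lambda_i\mu_{\sigma(i)}\bigr),
\]
where $\mathfrak{S}_d$ is the symmetric group on $\{1,\dots,d\}$. This is immediate from the definition $\coef_k^{(d)}(p\boxtimes_d q)=\coef_k^{(d)}(p)\coef_k^{(d)}(q)$: for a fixed $k$-element set $S$, the image $\sigma(S)$ is uniformly distributed among $k$-element subsets, so averaging $\prod_{i\in S}\mu_{\sigma(i)}$ over $\sigma$ gives $\coef_k^{(d)}(q)$, and summing over $S$ recovers $\coef_k^{(d)}(p)\coef_k^{(d)}(q)$; since $\coef_k^{(d)}$ is linear in the coefficient vector, it commutes with the averaging and the identity follows.

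Write $\gamma=\alpha\beta$ and let $A=\{i:\lambda_i=\alpha\}$ and $B=\{j:\mu_j=\beta\}$, so that $|A|=m_\alpha^p$ and $|B|=m_\beta^q$. For each $\sigma$, the polynomial $f_\sigma(x):=\prod_i(x-\lambda_i\mu_{\sigma(i)})$ vanishes at $\gamma$ to order at least $|A\cap\sigma^{-1}(B)|$, and the pigeonhole bound $|A\cap\sigma^{-1}(B)|\ge|A|+|B|-d=m$ holds for every $\sigma$. Hence $(x-\gamma)^m$ divides each $f_\sigma$ and therefore divides their average $p\boxtimes_d q$; this gives the lower bound that $\gamma$ is a root of multiplicity at least $m$.

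The delicate part, and the step I expect to be the main obstacle, is ruling out that cancellation in the average pushes the multiplicity strictly above $m$. I would evaluate the $m$-th derivative, $\frac{d^m}{dx^m}(p\boxtimes_d q)(\gamma)=\frac{1}{d!}\sum_\sigma f_\sigma^{(m)}(\gamma)$, and observe that a term vanishes unless $\gamma$ has multiplicity exactly $m$ in $f_\sigma$, which by the equality case of the pigeonhole bound is equivalent to $\sigma(A^c)\subseteq B$. The crucial structural observation is that for every such $\sigma$ the $d-m$ ``unmatched'' factors are exactly $\beta(\alpha-\lambda_i)$ over the indices $i\notin A$ and $\alpha(\beta-\mu_{\sigma(i)})$ over the surviving indices of $A$, on which $\sigma$ restricts to a bijection onto $B^c$. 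Because $\alpha,\beta\neq0$ and $\alpha,\beta$ occur with multiplicity \emph{exactly} $m_\alpha^p,m_\beta^q$ (so $\lambda_i\neq\alpha$ and $\mu_j\neq\beta$ off $A,B$), each factor is nonzero, and, more importantly, each admissible $\sigma$ contributes the identical nonzero value
\[
V=m!\,\beta^{\,d-m_\alpha^p}\alpha^{\,d-m_\beta^q}\prod_{\lambda_i\neq\alpha}(\alpha-\lambda_i)\prod_{\mu_j\neq\beta}(\beta-\mu_j).
\]
Thus $\frac{d^m}{dx^m}(p\boxtimes_d q)(\gamma)=\frac{N}{d!}\,V$, where $N\ge1$ counts the permutations with $\sigma(A^c)\subseteq B$; this is nonzero, so the multiplicity of $\gamma$ is exactly $m$ (and for $m=0$ it confirms that $\gamma$ is not a root), completing the argument. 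The essential insight that defeats the apparent cancellation is that all surviving permutations contribute the same value $V$, so the sum cannot vanish.
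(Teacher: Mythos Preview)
Your proof is correct. The symmetrization identity is verified exactly as you indicate, the pigeonhole lower bound is clear, and the key computation of $f_\sigma^{(m)}(\gamma)$ for the surviving permutations is carried out correctly: once $\sigma(A^c)\subseteq B$, every index $i$ lies in $A$ or in $\sigma^{-1}(B)$, so each remaining factor at $x=\gamma$ is either $\beta(\alpha-\lambda_i)$ or $\alpha(\beta-\mu_{\sigma(i)})$, all nonzero by the hypotheses $\alpha,\beta\neq 0$ and by the definition of $A,B$. This forces the multiplicity of $\gamma$ in $f_\sigma$ to be \emph{exactly} $m$ for such $\sigma$ (no accidental extra coincidences can occur), and since $\sigma$ restricted to $A\cap\sigma^{-1}(B^c)$ is a bijection onto $B^c$, every admissible $\sigma$ contributes the identical nonzero value $V$. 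The count $N$ of admissible permutations is $\dfrac{m_\beta^q!\,m_\alpha^p!}{m!}>0$, so the $m$-th derivative does not vanish. The case $m=0$ is subsumed in the same computation.

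Your route is genuinely different from the paper's. The paper reduces by dilation to $\alpha=\beta=1$ and works in the basis $r^{(k)}=(\mathcal{D}_x)^k(x-1)^d$ of $\C_d[x]$, where $\mathcal{D}_x=\tfrac{xD}{d}$; since $r^{(k)}$ has a root at $1$ of multiplicity $d-k$, the multiplicity of $p$ at $1$ reads off the top nonzero coefficient in the expansion $p=\sum a_k r^{(k)}$, and the relation $p\boxtimes_d q=(PQ)(\mathcal{D}_x)(x-1)^d$ converts the problem into a statement about the degree of the product $PQ$. Your argument is purely combinatorial via the permutation-average formula for $\boxtimes_d$; it avoids introducing the operator $\mathcal{D}_x$ and the auxiliary basis, and gives an explicit closed form for $\tfrac{d^m}{dx^m}(p\boxtimes_d q)(\alpha\beta)$ in terms of the roots. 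The paper's approach, on the other hand, isolates a structural fact (multiplicity at $1$ $\leftrightarrow$ degree of the coordinate polynomial) that is reusable elsewhere and requires no case analysis. Both are short; yours is more elementary, the paper's more conceptual.
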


In the proposition's setting, if $m>0$, we call $\alpha \beta$ a \textit{trivial root} of $p \boxtimes_d q$, and the triplet $(\alpha, \beta, \alpha \beta)$ is termed an \textit{atom triplet} of $(p, q, p \boxtimes_d q)$. This corresponds to what Kostov and Shapiro called ``A-roots'' \cite{KS}.
Also, if $p$ and $q$ has a root at the origin with multiplicity $m^p_0$ and $m^q_0$, then $p \boxtimes_d q$ also has a root at the origin with multiplicity $\max(m^p_0, m^q_0)$ (since $\coef_d(p \boxtimes_d q) = \coef_d(p)\coef_d(q)$, etc.); such roots at the origin are also considered trivial.
The other roots of $p \boxtimes_d q$ are said to be \textit{non-trivial} (``B-roots'' in \cite{KS}).
The proof of Proposition \ref{prop:boxtimes-atom} in \cite{KS} involved direct computation with coefficients, similar to the proof of Proposition \ref{prop:boxplus_atom} in this paper.
However, we present an alternative proof here for completeness and to showcase the utility of finite free probability techniques.

For this alternative proof, we review an expression for $\boxtimes_d$ using a differential operator approach, cf. \cite{MSS22, Mirabelli}.
Let $D = d/dx$. Define $r_d^{(0)} (x) = (x-1)^d$, $r_d^{(1)} (x) = x(x-1)^{d-1}$, and $r_d^{(k)} = (r_d^{(1)})^{\boxtimes_d k} = (x(x-1)^{d-1})^{\boxtimes_d k}$ for $k \ge 2$.
For a fixed degree $d$, these polynomials $\{r_d^{(k)}\}_{k=0}^\infty$ are denoted simply by $\{r^{(k)}\}_{k=0}^\infty$.
Clearly, $r^{(k)} \boxtimes_d r^{(l)} = r^{(k+l)}$ for any $k, l \ge 0$.

\begin{lemma} \label{lem:uniqueExpression}
The set of polynomials $\{r^{(k)}\}_{k=0}^d$ forms a basis for the $\C$-vector space $\C_d [x]$.
\end{lemma}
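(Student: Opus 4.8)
The plan is to reduce the claim to the nonsingularity of a Vandermonde matrix by computing the normalized coefficients $\coef_j^{(d)}(r^{(k)})$ explicitly. Since $\C_d[x]$ has dimension $d+1$ and we are handed exactly the $d+1$ polynomials $r^{(0)}, \dots, r^{(d)}$, it suffices to prove their linear independence. The crucial structural observation is that the coefficient of $x^{d-k}$ in any $p \in \C_d[x]$ equals $(-1)^k \binom{d}{k} \coef_k^{(d)}(p)$ with $(-1)^k\binom{d}{k} \neq 0$, so the assignment $p \mapsto \bigl(\coef_0^{(d)}(p), \dots, \coef_d^{(d)}(p)\bigr)$ is a linear isomorphism from $\C_d[x]$ onto $\C^{d+1}$. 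Hence it is equivalent to show that the coefficient vectors of the $r^{(k)}$ are linearly independent in $\C^{d+1}$.

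First I would compute the coefficients of the generator $r^{(1)}(x) = x(x-1)^{d-1}$. Its roots are $0$ with multiplicity one and $1$ with multiplicity $d-1$; since any elementary symmetric product containing the root $0$ vanishes, the $j$-th elementary symmetric function of the roots is $\binom{d-1}{j}$, whence
\[
\coef_j^{(d)}(r^{(1)}) = \binom{d}{j}^{-1}\binom{d-1}{j} = \frac{d-j}{d}, \qquad 0 \le j \le d.
\]
Then, by the multiplicativity of $\boxtimes_d$ in Definition \ref{def:finite-free-convolution} together with $r^{(k)} = (r^{(1)})^{\boxtimes_d k}$, the coefficients factor as
\[
\coef_j^{(d)}(r^{(k)}) = \bigl(\coef_j^{(d)}(r^{(1)})\bigr)^{k} = \Bigl(\tfrac{d-j}{d}\Bigr)^{k}.
\]

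Setting $t_j := (d-j)/d$ for $j = 0, \dots, d$, the coefficient vector of $r^{(k)}$ is $\bigl(t_0^{k}, t_1^{k}, \dots, t_d^{k}\bigr)$, so the $(d+1)\times(d+1)$ matrix whose $(k,j)$-entry is $\coef_j^{(d)}(r^{(k)}) = t_j^{k}$ is precisely a Vandermonde matrix in the nodes $t_0, \dots, t_d$. Because $j \mapsto (d-j)/d$ is injective, these nodes are pairwise distinct, so the Vandermonde determinant $\prod_{0 \le i < j \le d}(t_j - t_i)$ is nonzero. Consequently the coefficient vectors are linearly independent, and transporting this back through the isomorphism above shows that $\{r^{(k)}\}_{k=0}^{d}$ is a basis of $\C_d[x]$.

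I do not expect a serious obstacle: the only step requiring genuine care is the coefficient computation for $r^{(1)}$ (and making the linear isomorphism $p \mapsto (\coef_j^{(d)}(p))_j$ precise), after which the multiplicativity of $\boxtimes_d$ does all the work and the Vandermonde structure makes linear independence immediate. One minor point to state cleanly is that the $k=0$ row is the all-ones vector (consistent with $r^{(0)} = (x-1)^d$, all of whose normalized coefficients equal $1$), which is exactly the degenerate-power row the Vandermonde determinant already accommodates.
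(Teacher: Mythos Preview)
Your argument is correct. Reading off $\coef_j^{(d)}(r^{(1)}) = (d-j)/d$ and then using the multiplicativity built into $\boxtimes_d$ to get $\coef_j^{(d)}(r^{(k)}) = \bigl((d-j)/d\bigr)^k$ is clean, and the resulting Vandermonde structure (with distinct nodes $1, \tfrac{d-1}{d}, \dots, \tfrac{1}{d}, 0$) settles linear independence immediately. The only place to be careful is exactly where you flagged it: the map $p \mapsto (\coef_j^{(d)}(p))_j$ is defined in the paper for monic $p$, but it extends linearly to all of $\C_d[x]$ since the coefficient of $x^{d-j}$ is $(-1)^j\binom{d}{j}\coef_j^{(d)}(p)$ with nonzero scalar; you handle this correctly.

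The paper takes a different route. It introduces the operator $\mathcal{D}_x = \tfrac{xD}{d}$, observes that $r^{(k)} = (\mathcal{D}_x)^k (x-1)^d$, and uses that each application of $\mathcal{D}_x$ lowers the multiplicity of the root at $x=1$ by exactly one, so $r^{(k)}$ has a zero of order precisely $d-k$ at $1$. Linear independence is then peeled off by evaluating (and differentiating) at $x=1$: in $\sum_k a_k r^{(k)} = 0$, only $r^{(d)}$ survives at $x=1$, forcing $a_d=0$, and so on downward. Your Vandermonde argument is more self-contained and avoids the differential operator machinery; the paper's approach, on the other hand, is chosen because the identity $p \boxtimes_d q = (PQ)(\mathcal{D}_x)(x-1)^d$ is needed immediately afterwards for the proof of Proposition~\ref{prop:boxtimes-atom}, so the operator viewpoint is being set up for later use rather than minimized.
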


\begin{proof}
Consider the operator $\mathcal{D}_x = \frac{x D}{d}$. The action of $\boxtimes_d$ with $r^{(1)}(x)$ on $p \in \calP_d(\C)$ is given by $r^{(1)} \boxtimes_d p (x) = \frac{x Dp(x)}{d} = \mathcal{D}_x p(x)$.
Thus, $r^{(k)}(x) = (\mathcal{D}_x)^k (x-1)^d$ for $k \ge 0$.
The polynomial $r^{(k)}(x)$ has a root at $x=1$ of multiplicity $d-k$ for $0 \le k \le d$.
Consider a linear combination $\sum_{k=0}^d a_k r^{(k)} (x) = 0$.
Evaluating at $x=1$: $a_d r^{(d)}(1) = 0$, we obtain $a_d = 0$.
Now consider $\sum_{k=0}^{d-1} a_k r^{(k)}(x) = 0$.
Similarly, differentiating and evaluating at $x=1$ imply $a_{d-1}=0$.
This inductive step shows all $a_k =0$, and hence $\{r^{(k)}\}_{k=0}^d$ is linearly independent and is a basis of $\C_d [x]$.
\end{proof}

By Lemma \ref{lem:uniqueExpression}, any polynomial $p \in \calP_d(\C)$ can be uniquely expressed as $p(x) = \sum_{k=0}^d a_k r^{(k)}(x)$.
This allows us to associate $p(x)$ with a polynomial $P(X) = \sum_{k=0}^d a_k X^k \in \C_d[X]$ such that $p(x) = P(\mathcal{D}_x)(x-1)^d$.
For $p, q \in \calP_d(\C)$ with associated $P(X)=\sum a_k X^k$ and $Q(X)=\sum b_k X^k$, one has
\begin{align*}
p \boxtimes_d q (x) &= \left(\sum_{i=0}^d a_i r^{(i)}(x)\right) \boxtimes_d \left(\sum_{j=0}^d b_j r^{(j)}(x)\right) = \sum_{i=0}^d \sum_{j=0}^d a_i b_j (r^{(i)} \boxtimes_d r^{(j)})(x) \\
&= \sum_{i=0}^d \sum_{j=0}^d a_i b_j r^{(i+j)}(x) = \sum_{m=0}^{2d} \left(\sum_{i+j = m} a_i b_{j} \right) r^{(m)} (x) \\
&= \sum_{m=0}^{2d} \left(\sum_{i+j=m} a_i b_{j} \right) (\mathcal{D}_x)^m (x-1)^d = [(P Q)(\mathcal{D}_x)] (x-1)^d,
\end{align*}
where $(PQ)(X) = P(X)Q(X)$.

\begin{remark} \label{rem:uniqueExpression}
The product $P(X)Q(X)$ may have degree up to $2d$, so $(PQ)(\mathcal{D}_x)(x-1)^d$ is not necessarily in the span of $\{r^{(k)}\}_{k=0}^d$ without further reduction if $k > d$.
To express $p \boxtimes_d q \in \calP_d(\C)$ in the basis $\{r^{(k)}\}_{k=0}^d$, one must use relations to reduce terms $r^{(k)}$ for $k>d$ such as
\[
r^{(d+1)} = \sum_{k=0}^d c^{(k)} r^{(k)},
\]
where $\{c^{(k)}\}_{k=0}^d$ are the coefficients determined by the expression of $r^{(d+1)}$ by $\{r^{(k)}\}_{k=0}^d$, and also
\[
r^{(d+l)} = \sum_{k=0}^d c^{(k)} r^{(k+l-1)}
= \sum_{k=0}^{l-1} (k+1) c^{(k)} r^{(k)} + l \sum_{k=l}^d c^{(k)} r^{(k)}
\]
for $l = 1, \dots, d$.
\end{remark}

\begin{proof}[Proof of Proposition \ref{prop:boxtimes-atom}]
It is sufficient to prove the case $\alpha = \beta = 1$, as the general case follows by dilation using $p_1(x) := p(x) \boxtimes_d (x-1/\alpha)^d$ and $q_1 (x) := q(x) \boxtimes_d (x-1/\beta)^d$.

Express $p(x) = \sum_{k=0}^d a_k r^{(k)}(x)$ and $q(x) = \sum_{k=0}^d b_k r^{(k)}(x)$ using Lemma \ref{lem:uniqueExpression}.
Since each multiplicity of $p$, $q$, $\{r^{(k)}\}_{k=0}^d$ at $1$ is $m^p_1$, $m^q_1$, and $d-k$, one has $a_d = a_{d-1} = \dots = a_{d-m^p_1+1} = 0 \ne a_{d-m^p_1}$ and $b_d = b_{d-1} = \dots = b_{d-m^q_1+1} = 0 \ne b_{d-m^q_1}$.
Hence,
\[
p = \sum_{k=0}^{d-m^p_1} a_k r^{(k)} \quad \text{and} \quad q(x) = \sum_{k=0}^{d-m^q_1} b_k r^{(k)}
\]
and thus
\[
p \boxtimes_d q = \sum_{k=0}^{2 d-m^p_1 - m^q_1} \left(\sum_{i+j=k} a_i b_j\right) r^{(k)}.
\]
Since $m = m_1^p + m_1^q - d \ge 0$, we have $2d - m_1^p - m_1^q = d-m \le d$.
Hence, the unique expansion of $p \boxtimes_d q$ in the basis $\{r^{(k)}\}_{k=0}^d$ is given by $\sum_{k=0}^{d-m} c_k r^{(k)}$, where $c_k = \sum_{i+j=k} a_i b_j$, cf. Remark \ref{rem:uniqueExpression}.

Finally, the coefficient of $r^{(d-m)}$ is $c_{d-m} = a_{d-m_1^p}b_{d-m_1^q} \ne 0$, which implies the multiplicity of $p \boxtimes_d q$ at $1$ is $m$.

\end{proof}

If $p \in \calP_d (\R)$ and $q \in \calP_d (\R_{\ge 0})$, their empirical root distributions $\meas{p}, \meas{q}$ and CDFs $\cdf_p, \cdf_q$ are well-defined.
For an atom triplet $(\alpha, \beta, \alpha \beta)$ of $(p, q, p \boxtimes_d q)$ with $\alpha\beta \ne 0$, Proposition \ref{prop:boxtimes-atom} implies
\begin{equation} \label{eq:atoms-finite-free-multiplicative}
\meas{p \boxtimes_d q}(\{\alpha \beta\}) = \meas{p}(\{\alpha\}) + \meas{q}(\{\beta\}) -1.
\end{equation}

Given a polynomial $p \in \calP_d (\R)$ and a root $\alpha$ of $p$, let $[\alpha]_{-}^p$ (resp. $[\alpha]_{+}^p$) denote the number of roots of $p$ strictly smaller (resp. strictly larger) than $\alpha$.
If $m^p_\alpha$ is the multiplicity of $p$ at $\alpha$, then $[\alpha]_{-}^p + m^p_\alpha +[\alpha]_{+}^p = d$,
\[  \cdf_p (\alpha) = \frac{[\alpha]_{-}^p + m^p_\alpha}{d}, \quad \text{and} \quad 1- \cdf_p (\alpha) = \frac{[\alpha]_{+}^p}{d}.\]

\begin{proposition}[{\cite[Theorem 1.6 (i)]{KS}}] \label{prop:atoms-finite-free-multiplicative}
For $p, q \in \calP_d (\R_{\ge 0})$, and an atom triplet $(\alpha, \beta, \alpha \beta)$ of $(p, q, p \boxtimes_d q)$ with $\alpha\beta \ne 0$, one has
\begin{equation} \label{eq:smallerRootsMultiplicative}
[\alpha \beta]^{p \boxtimes_d q}_{-} = [\alpha]^p_{-} + [\beta]^q_{-}.
\end{equation}
\end{proposition}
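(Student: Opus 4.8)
The plan is to reduce the root-counting identity to an equivalent statement about cumulative distribution functions, and then prove that statement by adapting the proof of Proposition \ref{prop:cdf_at_atom_multiplicative} to the finite free setting. Concretely, using the two displayed formulas $\cdf_p(\alpha) = ([\alpha]^p_- + m^p_\alpha)/d$ and the analogous ones for $q$ and for $p \boxtimes_d q$ (where the multiplicity of $\alpha\beta$ in $p\boxtimes_d q$ equals $m = m^p_\alpha + m^q_\beta - d$ by Proposition \ref{prop:boxtimes-atom}), a direct substitution shows that \eqref{eq:smallerRootsMultiplicative} is equivalent to the single identity
\[
\cdf_{p \boxtimes_d q}(\alpha\beta) = \cdf_p(\alpha) + \cdf_q(\beta) - 1,
\]
which is the finite free analogue of Proposition \ref{prop:cdf_at_atom_multiplicative}. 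So it suffices to establish this CDF identity, which I would do by proving the two matching inequalities ``$\ge$'' for the number of roots below $\alpha\beta$ and ``$\ge$'' for the number above, and then observing that the totals force both to be equalities.

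For the lower bound I would introduce the cut-down polynomials $p|_\alpha$ and $q|_\beta$. Since $p \le p|_\alpha$ and $q \le q|_\beta$, Proposition \ref{prop:OrderPreserving} gives $p \boxtimes_d q \le p|_\alpha \boxtimes_d q|_\beta$, hence $\cdf_{p\boxtimes_d q}(\alpha\beta) \ge \cdf_{p|_\alpha \boxtimes_d q|_\beta}(\alpha\beta)$. Now $p|_\alpha$ has the root $\alpha$ with multiplicity $d\,\cdf_p(\alpha)$ and all other roots $> \alpha$, and likewise $q|_\beta$ has $\beta$ with multiplicity $d\,\cdf_q(\beta)$; from $m > 0$ one checks $\cdf_p(\alpha) + \cdf_q(\beta) > 1$, so Proposition \ref{prop:boxtimes-atom} applies and makes $\alpha\beta$ a root of $p|_\alpha \boxtimes_d q|_\beta$ of multiplicity $d(\cdf_p(\alpha) + \cdf_q(\beta) - 1) > 0$. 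Because $p|_\alpha, q|_\beta \in \calP_d(\R_{\ge 0})$ have smallest roots $\alpha, \beta$, the minimum-root analogue of Corollary \ref{cor:minRootsFiniteFreeMultiplicative} (obtained exactly as there, comparing $(x-\alpha)^d \le p|_\alpha$ and using $(x-\alpha)^d \boxtimes_d q|_\beta = \Dil_\alpha(q|_\beta)$) shows $\alpha\beta$ is the \emph{smallest} root, whence $\cdf_{p|_\alpha \boxtimes_d q|_\beta}(\alpha\beta) = \cdf_p(\alpha) + \cdf_q(\beta) - 1$ and the lower bound follows.

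The complementary bound I would obtain symmetrically from the cut-up polynomials $p|^\alpha, q|^\beta$ together with reflection: from $p|^\alpha \le p$ and $q|^\beta \le q$ and Proposition \ref{prop:OrderPreserving} one gets $p|^\alpha \boxtimes_d q|^\beta \le p \boxtimes_d q$, and passing to reflected polynomials (via $\widehat{p \boxtimes_d q} = \widehat{p} \boxtimes_d q$ and the relation \eqref{eq:reflected_cdf_relation}) reverses the order; here $\alpha\beta$ becomes the \emph{largest} root of $p|^\alpha \boxtimes_d q|^\beta$ by Corollary \ref{cor:minRootsFiniteFreeMultiplicative}, yielding the estimate for $\meas{p \boxtimes_d q}([\alpha\beta, \infty))$, i.e. for the roots above $\alpha\beta$. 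Adding the two inequalities, the right-hand sides sum to $\meas{p}(\{\alpha\}) + \meas{q}(\{\beta\})$ while the left-hand sides sum to $1 + \meas{p\boxtimes_d q}(\{\alpha\beta\})$, which equals the same quantity by \eqref{eq:atoms-finite-free-multiplicative}; hence both inequalities are equalities, giving the CDF identity and thus \eqref{eq:smallerRootsMultiplicative}. The main obstacle I anticipate is the bookkeeping around the boundary atom $\alpha\beta$: one must carefully verify that the hypotheses of Proposition \ref{prop:boxtimes-atom} hold for the cut polynomials (in particular that the resulting multiplicities are strictly positive) and establish the minimum-root bound for $\boxtimes_d$, since only the maximum-root version is recorded in Corollary \ref{cor:minRootsFiniteFreeMultiplicative}.
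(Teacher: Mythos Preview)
Your proposal is correct and follows essentially the same approach the paper indicates: the paper notes that \eqref{eq:smallerRootsMultiplicative} is equivalent to the CDF identity $\cdf_{p\boxtimes_d q}(\alpha\beta) = \cdf_p(\alpha)+\cdf_q(\beta)-1$ via \eqref{eq:atoms-finite-free-multiplicative}, and then states that this can be proven by the same cut-down/cut-up argument used for Proposition~\ref{prop:cdf_at_atom_multiplicative}. Your expansion of that argument---using $p|_\alpha$, $q|_\beta$ with Proposition~\ref{prop:OrderPreserving} for one inequality, $p|^\alpha$, $q|^\beta$ with reflection for the other, and then matching totals via \eqref{eq:atoms-finite-free-multiplicative}---is exactly what is intended, including your observation that the minimum-root analogue of Corollary~\ref{cor:minRootsFiniteFreeMultiplicative} is obtained by the same order-preserving comparison.
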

Note that Equation \eqref{eq:smallerRootsMultiplicative} is equivalent to
\begin{equation}\label{eq:atomsMultiplicative}
\cdf_{{p \boxtimes_d q}}(\alpha \beta) = \cdf_{{p}}(\alpha) + \cdf_{{q}}(\beta) - 1
\end{equation}
by Equation \eqref{eq:atoms-finite-free-multiplicative}.
This proposition can also be proven by the same approach taken in the proof of Proposition \ref{prop:cdf_at_atom_multiplicative}.

\begin{proposition}[{\cite[Theorem 1.6 (ii)]{KS}}] \label{prop:simple-finite-free-multiplicative}
For $p, q \in \calP_d (\R_{\ge 0})$, every non-trivial root of $p \boxtimes_d q$ is simple.
\end{proposition}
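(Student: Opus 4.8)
The plan is to recast the statement as a genericity assertion about the ``non-trivial part'' of $p\boxtimes_d q$, reduce the multiplicity of a non-trivial root to at most two by an inductive argument on the degree, and then attack the surviving double-root case directly. First I would dispose of the origin: a root $0$ of $p\boxtimes_d q$ arises only from a zero-root of $p$ or of $q$ and is trivial, so after removing the appropriate power of $x$ I may assume $p,q\in\calP_d(\R_{>0})$ and that the candidate non-trivial root $\gamma$ satisfies $\gamma\neq 0$. By Proposition \ref{prop:boxtimes-atom} the multiplicity of every trivial root is pinned down exactly (it equals $m^p_\alpha+m^q_\beta-d$ at an atom triplet $(\alpha,\beta,\alpha\beta)$), so dividing $p\boxtimes_d q$ by the product of the forced factors $(x-\alpha\beta)^{m^p_\alpha+m^q_\beta-d}$ leaves a polynomial whose zeros are exactly the non-trivial ones; the assertion is equivalent to this quotient having only simple roots.

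The workhorse of the reduction is the fact that finite free multiplicative convolution commutes with the finite free derivative, namely $\partial_{d-1|d}(p\boxtimes_d q)=(\partial_{d-1|d}p)\boxtimes_{d-1}(\partial_{d-1|d}q)$, which I would verify from the defining identity $\coef_k^{(d-1)}(\partial_{d-1|d}p)=\coef_k^{(d)}(p)$ for $k\le d-1$. Arguing by strong induction on $d$ (with $d\le 2$ as base cases), suppose $\gamma\neq 0$ is a non-trivial root of $f:=p\boxtimes_d q$ of multiplicity $m$. Then $\gamma$ is a root of $g:=\partial_{d-1|d}f=(\partial_{d-1|d}p)\boxtimes_{d-1}(\partial_{d-1|d}q)$ of multiplicity $m-1$, where $\partial_{d-1|d}p,\partial_{d-1|d}q\in\calP_{d-1}(\R_{>0})$ by Gauss--Lucas. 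A short case analysis on the roots of $\partial_{d-1|d}p$ and $\partial_{d-1|d}q$ (each is either a root of $p$, resp.\ $q$, of multiplicity one higher, or a simple new critical point in a gap) shows that $\gamma$ remains \emph{non-trivial} for $g$: the only way it could become trivial forces $m^p_{\alpha}+m^q_{\beta}\ge d+2$ for suitable roots $\alpha,\beta$ with $\alpha\beta=\gamma$, making $\gamma$ already trivial for $f$, or forces $p$ or $q$ to be a pure power $(x-c)^d$, in which case $f$ is a dilation and $\gamma$ is not a root at all. By the inductive hypothesis $\gamma$ is then simple for $g$, so $m-1\le 1$; that is, every non-trivial root of $f$ has multiplicity at most two.

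The crux is to rule out the remaining double root, i.e.\ to show that $m=2$ is impossible for a non-trivial $\gamma$. Here differentiation is of no further help, since a double root of $f$ descends only to a (permitted) simple root of $\partial_{d-1|d}f$. The case $d=2$ already exhibits the mechanism: $f(x)=x^2-2\coef_1(p)\coef_1(q)\,x+\coef_2(p)\coef_2(q)$ has a double root exactly when $\coef_1(p)^2\coef_1(q)^2=\coef_2(p)\coef_2(q)$, and since $\coef_1^2\ge \coef_2$ for real roots (AM--GM) together with $\coef_2(p),\coef_2(q)>0$ (positivity), equality forces $\coef_1(p)^2=\coef_2(p)$ and $\coef_1(q)^2=\coef_2(q)$, hence $\lambda_1=\lambda_2$ and $\beta_1=\beta_2$; thus $p$ and $q$ are pure squares and $\gamma$ is trivial after all. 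In general I would try to globalize this by upgrading the interlacing preservation of Proposition \ref{prop:InterlacingPreserving} to a \emph{strict} statement: deforming one factor along an interlacing family $p_t$, the roots of $p_t\boxtimes_d q$ move monotonically, and one wants them to move strictly, so that two non-trivial roots can never collide. Equivalently, one wants the discriminant of the non-trivial factor to be nonvanishing away from the combinatorially forced coincidences.

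The main obstacle is precisely this last strictness/nonvanishing statement: the derivative induction and the order/interlacing monotonicity available in the paper readily bound the multiplicity by two, but they do not by themselves detect or exclude a genuine collision of two non-trivial roots, because both real-rootedness and weak interlacing are perfectly compatible with a double root. This is exactly the delicate point that Kostov and Shapiro \cite{KS} settle through their dedicated deformation analysis of the Schur--Szeg\H{o} composition, and it is where I would fall back on their argument to complete the proof.
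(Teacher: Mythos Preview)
Your reduction to multiplicity at most two via the commutation $\partial_{d-1|d}(p\boxtimes_d q)=(\partial_{d-1|d}p)\boxtimes_{d-1}(\partial_{d-1|d}q)$ and strong induction is sound, and it is essentially the same opening move the paper makes (the appendix proof for $\boxplus_d$ writes simply ``After derivation, we may also assume the multiplicity is $2$''); your $d=2$ computation via AM--GM is also correct. One small slip in your case analysis: when $p=(x-c)^d$ the conclusion is not that $\gamma$ fails to be a root of $f$, but that every root of $f=\Dil_c(q)$ is trivial (since $m^p_c=d$), so a non-trivial $\gamma$ cannot occur.

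The genuine gap is exactly where you locate it: excluding a non-trivial double root for $d\ge 3$. You correctly diagnose that the derivative induction and the interlacing monotonicity of Proposition~\ref{prop:InterlacingPreserving} are weak-inequality statements that cannot by themselves separate two colliding non-trivial roots, and your proposed ``strict interlacing'' or discriminant-nonvanishing upgrade is precisely the missing ingredient you do not supply. The paper's route (spelled out for $\boxplus_d$ in Appendix~\ref{app:proof_prop_boxplus_simple}, which is the template for the multiplicative case from \cite{KS}) is not a global strictness argument but a local perturbation/contradiction scheme: after normalizing the putative non-trivial double root, one perturbs $p$ along explicit one- and two-parameter families---adding a constant when $p$ has only simple roots, or moving a copy of a repeated root $\alpha$ via $p^\delta=p-\delta\,p/(x-\alpha)$---so that $p^\delta\boxtimes_d q$ differs from $p\boxtimes_d q$ by a controlled lower-degree term, and then invokes real-rootedness (Proposition~\ref{prop:preserve}) to force a contradiction. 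The endgame requires the basis Lemma~\ref{lem:appendix_basis} to express $(x-\alpha)^d$ and a suitable auxiliary polynomial as combinations of $p$ and the $p_{\alpha_i}$, pinning down the last ``worst case''. Without carrying out an argument of this shape, the double-root step remains open in your proposal.
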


We omit the proof of this proposition as it was already proved in \cite{KS}. However, we will provide a precise proof for the analogous statement for $\boxplus_d$ in Appendix \ref{app:proof_prop_boxplus_simple} for self-completeness. The idea used there originates from Kostov and Shapiro \cite{KS}, so an interested reader can reconstruct the argument for $\boxtimes_d$ by imitating that proof.
As a quick remark, it suffices to prove this for $p, q \in \calP_d (\R_{> 0})$.
If, for instance, $q(x) = x^{d-k}q_{k}(x)$ where $q_{k}(0) \ne 0$, then $p \boxtimes_d q (x) = x^{d-k} \diff_{k|d}p \boxtimes_k q_k (x)$, and hence the (non)-trivial roots of $p \boxtimes_d q$ are exactly those of $\diff_{k|d}p \boxtimes_k q_k$, and vice versa.

\subsection{Finite free additive convolution}
\label{sec:atoms-additive}

First, we verify that an analogue of Proposition \ref{prop:boxtimes-atom} holds for $\boxplus_d$.
\begin{proposition} \label{prop:boxplus_atom}
Given two polynomials $p, q \in \calP_d (\C)$ such that $\alpha$ is a root of $p$ with multiplicity $m^p_\alpha$ and $\beta$ is a root of $q$ with multiplicity $m^q_\beta$. If $m := m^p_\alpha + m^q_\beta - d \ge 0$, then $\alpha+\beta$ is a root of $p \boxplus_d q$ of multiplicity $m$.
(If $m = 0$, $\alpha+\beta$ is not a root of $p\boxplus_d q$.)
\end{proposition}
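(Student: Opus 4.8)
The plan is to mirror the coefficient-level argument behind Proposition~\ref{prop:boxtimes-atom}, but to exploit the fact that for $\boxplus_d$ the shift is itself a finite free convolution, which makes the preliminary reduction especially clean. First I would reduce to the case $\alpha=\beta=0$. Recall that $\Shi_c(p)=p\boxplus_d(x-c)^d$ and that $(x-a)^d\boxplus_d(x-b)^d=(x-a-b)^d$; combining these with the commutativity and associativity of $\boxplus_d$ gives $(\Shi_{-\alpha}p)\boxplus_d(\Shi_{-\beta}q)=\Shi_{-\alpha-\beta}(p\boxplus_d q)$. Since $\Shi_{-\alpha}p$ has a root at $0$ of multiplicity $m^p_\alpha$ (and likewise $\Shi_{-\beta}q$ at $0$ of multiplicity $m^q_\beta$), and since a root of $p\boxplus_d q$ at $\alpha+\beta$ of multiplicity $m$ corresponds exactly to a root of $\Shi_{-\alpha-\beta}(p\boxplus_d q)$ at $0$ of the same multiplicity, it suffices to treat $\alpha=\beta=0$.

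Second, I would translate the multiplicity hypotheses into vanishing statements for the coefficients $\coef_k$. Writing $s:=m^p_0$ and $t:=m^q_0$, the polynomial $p$ has a root at $0$ of multiplicity $s$ precisely when $\coef_d(p)=\coef_{d-1}(p)=\dots=\coef_{d-s+1}(p)=0$ and $\coef_{d-s}(p)\neq0$; the analogous statement holds for $q$ with $t$. The claim $m=s+t-d$ then amounts to showing $\coef_k(p\boxplus_d q)=0$ for $k\in\{d-m+1,\dots,d\}$ together with $\coef_{d-m}(p\boxplus_d q)\neq0$.

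Third, I would substitute into the defining formula $\coef_k(p\boxplus_d q)=\sum_{i=0}^k\binom{k}{i}\coef_i(p)\coef_{k-i}(q)$ and track which terms can survive. A term is nonzero only if $i\le d-s$ and $k-i\le d-t$, that is $k-d+t\le i\le d-s$. This index window is empty exactly when $k>2d-s-t=d-m$, which immediately gives $\coef_k(p\boxplus_d q)=0$ for all $k>d-m$, so $0$ is a root of multiplicity at least $m$. For $k=d-m$ the bounds collapse: the lower bound equals $t-m=d-s$, and, using $m\le s$ (equivalently $t\le d$), the upper bound $\min(d-m,d-s)=d-s$ as well, so the sum reduces to the single index $i=d-s$. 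This yields $\coef_{d-m}(p\boxplus_d q)=\binom{d-m}{d-s}\coef_{d-s}(p)\coef_{d-t}(q)$, a product of a positive binomial coefficient with two nonzero factors, hence nonzero. Therefore the multiplicity at $0$ is exactly $m$; when $m=0$ this reads $\coef_d(p\boxplus_d q)\neq0$, i.e.\ $\alpha+\beta$ is not a root, matching the parenthetical claim.

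The computation is elementary, so the only real care needed is bookkeeping: correctly converting ``root of multiplicity $s$ at $0$'' into the top block of vanishing $\coef$'s, and verifying that at the critical index $k=d-m$ the outer $\max$ and $\min$ in the summation range genuinely collapse to a single term. The inequality $m\le s$, which holds because $t=m^q_0\le d$, is precisely what forces this collapse and is the one place where the degree constraint enters; I expect this to be the main (if modest) obstacle, since getting the endpoints of the index window right is where an off-by-one error would most easily creep in.
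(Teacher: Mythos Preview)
Your proposal is correct and follows essentially the same approach as the paper's proof: reduce to $\alpha=\beta=0$ via shifting, translate the multiplicity hypotheses into vanishing of the top block of coefficients $\coef_k$, and then read off from the defining sum that $\coef_k(p\boxplus_d q)=0$ for $k>d-m$ while $\coef_{d-m}(p\boxplus_d q)=\binom{d-m}{d-s}\coef_{d-s}(p)\coef_{d-t}(q)\neq 0$. Your write-up is in fact more explicit about the index-window bookkeeping than the paper's, but the argument is the same.
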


\begin{proof}
By shifting $p_1(x) := p(x-\alpha)$ and $q_1(x) := q(x-\beta)$, we can assume $\alpha = \beta =0$ without loss of generality.
Since $(-1)^k \tilde{e}_k (p) = \diff_{k|d}(p)(0)$ for $k=0, \dots, d$,
we have $\tilde{e}_d (p) = \dots = \tilde{e}_{d- m^p_0 +1} (p) = 0 \ne \tilde{e}_{d- m^p_0} (p) $ and also $\tilde{e}_d (q) = \dots = \tilde{e}_{d- m^q_0 +1} (q) = 0 \ne \tilde{e}_{d- m^q_0} (q)$.
By definition, we have
\[
(-1)^k \diff_{k|d}(p \boxplus_d q)(0) = \tilde{e}_k (p \boxplus_d q) = \sum_{i+j = k}\binom{k}{i} \tilde{e}_i (p) \tilde{e}_j (q),
\]
and hence $\tilde{e}_{d-m} (p \boxplus_d q) = \binom{d-m}{d-m^p_0} \tilde{e}_{d-m^p_0} (p) \tilde{e}_{d-m^q_0} (q) \ne 0$ and $\tilde{e}_{k} (p \boxplus_d q) = 0$ for $d-m < k \le d$.
This implies that $p \boxplus_d q$ has a root at $0$ with multiplicity $m$.
\end{proof}

Analogous to the multiplicative case, if $m > 0$ in Proposition \ref{prop:boxplus_atom}, we call $\alpha + \beta$ a \textit{trivial root} of $p \boxplus_d q$, and $(\alpha, \beta, \alpha + \beta)$ an \textit{atom triplet} of $(p, q, p \boxplus_d q)$. Other roots are \textit{non-trivial}.

The proof of the following proposition is omitted as it closely mirrors that of Proposition \ref{prop:cdf_at_atom_additive}.
\begin{proposition} \label{prop:atoms-finite-free}
Let $p, q \in \calP_d (\R)$. For an atom triplet $(\alpha, \beta, \alpha + \beta)$ of $(p, q, p \boxplus_d q)$, we have
\[
\cdf_{{p \boxplus_d q}}(\alpha + \beta) = \cdf_{{p}}(\alpha) + \cdf_{{q}}(\beta) - 1.
\]
\end{proposition}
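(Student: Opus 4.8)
The plan is to mirror the proof of Proposition \ref{prop:cdf_at_atom_additive}, replacing the analytic tools of free probability by their finite free counterparts: the partial order on $\calP_d(\R)$, the order-preservation of $\boxplus_d$ (Proposition \ref{prop:OrderPreserving}), the root bound of Corollary \ref{cor:minRootsFiniteFreeMultiplicative}, and the atom structure of Proposition \ref{prop:boxplus_atom}.

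First I would pass to the cut-down polynomials $p|_\alpha$ and $q|_\beta$. Since $p \le p|_\alpha$ and $q \le q|_\beta$, commutativity of $\boxplus_d$ together with Proposition \ref{prop:OrderPreserving} gives $p \boxplus_d q \le p|_\alpha \boxplus_d q|_\beta$, hence $\cdf_{p \boxplus_d q}(\alpha+\beta) \ge \cdf_{p|_\alpha \boxplus_d q|_\beta}(\alpha+\beta)$. The key observations about the right-hand polynomial are: (i) every root of $p|_\alpha$ is $\ge \alpha$ and every root of $q|_\beta$ is $\ge \beta$, so the smallest root of $p|_\alpha \boxplus_d q|_\beta$ is $\ge \alpha+\beta$; and (ii) the multiplicity of $\alpha$ in $p|_\alpha$ equals the number of roots of $p$ that are $\le \alpha$, i.e. $d\,\cdf_p(\alpha)$, and likewise for $q|_\beta$. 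Since these multiplicities dominate $m^p_\alpha$ and $m^q_\beta$, the triplet $(\alpha,\beta,\alpha+\beta)$ is again an atom triplet of $(p|_\alpha, q|_\beta, p|_\alpha \boxplus_d q|_\beta)$, and Proposition \ref{prop:boxplus_atom} yields $\meas{p|_\alpha \boxplus_d q|_\beta}(\{\alpha+\beta\}) = \cdf_p(\alpha) + \cdf_q(\beta) - 1$. Because $\alpha+\beta$ is the leftmost root by (i), this mass is exactly $\cdf_{p|_\alpha \boxplus_d q|_\beta}(\alpha+\beta)$, and therefore
\[
\cdf_{p \boxplus_d q}(\alpha+\beta) \ge \cdf_p(\alpha) + \cdf_q(\beta) - 1.
\]

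Next I would run the identical argument on the reflected polynomials $\widehat p, \widehat q$, whose relevant atom triplet is $(-\alpha,-\beta,-\alpha-\beta)$ and which satisfy $\widehat{p \boxplus_d q} = \widehat p \boxplus_d \widehat q$, to obtain
\[
\cdf_{\widehat{p \boxplus_d q}}(-\alpha-\beta) \ge \cdf_{\widehat p}(-\alpha) + \cdf_{\widehat q}(-\beta) - 1.
\]
Adding the two displayed inequalities and simplifying each of the sums $\cdf_p(\alpha) + \cdf_{\widehat p}(-\alpha)$, $\cdf_q(\beta) + \cdf_{\widehat q}(-\beta)$, and $\cdf_{p\boxplus_d q}(\alpha+\beta) + \cdf_{\widehat{p\boxplus_d q}}(-\alpha-\beta)$ via the reflection identity \eqref{eq:reflected_cdf_relation} collapses everything to $1 + \meas{p\boxplus_d q}(\{\alpha+\beta\}) \ge \meas{p}(\{\alpha\}) + \meas{q}(\{\beta\})$. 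But Proposition \ref{prop:boxplus_atom} forces $\meas{p\boxplus_d q}(\{\alpha+\beta\}) = \meas{p}(\{\alpha\}) + \meas{q}(\{\beta\}) - 1$, so this relation is an equality; hence both inequalities must be equalities, and the first one is precisely the claim.

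I expect the only delicate point to be step (i): justifying that $\alpha+\beta$ is genuinely the minimal root of $p|_\alpha \boxplus_d q|_\beta$. Corollary \ref{cor:minRootsFiniteFreeMultiplicative} is stated as an upper bound on maximal roots, so I would carefully transport it to a lower bound on minimal roots through reflection, using $\lambda_{\mathrm{max}}(\widehat r) = -\lambda_{\mathrm{min}}(r)$ and $\widehat{p|_\alpha \boxplus_d q|_\beta} = \widehat{p|_\alpha} \boxplus_d \widehat{q|_\beta}$. Once this lower bound is in place, identifying $\cdf_{p|_\alpha \boxplus_d q|_\beta}(\alpha+\beta)$ with the atomic mass at the leftmost root is immediate, and the remainder is the bookkeeping already carried out in the proof of Proposition \ref{prop:cdf_at_atom_additive}.
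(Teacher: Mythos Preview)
Your proposal is correct and is exactly the argument the paper has in mind: the paper omits the proof of Proposition \ref{prop:atoms-finite-free} precisely because it ``closely mirrors that of Proposition \ref{prop:cdf_at_atom_additive},'' and you have carried out that mirroring with the appropriate finite free substitutes (Proposition \ref{prop:OrderPreserving}, Corollary \ref{cor:minRootsFiniteFreeMultiplicative} via reflection, and Proposition \ref{prop:boxplus_atom}). The only point you flag as delicate---transporting the max-root bound to a min-root bound by reflection---is indeed routine, and your treatment of it is fine.
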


Finally, we conclude this section by stating that non-trivial roots of $p \boxplus_d q$ are simple, analogous to Proposition \ref{prop:simple-finite-free-multiplicative}.
\begin{proposition}\label{prop:boxplus_simple}
For $p, q \in \calP_d (\R)$, every non-trivial root of $p \boxplus_d q$ is simple.
\end{proposition}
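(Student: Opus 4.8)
The plan is to prove the contrapositive: a root $\gamma$ of $p\boxplus_d q$ of multiplicity at least $2$ must be trivial. The main engine is a monotone interlacing perturbation of $q$ combined with the interlacing-preservation property of $\boxplus_d$ (Proposition \ref{prop:InterlacingPreserving}), which will \emph{pin} such a multiple root in place and thereby produce a family of linear conditions satisfied at $\gamma$.

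First I would set up the perturbation. Order the roots of $q$ as $\beta_1\le\cdots\le\beta_d$, and for a distinct root $\beta_j$ let $q_t$ be the polynomial obtained by raising one copy of $\beta_j$ to $\beta_j+t$. Writing $\tilde q_j:=q/(x-\beta_j)\in\C_{d-1}[x]\subset\C_d[x]$ one has the exact identity $q_t=q-t\,\tilde q_j$, and $q\lessdot q_t$ holds for all small $t>0$. Applying Proposition \ref{prop:InterlacingPreserving} with $r=p$ gives $f\lessdot f_t$, where $f:=p\boxplus_d q$ and $f_t:=p\boxplus_d q_t$. Now suppose $\gamma$ is a root of $f$ of multiplicity at least $2$, say $\lambda_i(f)=\lambda_{i+1}(f)=\gamma$ with $i\le d-1$. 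The interlacing $f\lessdot f_t$ forces $\lambda_i(f)\le\lambda_i(f_t)\le\lambda_{i+1}(f)$, i.e.\ $\lambda_i(f_t)=\gamma$, so $\gamma$ remains a root of $f_t$. By bilinearity, $f_t=f-t\,h_j$ with $h_j:=p\boxplus_d\tilde q_j$, whence $0=f_t(\gamma)=-t\,h_j(\gamma)$ for small $t\ne0$, giving $h_j(\gamma)=0$.

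Running this over every distinct root $\beta_j$ of $q$, and symmetrically (by commutativity) over every distinct root $\alpha_i$ of $p$ with $r=q$, yields the over-determined system
\[
\bigl(p\boxplus_d (q/(x-\beta_j))\bigr)(\gamma)=0\ \ \text{for all }j,\qquad \bigl((p/(x-\alpha_i))\boxplus_d q\bigr)(\gamma)=0\ \ \text{for all }i,
\]
together with $f(\gamma)=0$. The crux — and the step I expect to be the main obstacle — is to show this system is incompatible with $\gamma$ being \emph{non-trivial}: concretely, that the simultaneous vanishing at $\gamma$ of $p\boxplus_d q$ and of all its root-deleted companions forces a coincidence $\gamma=\alpha+\beta$ with $m^p_\alpha+m^q_\beta>d$. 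This is exactly the rigidity established by Kostov and Shapiro in the multiplicative setting \cite{KS}, and I would carry it out by their rank/linear-algebra computation, transported to $\boxplus_d$ through the additive operator representation $p\boxplus_d q=\sum_{k}\tfrac{(-1)^k\coef_k^{(d)}(q)}{k!}D^{k}p$, the analogue of the $\{r^{(k)}\}$-dictionary of Lemma \ref{lem:uniqueExpression}.

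As an optional shortening, one can first cut the problem down using the derivative-compatibility $\partial_{d-1|d}(p\boxplus_d q)=\partial_{d-1|d}p\boxplus_{d-1}\partial_{d-1|d}q$, which follows from the identity $\coef_k^{(d-1)}(\partial_{d-1|d}p)=\coef_k^{(d)}(p)$ and the convolution formula. Inducting on $d$ and using that the multiple roots of a real-rooted polynomial's derivative are exactly its own multiple roots (with multiplicity lowered by one), a non-trivial root of $f$ of multiplicity $\mu$ becomes a root of multiplicity $\mu-1$ of $\partial_{d-1|d}f=\partial_{d-1|d}p\boxplus_{d-1}\partial_{d-1|d}q$; the induction hypothesis then forces $\mu\le2$ and reduces everything to excluding a single non-trivial \emph{double} root, which is precisely the case the pinning argument above is designed to kill.
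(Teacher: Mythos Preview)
Your setup via interlacing is a legitimate and rather clean variant of the paper's own perturbation mechanism. Where the paper (Appendix~\ref{app:proof_prop_boxplus_simple}) uses the fact that a small additive perturbation near an even-order real root can push the polynomial out of $\calP_d(\R)$, you instead use Proposition~\ref{prop:InterlacingPreserving} to \emph{pin} the double root, and both routes yield the same first-order conditions $(p\boxplus_d \tilde q_j)(\gamma)=0$. The derivative-compatibility $\partial_{d-1|d}(p\boxplus_d q)=\partial_{d-1|d}p\boxplus_{d-1}\partial_{d-1|d}q$ and the reduction to multiplicity two are also used implicitly by the paper (``After derivation, we may also assume the multiplicity is $2$''); note, however, that passing non-triviality through the derivative needs a short argument (if $\gamma$ is trivial for $(\partial p,\partial q)$ with witnesses $\alpha',\beta'$, then either both are roots of $p,q$ with one higher multiplicity, giving triviality for $(p,q)$, or one of $\partial p,\partial q$ is a pure power, forcing $q=(x-\beta')^d$ and again triviality).

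The genuine gap is in the step you label ``the crux.'' The first-order system $\{\,(p\boxplus_d(q/(x-\beta_j)))(\gamma)=0,\ ((p/(x-\alpha_i))\boxplus_d q)(\gamma)=0,\ f(\gamma)=0\,\}$ alone does \emph{not} force triviality when both $p$ and $q$ have few distinct roots: the span of $\{q, q/(x-\beta_1),\dots,q/(x-\beta_r)\}$ has dimension only $r+1$, so the linear functional $g\mapsto (p\boxplus_d g)(\gamma)$ vanishing on it is far from contradictory when $r\ll d$. The paper's proof spends almost all of its effort precisely here, bringing in \emph{second-order} information (the values $U'_\alpha(0)$ with $U_\alpha=(p/(x-\alpha))\boxplus_d q$), two-parameter perturbations $p^{(\delta,\epsilon)}$ to reduce multiplicities, and finally Lemma~\ref{lem:appendix_basis} to express $(x-\alpha)^d$ and a suitable test polynomial $r(x)$ in the span of $p$ and its root-deletions, contradicting Proposition~\ref{prop:boxplus_atom}. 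Your pinning argument does not deliver these derivative conditions (interlacing only guarantees $\gamma$ is a root of $f_t$, not a double root), so ``transporting KS's rank computation'' is not a one-line matter---you will have to recreate the full case analysis of Appendix~\ref{app:proof_prop_boxplus_simple}.
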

The proof, nearly identical to that of \cite[Theorem 1.6 (ii)]{KS} (which is for the Schur-Szeg\H{o} composition), is provided in Appendix \ref{app:proof_prop_boxplus_simple} for completeness.

\section{Convergent sequence of polynomials} \label{sec:convergence}

This section investigates the convergence of sequences of finite free convolutions to their free counterparts, representing a core contribution of this paper.
We begin in Section \ref{sec:monotone} by establishing that the monotonicity of distances, known for free convolutions (cf. Proposition \ref{prop:monotonicity_distance_additive}), also holds for the finite versions $\boxplus_d$ and $\boxtimes_d$.
Building upon this crucial property, Section \ref{sec:approx_convolutions} extends prior convergence results (like Proposition \ref{prop:convergence-finite-free-additive}) by removing compactness assumptions on the underlying measures. This subsection further provides alternative proofs for existing theorems (e.g., Proposition \ref{prop:convergence-finite-free-multiplicative}), strengthens weak convergence to convergence in Kolmogorov distance, and demonstrates a slight generalization of Proposition \ref{prop:convergenceFiniteFreeMultiplicative}.

\subsection{Monotone property for distances}\label{sec:monotone}

We examine the relationship between Kolmogorov and Lévy distances for empirical measures of polynomials in $\calP_d (\R)$.
First, recall that for $p \in \calP_d(\R)$, its CDF, $\cdf_p = \cdf_{\meas{p}}$, is a step function with jumps of size $k/d$ for integers $k$. Specifically,
\[
\cdf_p: \R \to \left\{0, \frac{1}{d}, \frac{2}{d}, \dots, \frac{d}{d}\right\}.
\]
Consequently, for any $p, q \in \calP_d (\R)$, the Kolmogorov distance $d_K (p, q) = d_K(\meas{p}, \meas{q})$ must be a multiple of $1/d$:
\[
d_K (p, q) \in \left\{0, \frac{1}{d}, \frac{2}{d}, \dots, \frac{d}{d}\right\}.
\]
Furthermore, if $p \lessdot q$ (i.e., $p$ interlaces $q$), then their CDFs are ``close,'' leading to $d_K (p, q) \le 1/d$.

\begin{lemma} \label{lem:pan}
Let $p, q \in \calP_d (\R)$. If for some integer $0 \le l \le d$,
\begin{equation} \label{eq:pan_cond}
\cdf_q (x) \le \cdf_p (x) + \frac{l}{d} \qquad \text{for all } x \in \R,
\end{equation}
then there exists a sequence of polynomials $q = q^{(0)}, q^{(1)}, \dots, q^{(l)}$ such that $q^{(k-1)} \lessdot q^{(k)}$ for $k=1, \dots, l$, and $p \le q^{(l)}$.
As a consequence, for any $r \in \calP_d (\R)$,
\[
\cdf_{q \boxplus_d r} (x) \le \cdf_{p \boxplus_d r} (x) + \frac{l}{d} \qquad \text{for all } x \in \R,
\]
and for any $r \in \calP_d (\R_{\ge 0})$,
\[
\cdf_{q \boxtimes_d r} (x) \le \cdf_{p \boxtimes_d r} (x) + \frac{l}{d} \qquad \text{for all } x \in \R.
\]
\end{lemma}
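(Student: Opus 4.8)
The plan is to reduce the hypothesis \eqref{eq:pan_cond} to a comparison of ordered roots, to build the chain $q^{(0)}, \dots, q^{(l)}$ from an explicit formula for its roots, and finally to transport the chain through $\boxplus_d$ (resp.\ $\boxtimes_d$) using Propositions \ref{prop:InterlacingPreserving} and \ref{prop:OrderPreserving}. Write the roots of $p$ and $q$ in non-decreasing order as $\lambda_1(p) \le \dots \le \lambda_d(p)$ and $\lambda_1(q) \le \dots \le \lambda_d(q)$, and adopt the convention $\lambda_j(p) := -\infty$ for $j \le 0$. The first step is the elementary observation that \eqref{eq:pan_cond} forces $\lambda_{i-l}(p) \le \lambda_i(q)$ for every $i$: evaluating \eqref{eq:pan_cond} at $x = \lambda_i(q)$ gives $\cdf_p(\lambda_i(q)) \ge \cdf_q(\lambda_i(q)) - l/d \ge (i-l)/d$, so at least $i-l$ roots of $p$ lie at or below $\lambda_i(q)$, which is exactly $\lambda_{i-l}(p) \le \lambda_i(q)$ (trivially true when $i \le l$).

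With this in hand, I would define, for $0 \le k \le l$, the polynomial $q^{(k)} \in \calP_d(\R)$ by prescribing its ordered roots
\[
\lambda_i(q^{(k)}) := \max\bigl(\lambda_i(q),\, \lambda_{i-(l-k)}(p)\bigr), \qquad i = 1, \dots, d.
\]
Since both arguments of the maximum are non-decreasing in $i$, these are valid (non-decreasing, real) roots, so $q^{(k)}$ is well defined. At the two ends: for $k = 0$ the observation above gives $\lambda_{i-l}(p) \le \lambda_i(q)$, hence $\lambda_i(q^{(0)}) = \lambda_i(q)$ and $q^{(0)} = q$; for $k = l$ one gets $\lambda_i(q^{(l)}) = \max(\lambda_i(q), \lambda_i(p)) \ge \lambda_i(p)$, so $\cdf_{q^{(l)}} \le \cdf_p$, i.e.\ $p \le q^{(l)}$. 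It then remains to verify $q^{(k-1)} \lessdot q^{(k)}$, i.e.\ $\lambda_i(q^{(k-1)}) \le \lambda_i(q^{(k)}) \le \lambda_{i+1}(q^{(k-1)})$ for all $i$. The left inequality holds because $\lambda_{i-(l-k)-1}(p) \le \lambda_{i-(l-k)}(p)$; the right inequality reads $\max(\lambda_i(q), \lambda_{i-(l-k)}(p)) \le \max(\lambda_{i+1}(q), \lambda_{i-(l-k)}(p))$, which is immediate from $\lambda_i(q) \le \lambda_{i+1}(q)$. This establishes the interlacing chain.

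For the consequences, fix $r \in \calP_d(\R)$. Applying Proposition \ref{prop:InterlacingPreserving} to each link of the chain yields $q^{(k-1)} \boxplus_d r \lessdot q^{(k)} \boxplus_d r$ for $k = 1, \dots, l$. As recalled just before the lemma, interlacing polynomials of degree $d$ satisfy $\cdf_{q^{(k-1)} \boxplus_d r}(x) \le \cdf_{q^{(k)} \boxplus_d r}(x) + 1/d$ (from $p \lessdot q \Rightarrow p \le q$ together with $d_K \le 1/d$). Telescoping over $k$ then gives $\cdf_{q \boxplus_d r}(x) \le \cdf_{q^{(l)} \boxplus_d r}(x) + l/d$. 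Finally, $p \le q^{(l)}$ and Proposition \ref{prop:OrderPreserving} give $p \boxplus_d r \le q^{(l)} \boxplus_d r$, i.e.\ $\cdf_{q^{(l)} \boxplus_d r}(x) \le \cdf_{p \boxplus_d r}(x)$; combining the two yields the additive claim. The multiplicative claim is identical, using $r \in \calP_d(\R_{\ge 0})$ and the $\boxtimes_d$ parts of Propositions \ref{prop:InterlacingPreserving} and \ref{prop:OrderPreserving}.

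The only genuinely delicate point is the construction of the chain: choosing the root formula $\max(\lambda_i(q), \lambda_{i-(l-k)}(p))$ so that consecutive members interlace while the endpoints land exactly on $q$ and on a polynomial dominating $p$. Once the interlacing is verified, both displayed consequences follow routinely from the single-step bound and the monotonicity results already available, so I do not anticipate further obstacles.
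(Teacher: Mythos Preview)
Your proof is correct and follows the same overall architecture as the paper: translate \eqref{eq:pan_cond} into the root comparison $\lambda_{i-l}(p) \le \lambda_i(q)$, build an interlacing chain from $q$ up to something dominating $p$, then push the chain through $\boxplus_d$ (resp.\ $\boxtimes_d$) via Propositions \ref{prop:InterlacingPreserving} and \ref{prop:OrderPreserving} and telescope the resulting $1/d$ bounds.

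The one genuine difference is in the construction of the chain. The paper does something cruder: it picks any $a > \max(\lambda_d(p), \lambda_d(q))$ and defines $q^{(k)}$ by replacing the $k$ smallest roots of $q$ with $a$, so that $q^{(l)}$ has roots $\lambda_{l+1}(q), \dots, \lambda_d(q), a, \dots, a$ and the relation $p \le q^{(l)}$ follows directly from $\lambda_i(p) \le \lambda_{l+i}(q)$. Your construction $\lambda_i(q^{(k)}) = \max(\lambda_i(q), \lambda_{i-(l-k)}(p))$ is more intrinsic---no auxiliary constant, and the endpoint $q^{(l)}$ is exactly the root-wise maximum of $p$ and $q$---at the cost of a slightly longer interlacing verification. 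Both serve the same purpose and the downstream argument is identical.
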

\begin{proof}
Let $\lambda_1 (p) \le \lambda_2 (p) \le \dots \le \lambda_d (p)$ be the roots of $p$ and $(\lambda_i (q))_{i=1}^d$ be those of $q$.
By the assumption \eqref{eq:pan_cond}, we know that
\begin{equation} \label{eq:condition}
\lambda_{i} (p) \le \lambda_{l+i} (q)
\end{equation}
for $i = 1, \dots, d-l$.
Choose some $a > \max(\lambda_d (p), \lambda_d (q))$.
For $k = 1, \dots, l$, we inductively define
\[
q^{(k)} (x) := q^{(k-1)}(x) \cdot \frac{x-a}{x-\lambda_k (q)},
\]
that is, $q^{(k+1)}$ obtained by replacing the minimum root $\lambda_k(q)$ of $q^{(k)}$ with $a$.
This construction ensures $q^{(k-1)} \lessdot q^{(k)}$.
Besides,
\[
p(x) = \prod_{i=1}^d (x-\lambda_i(p)) \le (x-a)^l \prod_{i=1}^{d-l}(x-\lambda_{l+i}(q)) = q^{(l)}(x)
\]
by Equation \eqref{eq:condition}.

The rest follows from Propositions \ref{prop:InterlacingPreserving} and \ref{prop:OrderPreserving}.
\end{proof}

\begin{proof}[Proof of Theorem \ref{thm:main2}]
Let $p, q, r \in \calP_d (\R)$.
If $d_K (p, q) = l/d$ then
\[
\cdf_p (x) - \frac{l}{d} \le \cdf_q (x) \le \cdf_p (x)+ \frac{l}{d}
\]
for all $x \in \R$.
Hence, by Lemma \ref{lem:pan}
\[
\cdf_{p \boxplus_d r} (x) - \frac{l}{d} \le \cdf_{q \boxplus_d r} (x) \le \cdf_{p \boxplus_d r} (x)+ \frac{l}{d}
\]
for all $x \in \R$, which implies $d_K (p \boxplus_d r, q \boxplus_d r) \le l/d = d_K (p, q)$.

When $r \in \calP_d(\R_{\ge 0})$ the inequality $d_K (p \boxtimes_d r, q \boxtimes_d r) \le l/d = d_K (p, q)$ is proven by the same way.

For Lévy distance, it is enough to note that if
\[
\cdf_p(x-\epsilon)-\epsilon \le \cdf_{q} (x) \le \cdf_p(x+\epsilon)+\epsilon \quad \text{for all } x \in \R
\]
for some $\epsilon \in [\frac{l}{d}, \frac{l+1}{d})$ then
\[
\cdf_{\Shi_\epsilon p} (x) - \frac{l}{d} \le \cdf_{q} (x) \le \cdf_{\Shi_{-\epsilon} p} (x) + \frac{l}{d} \quad \text{for all } x \in \R.
\]
\end{proof}

In the proof of Theorem \ref{thm:main2}, we obtained the corresponding result for multiplicative convolution.
\begin{theorem} \label{thm:monotonicity-finite-free-multiplicative}
Let $p_1, p_2 \in \calP_d(\R)$ and $r \in \calP_d(\R_{\ge 0})$. Then,
$d_K (p_1 \boxtimes_d r, p_2 \boxtimes_d r) \le d_K (p_1, p_2)$.
\end{theorem}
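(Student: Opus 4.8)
The plan is to observe that this statement is the multiplicative twin of Theorem \ref{thm:main2}(1), and that essentially all of the work has already been carried out inside Lemma \ref{lem:pan}. That lemma was deliberately stated with both an additive and a multiplicative conclusion: whenever $\cdf_q(x) \le \cdf_p(x) + l/d$ holds for all $x \in \R$, one obtains not only $\cdf_{q \boxplus_d r}(x) \le \cdf_{p \boxplus_d r}(x) + l/d$ but also $\cdf_{q \boxtimes_d r}(x) \le \cdf_{p \boxtimes_d r}(x) + l/d$ for every $r \in \calP_d(\R_{\ge 0})$. So my proof would simply rerun the argument used for Theorem \ref{thm:main2}(1) verbatim, replacing $\boxplus_d$ by $\boxtimes_d$ and the hypothesis $r \in \calP_d(\R)$ by $r \in \calP_d(\R_{\ge 0})$.

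Concretely, first I would record that, since $\cdf_{p_1}$ and $\cdf_{p_2}$ take values in $\{0, 1/d, \dots, 1\}$, the Kolmogorov distance $d_K(p_1, p_2)$ equals $l/d$ for some integer $0 \le l \le d$, and that by the definition of $d_K$ this gives the two-sided pointwise bound $\cdf_{p_1}(x) - l/d \le \cdf_{p_2}(x) \le \cdf_{p_1}(x) + l/d$ for all $x \in \R$. Next I would feed each of these two one-sided inequalities into the multiplicative conclusion of Lemma \ref{lem:pan}: applying it once with $(p,q) = (p_1, p_2)$ and once with $(p,q) = (p_2, p_1)$ yields
\[
\cdf_{p_1 \boxtimes_d r}(x) - \frac{l}{d} \le \cdf_{p_2 \boxtimes_d r}(x) \le \cdf_{p_1 \boxtimes_d r}(x) + \frac{l}{d}
\]
for all $x \in \R$. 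By the definition of the Kolmogorov distance this is exactly $d_K(p_1 \boxtimes_d r, p_2 \boxtimes_d r) \le l/d = d_K(p_1, p_2)$, which is the claim.

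I do not expect any genuine obstacle here, since the content lives entirely in Lemma \ref{lem:pan}, whose multiplicative half is powered in turn by the $\boxtimes_d$-parts of Propositions \ref{prop:InterlacingPreserving} and \ref{prop:OrderPreserving} (preservation of interlacing and of the partial order under multiplication by a nonnegative-rooted polynomial). The only point requiring a modicum of care is that the Kolmogorov bound is two-sided, so Lemma \ref{lem:pan} must be invoked symmetrically in both directions; and, in contrast to the additive setting, there is no companion Lévy-distance statement, reflecting the failure of the Lévy analogue for $\boxtimes$ noted after Proposition \ref{prop:monotonicity_distance_multiplicative_kolmogorov}.
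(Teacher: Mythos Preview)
Your proposal is correct and follows exactly the approach the paper takes: the paper proves this inside the proof of Theorem~\ref{thm:main2} by noting that $d_K(p_1,p_2)=l/d$ yields the two-sided bound $\cdf_{p_1}(x)-l/d\le\cdf_{p_2}(x)\le\cdf_{p_1}(x)+l/d$, and then invokes the multiplicative conclusion of Lemma~\ref{lem:pan} (valid for $r\in\calP_d(\R_{\ge 0})$) in both directions to obtain the same two-sided bound for the CDFs of $p_1\boxtimes_d r$ and $p_2\boxtimes_d r$. Your remark that there is no Lévy-distance companion is also in line with the paper's treatment.
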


\subsection{Convergence of finite free convolutions} \label{sec:approx_convolutions}

This subsection considers sequences of polynomials $p_d \in \calP_d (\R)$ whose empirical root distributions $\meas{p_d}$ converge to a probability measure $\mu \in \calM (\R)$ as $d \to \infty$.
We first confirm that any $\mu \in \calM(K)$ (for $K = \R, \R_{\ge 0}$, or $[a,b]$) can be approximated in Kolmogorov distance by the empirical measure of a polynomial $p_d \in \calP_d(K)$.

\begin{lemma} \label{lem:convergentSequenceOfPolynomials}
Let $K$ be $\R$, $\R_{\ge 0}$, or $[a,b]$ for $a,b \in \R$ with $a<b$.
For any $\mu \in \calM (K)$ and $d \in \N$, there exists a polynomial $p_d \in \calP_d (K)$ such that
$d_K (\meas{p_d}, \mu) \le 1/d$.
\end{lemma}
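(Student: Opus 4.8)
The plan is to build $p_d$ by hand from the quantiles of $\mu$, so that its empirical CDF is a deterministic ``staircase'' that hugs $\cdf_\mu$ uniformly. First I would unpack Definition \ref{def:Distance.on.measures}: the Kolmogorov distance is exactly the sup-norm of the difference of CDFs, $d_K(\meas{p_d},\mu)=\sup_{x\in\R}\abs{\cdf_{\meas{p_d}}(x)-\cdf_\mu(x)}$, so the task reduces to producing $d$ roots $\lambda_1\le\dots\le\lambda_d$ lying in $K$ whose counting CDF stays within $1/d$ of $\cdf_\mu$ everywhere.

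To this end I would introduce the quantile (generalized inverse) function $Q(u):=\inf\{x\in\R : \cdf_\mu(x)\ge u\}$ for $u\in(0,1)$ and set $\lambda_i := Q\bigl(\tfrac{2i-1}{2d}\bigr)$ for $i=1,\dots,d$, with $p_d(x):=\prod_{i=1}^d(x-\lambda_i)$. The crucial input is the Galois-type equivalence $Q(u)\le x \iff u\le \cdf_\mu(x)$, which holds because $\cdf_\mu$ is non-decreasing and right-continuous (the infimum defining $Q$ is attained). Writing $u_i:=\tfrac{2i-1}{2d}$, this equivalence turns the root count into a count of quantile levels: for every $x$,
\[
\#\{i : \lambda_i\le x\} = \#\{i : u_i\le \cdf_\mu(x)\} = \bigl\lfloor d\,\cdf_\mu(x)+\tfrac12\bigr\rfloor,
\]
where the last equality uses $0\le\cdf_\mu(x)\le1$ to see that the floor never exceeds $d$ nor drops below $0$. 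Dividing by $d$ gives $\cdf_{\meas{p_d}}(x)=\tfrac1d\lfloor d\,\cdf_\mu(x)+\tfrac12\rfloor$, and since $\abs{\lfloor y+\tfrac12\rfloor-y}\le\tfrac12$ for every real $y$, we obtain $\abs{\cdf_{\meas{p_d}}(x)-\cdf_\mu(x)}\le\tfrac1{2d}\le\tfrac1d$ for all $x$, as required (in fact with the sharper constant $1/(2d)$).

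The step I expect to be the main obstacle is the membership $\lambda_i\in K$ together with the edge behaviour of $Q$. For $K=\R$ there is nothing to check. For $K=\R_{\ge0}$ the support condition forces $\cdf_\mu(x)=0$ for $x<0$, so $Q(u)\ge0$ whenever $u>0$, and each $u_i>0$; for $K=[a,b]$ one checks $a\le Q(u)\le b$ for $u\in(0,1)$ using $\cdf_\mu(x)=0$ on $x<a$ and $\cdf_\mu(b)=1$. The genuinely delicate point is finiteness of the extreme roots: for a general $\mu\in\calM(\R)$ with unbounded support one has $\cdf_\mu(x)<1$ for all $x$, so $Q(1)=+\infty$, which is precisely why the symmetric choice $u_i=\tfrac{2i-1}{2d}$ — keeping every level strictly inside $(0,1)$ — is used in place of the naive $u_i=i/d$; for $u\in(0,1)$ the limits $\cdf_\mu(x)\to0$ and $\cdf_\mu(x)\to1$ guarantee $Q(u)$ is finite, completing the construction.
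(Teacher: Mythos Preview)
Your proof is correct and follows essentially the same quantile-based construction as the paper; the only difference is that you place the roots at the mid-quantile levels $u_i=\tfrac{2i-1}{2d}$, whereas the paper uses the levels $k/d$ for $k=1,\dots,d-1$ and repeats the last root to avoid evaluating $Q$ at $1$. Your variant has the pleasant side effect of delivering the sharper bound $d_K(\meas{p_d},\mu)\le 1/(2d)$ and a cleaner treatment of the edge cases, but the underlying idea is the same.
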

\begin{proof}
For $k =1, \dots, d-1$, define the quantile $\lambda_k = \inf\{x \in K \mid \cdf_\mu(x) \ge k/d \}$. 
Define $p_d(x) = (x-\lambda_{d-1})\prod_{k=1}^{d-1} (x-\lambda_k)$ and then we get $d_K(\meas{p_d}, \mu) \le 1/d$ and $p_d \in \calP_d (K)$ by the construction.
\end{proof}
Therefore, for any $\mu \in \calM (\R)$, we can construct a sequence of polynomials $(p_d)_{d=1}^\infty$ such that $\meas{p_d} \to \mu$ in Kolmogorov distance (and thus weakly).
In particular, if $d_K(\meas{p_d}, \mu) \to 0$ then $\meas{p_d}(\{x\}) \to \mu(\{x\})$ for every $x \in \R$ by Equation \eqref{eq:reflected_cdf_relation}.

\begin{proof}[Proof of Theorem \ref{thm:main3}]
Let $\epsilon>0$. Since the set of discontinuity points of $\cdf_\mu$ and $\cdf_\nu$ is countable, we can choose $a>0$ such that $\pm a$ are continuity points for both $\cdf_\mu$ and $\cdf_\nu$, and additionally $\max\{\cdf_{\mu}(-a), \cdf_\nu(-a), 1-\cdf_{\mu}(a), 1-\cdf_{\nu}(a)\} < \epsilon$.
This ensures $d_K(\mu, (\mu)_a)<\epsilon$ and $d_K(\nu, (\nu)_a) <\epsilon$, and hence
\begin{equation*}
d_K(\mu \boxplus \nu, (\mu)_a \boxplus (\nu)_a) < 2\epsilon
\end{equation*}
by Proposition \ref{prop:monotonicity_distance_additive}.

Since $\meas{p_d} \weakto \mu$ and $\meas{q_d} \weakto \nu$, and $\pm a$ are continuity points of $\cdf_\mu$ and $\cdf_\nu$, $\cdf_{p_d}(\pm a) \to \cdf_\mu(\pm a)$ and $\cdf_{q_d}(\pm a) \to \cdf_\nu(\pm a)$ as $d\to\infty$.
This implies $d_K({p_d}, ({p_d})_a) < \epsilon$ and $d_K({q_d}, ({q_d})_a) < \epsilon$, and hence
\begin{equation*}
d_K(p_d \boxplus_d q_d, (p_d)_a \boxplus_d (q_d)_a) < 2\epsilon
\end{equation*}
by Theorem \ref{thm:main2} for $d \in \N$ large enough.

The measures $(\mu)_a$ and $(\nu)_a$ have the supports contained in $[-a,a]$.
Also $\meas{(p_d)_a} \weakto (\mu)_a$ and $\meas{(q_d)_a} \weakto (\nu)_a$.
\begin{equation*}
d_L(\meas{(p_d)_a \boxplus_d (q_d)_a}, (\mu)_a \boxplus (\nu)_a) < \epsilon
\end{equation*}
by Proposition \ref{prop:convergence-finite-free-additive} for $d \in \N$ large enough.

Therefore, combining the inequalities above, we have
\[
d_L(\meas{p_d \boxplus_d q_d}, \mu \boxplus \nu) \le 5\epsilon
\]
for $d\in\N$ large enough.
Since $\epsilon>0$ was arbitrary, $d_L(\meas{p_d \boxplus_d q_d}, \mu \boxplus \nu) \to 0$, implying weak convergence.

For convergence in Kolmogorov distance, assume $d_K(\meas{p_d}, \mu) \to 0$ and $d_K(\meas{q_d}, \nu) \to 0$ as $d\to\infty$.
It is sufficient to prove the convergence
\[
\cdf_{p_d \boxplus_d q_d} (\gamma) \to \cdf_{\mu \boxplus \nu}(\gamma)
\]
for every non-continuous point $\gamma$ of $\mu \boxplus \nu$, which means $(\mu \boxplus \nu) (\{\gamma\})>0$.
By Proposition \ref{prop:atom_regularity_additive}, there exist $\alpha, \beta \in \R$ such that $\gamma = \alpha + \beta$ and
$(\mu \boxplus \nu) (\{\gamma\}) = \mu(\{\alpha\}) + \nu(\{\beta\}) - 1$.
Since, the empirical root distributions of $p_d$ and $q_d$ converge $\mu$ and $\nu$ in Kolmogorov distance, respectively, we have
\[
\meas{p_d}(\{\alpha\}) \to \mu(\{\alpha\}) \quad \text{and} \quad \meas{q_d}(\{\beta\}) \to \nu(\{\beta\}).
\]
Hence, for any $\epsilon \in (0, (\mu \boxplus \nu) (\{\gamma\})/2)$, there exists $d_0 \in \N$ such that $p_d$ (resp. $q_d$) has a root at $\alpha$ (resp. $\beta$) with multiplicity more than $d(\mu(\{\alpha\})- \epsilon)$ (resp. $d(\nu(\{\beta\})- \epsilon)$) for all $d> d_0$.
Thus, Proposition \ref{prop:boxplus_atom} implies that $p_d \boxplus_d q_d$ has a root $\gamma = \alpha + \beta$ with multiplicity more than $d(\mu(\{\alpha\}) + \nu(\{\beta\}) -1 - 2\epsilon) = d((\mu \boxplus \nu) (\{\gamma\}) -2\epsilon)>0$.
That is, $(\alpha, \beta, \gamma)$ is the atom triplet of $(p_d, q_d, p_d \boxplus_d q_d)$ for $d \in \N$ large enough.
Therefore,
\[
\cdf_{p_d \boxplus_d q_d} (\gamma) = \cdf_{p_d}(\alpha) + \cdf_{q_d}(\beta) -1 \to \cdf_\mu(\alpha) + \cdf_\nu(\beta) - 1 = \cdf_{\mu \boxplus \nu}(\gamma)
\]
as $d \to \infty$ by Propositions \ref{prop:cdf_at_atom_additive} and \ref{prop:atoms-finite-free}.
\end{proof}

A similar argument yields the multiplicative version, providing an alternative proof for Proposition \ref{prop:convergence-finite-free-multiplicative} and extending it to Kolmogorov distance.
\begin{theorem} \label{thm:convergence_multiplicative}
Let $(p_d)_{d\in\N}$ and $(q_d)_{d\in\N}$ be sequences of polynomials with $p_d, q_d \in \calP_d (\R_{\ge 0})$. Assume $\meas{p_d} \weakto \mu$ and $\meas{q_d} \weakto \nu$ for $\mu, \nu \in \calM(\R_{\ge 0})$ as $d \to \infty$.
Then, $\meas{p_d \boxtimes_d q_d} \weakto \mu \boxtimes \nu$.
Moreover, if $d_K(\meas{p_d}, \mu) \to 0$ and $d_K(\meas{q_d}, \nu) \to 0$, then $d_K(\meas{p_d \boxtimes_d q_d}, \mu \boxtimes \nu) \to 0$.
\end{theorem}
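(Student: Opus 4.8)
The plan is to transplant the architecture of the proof of Theorem~\ref{thm:main3} almost verbatim, replacing each additive ingredient by its multiplicative counterpart and treating the behaviour at the origin separately, since it has no additive analogue. Because $\mu,\nu\in\calM(\R_{\ge 0})$ already have support bounded below by $0$, I would replace the two-sided truncation $(\cdot)_a$ used in the additive argument by the one-sided cut-up $\cdot|^a$, which confines the support to the compact interval $[0,a]$ while keeping everything inside $\calM(\R_{\ge 0})$ (resp. $\calP_d(\R_{\ge 0})$), so that the compactly supported convergence result is applicable.

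For the weak convergence, fix $\epsilon>0$ and choose $a>0$ that is a continuity point of both $\cdf_\mu$ and $\cdf_\nu$ with $1-\cdf_\mu(a)<\epsilon$ and $1-\cdf_\nu(a)<\epsilon$; then $d_K(\mu,\mu|^a)<\epsilon$ and $d_K(\nu,\nu|^a)<\epsilon$, so Proposition~\ref{prop:monotonicity_distance_multiplicative_kolmogorov} gives $d_K(\mu\boxtimes\nu,\mu|^a\boxtimes\nu|^a)<2\epsilon$. On the finite side, weak convergence together with continuity of $\cdf_\mu,\cdf_\nu$ at $a$ yields $\cdf_{p_d}(a)\to\cdf_\mu(a)$ and $\cdf_{q_d}(a)\to\cdf_\nu(a)$, hence $d_K(p_d,p_d|^a)<\epsilon$ and $d_K(q_d,q_d|^a)<\epsilon$ for large $d$. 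Applying Theorem~\ref{thm:monotonicity-finite-free-multiplicative} twice through the triangle inequality (first with the fixed factor $q_d\in\calP_d(\R_{\ge 0})$, then with the fixed factor $p_d|^a\in\calP_d(\R_{\ge 0})$) bounds $d_K(p_d\boxtimes_d q_d,\,p_d|^a\boxtimes_d q_d|^a)<2\epsilon$. Since $\mu|^a,\nu|^a$ have compact support and $\meas{p_d|^a}\weakto\mu|^a$, $\meas{q_d|^a}\weakto\nu|^a$, Proposition~\ref{prop:convergence-finite-free-multiplicative} applies and gives $d_L(\meas{p_d|^a\boxtimes_d q_d|^a},\mu|^a\boxtimes\nu|^a)<\epsilon$ for large $d$. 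Combining these estimates via $d_L\le d_K$ and the triangle inequality for $d_L$ yields $d_L(\meas{p_d\boxtimes_d q_d},\mu\boxtimes\nu)\le 5\epsilon$, whence weak convergence as $\epsilon$ was arbitrary.

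For the strengthening to Kolmogorov distance, assume $d_K(\meas{p_d},\mu)\to0$ and $d_K(\meas{q_d},\nu)\to0$; it then suffices to prove $\cdf_{p_d\boxtimes_d q_d}(\gamma)\to\cdf_{\mu\boxtimes\nu}(\gamma)$ at every atom $\gamma$ of $\mu\boxtimes\nu$. For $\gamma\neq0$, Proposition~\ref{prop:atom_regularity_multiplicative} supplies atoms $\alpha,\beta>0$ with $\gamma=\alpha\beta$ and $\mu(\{\alpha\})+\nu(\{\beta\})>1$; Kolmogorov convergence gives $\meas{p_d}(\{\alpha\})\to\mu(\{\alpha\})$ and $\meas{q_d}(\{\beta\})\to\nu(\{\beta\})$ via Equation~\eqref{eq:reflected_cdf_relation}, so for large $d$ the multiplicities of $\alpha$ in $p_d$ and of $\beta$ in $q_d$ sum to more than $d$ and $(\alpha,\beta,\gamma)$ becomes an atom triplet by Proposition~\ref{prop:boxtimes-atom}. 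Equation~\eqref{eq:atomsMultiplicative} (Proposition~\ref{prop:atoms-finite-free-multiplicative}) then gives $\cdf_{p_d\boxtimes_d q_d}(\gamma)=\cdf_{p_d}(\alpha)+\cdf_{q_d}(\beta)-1\to\cdf_\mu(\alpha)+\cdf_\nu(\beta)-1=\cdf_{\mu\boxtimes\nu}(\gamma)$, the last equality by Proposition~\ref{prop:cdf_at_atom_multiplicative}, exactly as in the additive case.

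The step I expect to be the genuine obstacle, and where the multiplicative theory parts ways with the additive one, is the atom at $\gamma=0$. Here the atom-triplet mechanism is replaced by a $\max$-formula: since all supports lie in $\R_{\ge 0}$, one has $\cdf_{\mu\boxtimes\nu}(0)=(\mu\boxtimes\nu)(\{0\})=\max\{\cdf_\mu(0),\cdf_\nu(0)\}$ by Proposition~\ref{prop:atom_regularity_multiplicative}, using $\mu(\{0\})=\cdf_\mu(0)$; and on the finite side $p_d\boxtimes_d q_d$ has a root at $0$ of multiplicity $\max(m^{p_d}_0,m^{q_d}_0)$, so that $\cdf_{p_d\boxtimes_d q_d}(0)=\max\{\cdf_{p_d}(0),\cdf_{q_d}(0)\}$. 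Since Kolmogorov convergence forces $\cdf_{p_d}(0)\to\cdf_\mu(0)$ and $\cdf_{q_d}(0)\to\cdf_\nu(0)$, continuity of $\max$ gives $\cdf_{p_d\boxtimes_d q_d}(0)\to\cdf_{\mu\boxtimes\nu}(0)$, which disposes of the last atom and yields $d_K(\meas{p_d\boxtimes_d q_d},\mu\boxtimes\nu)\to0$.
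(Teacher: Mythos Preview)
Your proposal is correct and follows exactly the route the paper intends: it merely states that ``a similar argument yields the multiplicative version,'' and you have supplied precisely that argument, replacing $(\cdot)_a$ by $\cdot|^a$, the additive monotonicity inputs by Proposition~\ref{prop:monotonicity_distance_multiplicative_kolmogorov} and Theorem~\ref{thm:monotonicity-finite-free-multiplicative}, and the additive atom identities by Propositions~\ref{prop:atom_regularity_multiplicative}, \ref{prop:cdf_at_atom_multiplicative}, \ref{prop:boxtimes-atom}, and \ref{prop:atoms-finite-free-multiplicative}. Your separate treatment of the atom at $\gamma=0$ via the $\max$-formula is the one genuinely new step relative to the additive template, and it is handled correctly (note that the exact equality $m_0^{p\boxtimes_d q}=\max(m_0^p,m_0^q)$ you use does hold here because for $p,q\in\calP_d(\R_{\ge 0})$ all $\coef_k$ with $k\le d-m_0$ are strictly positive).
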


The same method also extends to the case where one measure is not restricted to $\R_{\ge 0}$, provided the other has compact support in $\R_{\ge 0}$.
\begin{theorem} \label{thm:convergence_mixed}
Given $\mu \in \calM(\R)$ and $\nu \in \calM([0, a])$ for some $a>0$.
If $(p_d)_{d\in\N}$ with $p_d \in \calP_d(\R)$ satisfies $\meas{p_d} \weakto \mu$, and $(q_d)_{d\in\N}$ with $q_d \in \calP_d([0,a])$ satisfies $\meas{q_d} \weakto \nu$, then
$\meas{p_d \boxtimes_d q_d} \weakto \mu \boxtimes \nu$.
\end{theorem}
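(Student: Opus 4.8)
The plan is to adapt the truncation argument from the proof of Theorem \ref{thm:main3} to the multiplicative setting, exploiting the fact that here only $\mu$ may fail to have compact support: the measure $\nu$ is already supported on $[0,a]$, and the polynomials $q_d$ lie in $\calP_d([0,a]) \subset \calP_d(\R_{\ge 0})$. Thus I only need to truncate the $\mu$-side and transport the resulting approximation through $\boxtimes$ and $\boxtimes_d$ while holding the non-negative factor fixed. The three-term telescoping estimate will then combine a free-side approximation, a finite-side approximation, and the compactly supported convergence of Proposition \ref{prop:convergence-finite-free-multiplicative}.

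Fix $\epsilon>0$. Since the discontinuity set of $\cdf_\mu$ is countable, I would choose $M>0$ so that $\pm M$ are continuity points of $\cdf_\mu$ and $\max\{\cdf_\mu(-M),\,1-\cdf_\mu(M)\}<\epsilon$. Writing $(\mu)_M=(\mu|^M)|_{-M}\in\calM([-M,M])$, the direct CDF comparison gives $d_K(\mu,(\mu)_M)<\epsilon$. On the free side, since $\nu\in\calM(\R_{\ge 0})$, the one-sided monotonicity \eqref{eq:monotonicity_multiplicative_one_sided} yields $d_K(\mu\boxtimes\nu,(\mu)_M\boxtimes\nu)\le d_K(\mu,(\mu)_M)<\epsilon$. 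On the finite side, weak convergence $\meas{p_d}\weakto\mu$ together with $\pm M$ being continuity points gives $\cdf_{p_d}(\pm M)\to\cdf_\mu(\pm M)$, so $d_K(p_d,(p_d)_M)<\epsilon$ for all large $d$; since $q_d\in\calP_d(\R_{\ge 0})$, Theorem \ref{thm:monotonicity-finite-free-multiplicative} then gives $d_K(p_d\boxtimes_d q_d,(p_d)_M\boxtimes_d q_d)<\epsilon$ for all large $d$.

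It remains to treat the now fully compactly supported approximants. Both $(\mu)_M$ and $\nu$ are supported on compact subsets of $\R$ with $\nu\in\calM(\R_{\ge 0})$; moreover $(p_d)_M\in\calP_d([-M,M])$ and $q_d\in\calP_d([0,a])$, while $\meas{(p_d)_M}\weakto(\mu)_M$ (by the stated cut-up/cut-down weak convergence, applied first at $M$ and then at $-M$) and $\meas{q_d}\weakto\nu$. Hence Proposition \ref{prop:convergence-finite-free-multiplicative} applies and gives $\meas{(p_d)_M\boxtimes_d q_d}\weakto(\mu)_M\boxtimes\nu$, so $d_L(\meas{(p_d)_M\boxtimes_d q_d},(\mu)_M\boxtimes\nu)<\epsilon$ for large $d$. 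Combining the three estimates through the triangle inequality for $d_L$ and the bound $d_L\le d_K$, I obtain $d_L(\meas{p_d\boxtimes_d q_d},\mu\boxtimes\nu)<3\epsilon$ for all large $d$; since $\epsilon$ is arbitrary, this gives the claimed weak convergence.

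The main point — the only place where something genuinely different from the additive argument of Theorem \ref{thm:main3} is needed — is that for $\boxtimes$ one has monotonicity of the Kolmogorov distance \emph{only} when the fixed factor is non-negative (both the free inequality \eqref{eq:monotonicity_multiplicative_one_sided} and its finite analogue Theorem \ref{thm:monotonicity-finite-free-multiplicative} require this). Since it is precisely the non-negative factor ($\nu$, respectively $q_d$) that is held fixed here, these one-sided bounds suffice and no truncation of the $\nu$-side is required. I would not expect to upgrade the conclusion to convergence in Kolmogorov distance along this route, because the CDF-at-atom identity (Proposition \ref{prop:cdf_at_atom_multiplicative}) is available only when both measures are supported on $\R_{\ge 0}$, which fails for general $\mu\in\calM(\R)$.
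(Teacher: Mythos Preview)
Your proof is correct and follows essentially the same route the paper indicates: the paper does not spell out a proof for Theorem~\ref{thm:convergence_mixed} but remarks that ``the same method also extends'' from the proof of Theorem~\ref{thm:main3}, and your truncation-only-on-$\mu$ argument, combined with the one-sided Kolmogorov monotonicity (\eqref{eq:monotonicity_multiplicative_one_sided} on the free side, Theorem~\ref{thm:monotonicity-finite-free-multiplicative} on the finite side) and the compactly supported convergence of Proposition~\ref{prop:convergence-finite-free-multiplicative}, is precisely that method. Your closing observation about the obstruction to a Kolmogorov-distance upgrade also matches the paper's own caveat following the theorem.
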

The Kolmogorov distance result might also hold but would require ensuring that Proposition \ref{prop:monotonicity_distance_multiplicative_kolmogorov} or Theorem \ref{thm:monotonicity-finite-free-multiplicative} applies appropriately when $p_1, p_2$ are not restricted to $\R_{\ge 0}$.

\section*{Final remarks}

This paper has advanced the understanding of finite free convolutions by extending the convergence of $\boxplus_d$ and $\boxtimes_d$ to their respective infinite counterparts, $\boxplus$ and $\boxtimes$, notably without requiring prior compactness assumptions on the underlying measures.
A significant outcome is the demonstration that improved initial polynomial approximations, specifically in the Kolmogorov distance, translate directly to enhanced convergence for the resulting convolutions.
Despite these advances, the most general scenario for the finite free multiplicative convolution---involving measures $\mu \in \calM(\R)$ and $\nu \in \calM(\R_{\ge 0})$---remains open, with questions persisting about both weak convergence and the possibility of strengthening this to Kolmogorov distance.

More broadly, this research has sought to illuminate the fruitful interplay between finite free probability (often approached via the geometry of polynomials) and the established theory of (infinite) free probability.
By leveraging perspectives from one domain to address problems in the other, we have shown that solutions can often be developed through analogous methodologies.
This synergy is not unidirectional; insights from polynomial theory also provide valuable feedback to free probability.
A case in point is the role of Propositions \ref{prop:cdf_at_atom_additive} and \ref{prop:cdf_at_atom_multiplicative} in the proof of Theorem \ref{thm:main1}. These propositions, which are critical for understanding atomic structure, build upon Kostov and Shapiro's work on the Schur--Szegö composition \cite{KS} and translate findings from polynomial theory into the language of free probability---a connection that, despite its directness, appears to be a novel observation in this context.
It is hoped that this paper will stimulate further collaborative advancements at the interface of these two fields.

\section*{Acknowledgements}

This research was supported by the JSPS Open Partnership Joint Research Projects Grant (No. JPJSBP120209921) and JSPS Research Fellowship for Young Scientists PD (KAKENHI Grant No. 24KJ1318).
Significant portions of this research were conducted during the author's stay to CIMAT.
The author expresses gratitude to Octavio Arizmendi for valuable discussions during the initial stages of this work.


\appendix

\section{Proof of Proposition \ref{prop:boxplus_simple}} \label{app:proof_prop_boxplus_simple}

The argument relies on the continuity of polynomial roots with respect to coefficients. Specifically:
\begin{enumerate}
\item If $p \in \calP_d (\R)$ has only simple roots, then $p(x)+\epsilon$ remains in $\calP_d (\R)$ (i.e., retains $d$ real roots, which will also be simple) for all sufficiently small $\epsilon \in \R$.
\item If $p \in \calP_d (\R)$ and $\alpha$ is a root of $p$ with multiplicity $2$, then there exists $\epsilon_0 >0$ such that
$p + \epsilon \notin \calP_d (\R)$ for all $0<\epsilon< \epsilon_0$ or
$p - \epsilon \notin \calP_d (\R)$ for all $0<\epsilon< \epsilon_0$.
\end{enumerate}

\begin{lemma} \label{lem:appendix_basis}
If $p(x) = \prod_{i=1}^d (x-\alpha_i)$ where $\alpha_i \in \C$ are distinct roots. Let $p_j (x) := p(x)/(x-\alpha_j)$ for $j \in \{1, \dots, d\}$. Then, the set of $d+1$ polynomials $\{p(x), p_1(x), \dots, p_d(x)\}$ forms a basis for the vector space $\C_d [x]$.
\end{lemma}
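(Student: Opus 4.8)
The plan is to reduce the statement to a linear-independence check via a dimension count. Since $\C_d[x]$ is the space of polynomials of degree at most $d$, it has dimension $d+1$, which is exactly the number of polynomials in the proposed set $\{p, p_1, \dots, p_d\}$. Consequently, it suffices to prove that these $d+1$ polynomials are linearly independent over $\C$; a linearly independent family whose cardinality equals the dimension of the ambient space is automatically a basis, so no separate spanning argument is needed.

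To establish independence, I would start from an arbitrary vanishing combination
\[
c_0\, p(x) + \sum_{j=1}^d c_j\, p_j(x) = 0,
\]
with $c_0, c_1, \dots, c_d \in \C$, and evaluate it successively at the roots $\alpha_1, \dots, \alpha_d$. The key structural observation is that $p(\alpha_k) = 0$ for every $k$, while $p_j(\alpha_k) = 0$ whenever $j \neq k$ (since $p_j = p/(x-\alpha_j)$ still carries the factor $x - \alpha_k$ in that case), and $p_k(\alpha_k) = \prod_{i \neq k}(\alpha_k - \alpha_i)$. Evaluating the combination at $x = \alpha_k$ therefore collapses every term except the $k$-th, leaving $c_k\, p_k(\alpha_k) = 0$.

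The crux of the argument---and the one place where the hypothesis that the $\alpha_i$ are \emph{distinct} is indispensable---is that $p_k(\alpha_k) = \prod_{i \neq k}(\alpha_k - \alpha_i) \neq 0$, which forces $c_k = 0$ for each $k = 1, \dots, d$. Once all the $c_k$ with $k \geq 1$ vanish, the combination reduces to $c_0\, p(x) = 0$, and since $p$ is monic of degree $d$ and hence nonzero, $c_0 = 0$ as well. This proves linear independence and, by the dimension count above, that $\{p, p_1, \dots, p_d\}$ is a basis of $\C_d[x]$. I do not anticipate any substantial obstacle: the whole proof is a Lagrange-interpolation-style evaluation trick, and the only subtlety worth flagging explicitly is the use of root-distinctness to guarantee $p_k(\alpha_k) \neq 0$.
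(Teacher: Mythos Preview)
Your proposal is correct and essentially identical to the paper's own proof: both reduce to linear independence via the dimension count, evaluate a vanishing combination at each root $\alpha_k$ to isolate $c_k p_k(\alpha_k)$, use distinctness of the $\alpha_i$ to conclude $c_k=0$, and then deduce $c_0=0$ from $c_0 p(x)=0$.
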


\begin{proof}
Since $\dim(\C_d[x]) = d+1$, it suffices to prove linear independence. Assume
$c_0 p(x) + \sum_{j=1}^d c_j p_j (x) = 0$ for some $c_j \in \C$.
Substituting $x=\alpha_k$ for any $k \in \{1, \dots, d\}$:
Since $p(\alpha_k)=0$ and $p_j(\alpha_k)=0$ for $j \ne k$, the equation becomes $c_k p_k(\alpha_k) = 0$.
As $\alpha_k$ are distinct, $p_k(\alpha_k) = \prod_{i \ne k}(\alpha_k - \alpha_i) \ne 0$. Thus, $c_k=0$ for all $k=1, \dots, d$.
The equation then reduces to $c_0 p(x) = 0$. Since $p(x)$ is not identically zero, $c_0=0$.
Hence, all coefficients are zero, proving linear independence.
\end{proof}

\begin{proof}[Proof of Proposition \ref{prop:boxplus_simple}]
The proof is by contradiction.
Assume $p \boxplus_d q$ has a nontrivial root at $0$ by using the shift.
After derivation, we may also assume the multiplicity is $2$.
We will do the case-by-case arguments.

First, suppose that every root of $p$ is simple.
Then, for small enough $\epsilon \in \R$, one has $p + \epsilon \in \calP_d (\R)$.
Now, take $\epsilon_0 \in \R$ such that $p \boxplus_d q + \epsilon_0 \notin \calP_d (\R)$, but $(p+\epsilon_0) \boxplus_d q = p \boxplus_d q + \epsilon_0$ by Proposition \ref{prop:preserve}.
This is a contradiction.

Second, let us assume $p$ has a root $\alpha$ with multiplicity $m^p_\alpha > 1$.
Then, define $p_\alpha (x) = p(x)/(x-\alpha)$ and consider $p^\delta (x) = p(x) - \delta p_\alpha (x)$ for $\delta \in \R$.
Note that $p^\delta$ has a root $\alpha$ with multiplicity $m^p_\alpha -1$ and instead a new root $\alpha + \delta$.
Then,
\[  p^\delta \boxplus_d q = p \boxplus_d q - \delta p_\alpha \boxplus_d q \in \calP_d (\R).\]
Note that $p_\alpha \in \calP_{d-1}(\R)$ and $U_\alpha (x) := p_\alpha \boxplus_d q = p_\alpha \boxplus_{d-1} \diff_{(d-1)|d} q \in \calP_{d-1} (\R)$. 
If $U_\alpha (0) \ne 0$, we can take a small $\delta \in \R$ and then
$p^\delta \boxplus_d q \notin \calP_d (\R)$, so a contradiction.
Hence, we should consider the case $U_\alpha (0) = 0$.
There are two cases, $U'_\alpha (0) = 0$ or $U'_\alpha (0) \ne 0$.

If $U_\alpha (0) = 0$ and $U'_\alpha (0) = 0$ then we may take $\delta \in \R$ as $\alpha + \delta$ is a root of $p^\delta$ with multiplicity $1$.
Besides, $p^\delta$ has a root at $0$ with multiplicity more than $1$ and the multiplicity at $\alpha$ decreases.

The case $U_\alpha (0) = 0$ and $U'_\alpha (0) \ne 0$ is a problem.
We will see that if there is another root $\beta \ne \alpha$ of $p$ such that $U_\beta (0) = 0$ and $U'_\beta (0) \ne 0$ then we may decrease the multiplicity of $p$ at $\alpha$ by the similar argument.
Let $p_{(\alpha, \beta)}(x) = \frac{p(x)}{ (x-\alpha)(x-\beta)}$.
Then
\[  p^{(\delta, \epsilon)}(x) = p(x) - \delta p_\alpha (x) - \epsilon p_\beta (x) + \delta \epsilon p_{(\alpha, \beta)}(x)\]
for $\delta, \epsilon \in \R$.
Note that $p_{(\alpha, \beta)} = \frac{p_\alpha - p_\beta}{\alpha - \beta}$ and hence $p_{(\alpha , \beta)} \boxplus_d q (0) = 0$.
If $\epsilon$ is taken as sufficiently small then $((p_\alpha - \epsilon p_{(\alpha, \beta)}) \boxplus_d q)'(0) = U'_\alpha (0) - \epsilon (p_{(\alpha, \beta)} \boxplus_d q)'(0) \ne 0$.

Also, setting $\delta = - \epsilon \frac{U'_\beta (0)}{U'_\alpha (0) - \epsilon (p_{(\alpha, \beta)} \boxplus_d q)'(0)}$ implies
$(p^{(\delta, \epsilon)} \boxplus_d q)'(0) = 0$.
Hence, $p^{(\delta, \epsilon)}$ has a root at $0$ with multiplicity more than $1$ and the multiplicity of every root of $p^{(\delta, \epsilon)}$ decreases.

Finally, the worst case is as follows: $p$ is not simple, and for every root $\alpha$ with multiplicity $m^p_\alpha > 1$ we have $U_\alpha (0) = 0$ and $U'_\alpha (0) \ne 0$.
By the discussion above, such a root is only one, just call $\alpha$ with multiplicity $m^p_\alpha>1$ and the other roots $\{\alpha_i\}_{i=1}^{d-m^p_\alpha}$.
Also, for any other root $\alpha_i$ of $p$, we have $U_{\alpha_i}(0) = U'_{\alpha_i}(0) = 0$.
It is same in $q$ as well, so define the root $\beta$ with multiplicity $m^q_\beta > 1$ and the other roots $\{\beta_i\}_{i=1}^{d-m^q_\beta}$.
By lemma, a linear combination of $p$ and $\{p_{\alpha_i}\}_{i=1}^{d-m^p_\alpha}$ expresses $(x-\alpha)^d$ and hence $(x-\alpha)^d \boxplus_d q (x) = q (x-\alpha)$ is a linear combination of $p \boxplus_d q$ and $\{p_{\alpha_i} \boxplus_d q = U_{\alpha_i}\}_{i=1}^{d-m^p_\alpha}$.
Thus, $q(x-\alpha)$ has a root at $0$ with multiplicity more than $1$.
It means $\beta = - \alpha$.
Since $0$ is a non-trivial root of $p \boxplus_d q$, $m^p_\alpha + m^q_\beta -d <0$ by Proposition \ref{prop:boxplus_atom}.
By lemma again, a linear combination of $p$ and $\{p_{\alpha_i}\}_{i=1}^{d-m^p_\alpha}$ expresses $r(x) = (x-1-\alpha)^{m^q_\beta}(x-\alpha)^{d - m^q_\beta}$ and hence $r \boxplus_d q$ has a root at $0$ with multiplicity more than $1$. This is a contradiction by Proposition \ref{prop:boxplus_atom}.
\end{proof}




\vspace{6mm}
\begin{enumerate}
\item[]
    \hspace{-10mm} Katsunori Fujie\\
    Department of Mathematics, Kyoto University.\\
    Kitashirakawa, Oiwake-cho, Sakyo-ku, Kyoto, 606-8502, Japan.\\
    email: fujie.katsunori.42m@st.kyoto-u.ac.jp\\
    URL: \url{https://sites.google.com/view/katsunorifujie}
\end{enumerate}

\end{document}